\newtheorem{theorem}{Theorem}[section]
\newtheorem{proposition}[theorem]{Proposition}
\newtheorem{lemma}[theorem]{Lemma}
\newtheorem{example}[theorem]{Example}
\newtheorem{result}[theorem]{Result}
\newtheorem{rem}[theorem]{Remark}
\def\cE{\mathcal E}
\def\cF{\mathcal F}
\def\cQ{\mathcal Q}
\def\cX{\mathcal X}
\def\cY{\mathcal Y}
\def\K{\mathbb{K}}
\def\deg{\mbox{\rm deg}}
\def\div{\mbox{\rm div}}
\def\Div{\mbox{\rm div}}
\def\gg{\mathfrak{g}}
\newcommand{\aut}{\mbox{\rm Aut}}
\newcommand{\ha}{{\textstyle\frac{1}{2}}}
\newcommand{\thi}{\textstyle\frac{1}{3}}
\title{Large odd prime power order automorphism groups of algebraic curves in any characteristic}
\date{}
\author{G\'abor Korchm\'aros and Maria Montanucci}
\begin{document}
\maketitle

\vspace{0.5cm}\noindent {\em Keywords}:
Algebraic curves, algebraic function fields, positive characteristic, automorphism groups.
\vspace{0.2cm}\noindent

\vspace{0.5cm}\noindent {\em Subject classifications}:
\vspace{0.2cm}\noindent  14H37, 14H05.


\begin{abstract} Let $\cX$ be a (projective, geometrically irreducible, nonsingular) algebraic curve of genus $\gg\ge 2$ defined over an algebraically closed field $\K$ of odd characteristic $p\ge 0$, and let $\aut(\cX)$ be the group of all automorphisms of $\cX$ which fix $\K$ element-wise. For any a subgroup $G$ of $\aut(\cX)$ whose order is a power of an odd prime $d$ other than $p$, the bound proven by Zomorrodian  for Riemann surfaces is $|G|\leq 9(\gg-1)$ where the extremal case can only be obtained for $d=3$. We prove Zomorrodian's result for any $\K$. The essential part of our paper is devoted to
extremal $3$-Zomorrodian curves $\cX$. Two cases are distinguished according as the quotient curve $\cX/Z$ for a central subgroup $Z$ of $\aut(\cX)$ of order $3$ is either elliptic, or not.
For elliptic type extremal $3$-Zomorrodian curves $\cX$, we completely determine the two possibilities for the abstract structure of $G$ using deeper results on finite $3$-groups. We also show infinite families of extremal $3$-Zomorrodian curves for both types, of elliptic or non-elliptic. Our paper does not adapt methods from the theory of Riemann surfaces, nevertheless it sheds a new light on the connection between Riemann surfaces and their automorphism groups.
\end{abstract}
    \section{Introduction}
In this paper, $\cX$ stands for a (projective, geometrically irreducible, nonsingular algebraic) curve of genus $\gg\geq 2$ defined over an algebraically closed field $\mathbb{K}$ of any characteristic $p$, and, for an odd prime $d$ different from $p$, $G$ denotes a $d$-subgroup of $\aut(\cX)$, that is, a subgroup  whose order is a power of $d$.

If $\mathbb{K}$ is the complex field, then $\cX$ can be viewed as a Riemann surface, and for this case R. Zomorrodian  proved $|G|\le 9(\gg-1)$; see \cite{zr,zr1}. He also pointed out that the bound is sharp if and only if $\gg-1$ is a power of $d=3$ and $\gg\geq 10$. His approach, inspired by Harvey's work \cite{hw}, was based on the method of Fuchsian groups including Singerman's Theorem, and it was largely used later in the studies of the minimum genus problem and the maximum order problem for other types of automorphism groups; see the survey paper \cite{bcg}.

If $\mathbb{K}$ is any field, especially of positive characteristic, a completely different approach is necessary. However Zomorrodian's bound still holds, see Theorems \ref{proCdic112017} and \ref{lem13jan2018}. The most interesting case arises when the bound is sharp, that is $d=3$, $\gg=3^h+1$, and $|G|=3^{h+2}$ with $h\geq 1$. For a curve $\cX$ for which this occurs, we use the name ``extremal $3$-Zomorrodian curve''. Obviously, $G$ is then  a Sylow $3$-subgroup of $\aut(\cX)$.

In the present paper, extremal $3$-Zomorrodian curves are thoroughly investigated. Our approach relies on Group theory. We look inside the action of $G$ viewed as a permutation group on the points of an extremal $3$-Zomorrodian curve $\cX$, and try to extract as much as possible useful properties regarding both the geometry of $\cX$ and the structure of $G$. Doing so, deeper results on groups whose order is a prime power are helpful and can be used in the following way.

Since the center $Z(G)$ of $G$ is non-trivial,  $Z(G)$ contains some subgroup of order $3$. Such a subgroup $Z$  gives rise to a quotient curve $\bar{\cX}=\cX/Z$ so that the quotient group $\bar{G}=G/Z$ is a subgroup $\aut(\bar{\cX})$. From Proposition \ref{prop29luglio2018}, either $\bar{\cX}$ is elliptic with zero $j$-invariant, or the cover $\cX|\bar{\cX}$ is unramified, and  in the latter case $\bar{\cX}$ is also an extremal $3$-Zomorrodian curve (with Sylow $3$-subgroup $\bar{G})$. Therefore, we are led to work out the former case, that is, to carry out a detailed investigation of  ``elliptic type'' extremal $3$-Zomorrodian curves.

It turns out that the assumption for a group to be a Sylow $3$-subgroup $G$ of an elliptic type extremal $3$-Zomorrodian curve is quite restrictive; see Lemmas \ref{lemC29luglio2018} and \ref{keylemma}. In particular, either $|Z(G)|=3$ or $Z(G)$ is an elementary abelian group of order $9$; see Proposition \ref{prop29luglio2018}. In the former case  $G$ is of maximal nilpotency class, that is $\mathrm{cl}(G)=h+1$ where $|G|=3^{h+2}$; while in the latter case $\mathrm{cl}(G)=h$. Also, $G$ contains a subgroup of index $3$ which is either abelian, or minimal non-abelian; see Lemma \ref{keylemma}. These properties of $G$ together with classical results of Burnside and quite recent results obtained in \cite{Qu} are enough to determine completely the possibilities for $G$ in terms of generators and relations; see Theorems \ref{the25agosto} and \ref{the25agostoA}.

Another important issue is the existence of extremal $3$-Zomorrodian curves, especially of elliptic type. Analogously to what was proven for Riemann surfaces in \cite{zr1}, no extremal $3$-Zomorrodian curve of genus $\gg=4$ exists; see Lemma \ref{nog=4}. The only familiar example of a (non-elliptic type) extremal $3$-Zomorrodian curve is the plane Fermat curve $\cF_9$ of degree $9$ (and genus $28$) which has an automorphism group $G$ of order $243$ isomorphic to $(C_9\times C_9)\rtimes C_3$. Its quotient curve $\bar{\cX}=\cX/Z(G)$ is an elliptic type extremal $3$-Zomorrodian curve of genus $10$; see Example \ref{ex27Aluglio}.

 A complete, positive solution for the existence problem of elliptic type extremal $3$-Zomorrodian curves is given in Section \ref{ss29luglioA}. For every $h\geq 2$, we construct an elliptic type extremal $3$-Zomorrodian curve $\cX$ of genus $\gg(\cX)=3^h+1$ as a degree $3$ Kummer extension of an elliptic curve with vanishing $j$-invariant and prove that a Sylow $3$-subgroup $G$ of $\aut(\cX)$ has center $Z(G)\cong C_3\times C_3$; see Proposition \ref{pro6agosto2018}. Among the four quotient curves arising from the order $3$ subgroups of $Z(G)$, just one is elliptic while the others are (possible isomorphic) elliptic type extremal $3$-Zomorrodian curves of genus $3^{h-1}+1$; see Remark \ref{rem11agosto2018}. Actually, for $h\geq 4$, at least two of the latter three curves are not isomorphic as both possibilities (being of order $3$ or $9$) for the center of a Sylow $3$-subgroup do occur. Using the curve in the order $9$ case, this process repeats as far as $h\geq 5$ and provides at least two non isomorphic elliptic type extremal $3$-Zomorrodian curves of genus $3^{h-1}+1$. We illustrate how to obtain in this way several examples of genus $10, 28$ or $82$; see Examples in Sections \ref{ss29luglioA} and \ref{lowex}.

In Section \ref{ss29luglioB} we exhibit an infinite family of extremal $3$-Zomorrodian curves of non elliptic type.

Our final remark regards the case $d=p$ not considered in the paper. Let $S$ be a $p$-subgroup of $\cX$. If the $p$-rank $\gamma(\cX)$ of $\cX$ is positive then Nakajima's bound yields $|S|\leq 3(\gg(\cX)-1)$, \cite{N} see also \cite[Theorem 11.84]{HKT}, and this bound is attained by an infinite family of curves; see \cite{gkja}.
If $\gamma(\cX)=0$ then $G$ fixes a point of $\cX$, see \cite{gktrans} or \cite[Lemma 11.129]{HKT}, and Stichtenoth's bound gives $|S|\leq 4p/(p-1)^2 \gg$, \cite{stichtenoth1973II}; see also \cite[Theorem 11.78]{HKT}. This case is thoroughly  investigated in \cite{LM,MR}.

\section{Background and Preliminary Results}\label{sec2}
For a finite subgroup $G$ of $\aut(\cX)$, let $\bar \cX$ denote a non-singular model of $\K(\cX)^G$, that is,
a (projective non-singular geometrically irreducible) algebraic curve with function field $\K(\cX)^G$, where $\K(\cX)^G$ consists of all elements of $\K(\cX)$ fixed by every element in $G$. Usually, $\bar \cX$ is called the
quotient curve of $\cX$ by $G$ and denoted by $\cX/G$. The field extension $\K(\cX)|\K(\cX)^G$ is Galois of degree $|G|$.

Since our approach is mostly group theoretical, we prefer to mostly use notation and terminology from Group theory rather than from Function field theory.
\subsection{Background on automorphisms of algebraic curves}
Let $\Phi$ be the cover of $\cX|\bar{\cX}$ where $\bar{\cX}=\cX/G$. A point $P\in\cX$ is a ramification point of $G$ if the stabilizer $G_P$ of $P$ in $G$ is nontrivial; the ramification index $e_P$ is $|G_P|$; a point $\bar{Q}\in\bar{\cX}$ is a branch point of $G$ if there is a ramification point $P\in \cX$ such that $\Phi(P)=\bar{Q}$; the ramification (branch) locus of $G$ is the set of all ramification (branch) points. The $G$-orbit of $P\in \cX$ is the subset
$o=\{R\mid R=g(P),\, g\in G\}$ of $\cX$, and it is {\em long} if $|o|=|G|$, otherwise $o$ is {\em short}. For a point $\bar{Q}$, the $G$-orbit $o$ lying over $\bar{Q}$ consists of all points $P\in\cX$ such that $\Phi(P)=\bar{Q}$. If $P\in o$ then $|o|=|G|/|G_P|$ and hence $\bar{\cQ}$ is a branch point if and only if $o$ is a short $G$-orbit. It may be that $G$ has no short orbits. This is the case if and only if every non-trivial element in $G$ is fixed--point-free on $\cX$, that is, the cover $\Phi$ is unramified. On the other hand, $G$ has a finite number of short orbits.

In this paper we deal with subgroups $G$ of $\aut(\cX)$ whose order is prime to $p$.

Let $\bar{\gg}$ be the genus of the quotient curve $\bar{\cX}=\cX/G$. Since $p\nmid |G|$, the Riemann-Hurwitz
genus formula  is
    \begin{equation}
    \label{eq1}
2\gg-2=|G|(2\bar{\gg}-2)+\sum_{i=1}^s (|G|-\ell_i)
    \end{equation}
    where $\ell_1,\ldots,\ell_s$ denote the size of the short orbits of $G$.
\begin{result}\cite[Theorem 11.56]{HKT}.
\label{res1} If $G$ is abelian then $|G|\leq 4\gg(\cX)+4$.
\end{result}
 Let $\cE$ be an elliptic curve equipped with its group law ``$\bigoplus$''with respect to a point $O\in \cE$. For a point $Q\in \cE$, the translation $\tau_Q$ is the map $P\mapsto Q\bigoplus P$ and the translation group of $\cE$ consists of all translations, and it is a subgroup of $\aut(\cX)$. Since $\bigoplus$ is commutative, any two translations commute. Also, the conjugate of a translation by any automorphism is still a translation. This gives the following well known result.
\begin{result}
\label{res4} The translation group $J(\cE)$ of $\cE$ is a sharply transitive permutation group on $\cE$, and $\aut(\cE)=J(\cE)\rtimes G_P$ for any $P\in\cE$.
\end{result}
Since any genus $2$-curve $\cX$ has an involutory automorphism in the center of $\aut(\cX)$, and the groups of order $9$ are abelian, Result \ref{res1} has the following corollary.
\begin{result}
\label{res15bis} Let $\gg(\cX)=2$. Then $\aut(\cX)$ has no subgroup of order $9$.
\end{result}
\subsection{Background on groups whose order is a power of an odd prime}
 In our proofs we use some basic results on $d$-groups, see \cite{BB1,BB2,BB3,huppertI1967}, together with a corollary to the classification of finite subgroups of the projective linear group $PGL(2,\K)$; see \cite{maddenevalentini1982} or
\cite[Theorem A.8]{HKT}.
\begin{result}
\label{resgroup}\cite[Theorem 2.37 (iii)]{MA}. Let $N$ be a normal subgroup of $G$. Then $G/N$ is abelian if and only if $N$ contains $G'$.
\end{result}
\begin{result} [Burnside basis theorem, {see \cite[Chapter 3, Satz 3.15]{huppertI1967}}]
\label{resburnside} Let $G$ be a $d$-group. Then $G/\Phi(G)$ is an elementary abelian group whose rank is equal to the minimum number of generators of $G$.
\end{result} A non-abelian group is \emph{minimal non-abelian} if each of its proper subgroups is abelian.
\begin{result}
\label{resgroup1}\cite[Lemma 2.2]{Xu2008} Let $G$ be a non-abelian $d$-group. Then $G$ is minimal non-abelian if and only if $G$ can be generated by two elements and $|G'|=d$.
\end{result}
For a group $G$ of order $d^m$ with $d$ prime, let $\{1\}=Z_0< Z_1=Z(G) \le Z_2\le \cdots Z_k \cdots \le Z_n=G$ be the ascending central series of $G$ where $Z_{k+1}/Z_k=Z(G/Z_k)$, for $0\le k \le n$, and $n=\mathrm{cl}(G)$ is the nilpotency class of $G$. Here $\mathrm{cl}(G)\leq m-1$, and if equality holds then $G$ has {\emph{maximal nilpotency class}}.
Let $G$ be of maximal nilpotency class. Then its descending central series $G=K_1(G)\geq K_2(G),\cdots \ge K_k(G) \ge \cdots \ge \{1\}$ with $K_{i+1}(G)=[K_i(G),G]$ has the same size $m-1$, and $Z_i(G)=K_{m-1}(G)$ for $i=0,1,\ldots m-1.$ Furthermore, its characteristic subgroup $$G_1=C_G(K_2(G)/K_4(G))$$
is the \emph{fundamental subgroup} of $G$; see \cite[Definition 14.3]{huppertI1967}.
\begin{result}
\label{resgroup2}\cite[Theorem 2.4]{Xu2008} Let $G$ be a $d$-group of maximal class. Then $[G:G']=d^2$, $G'=\Phi(G)$ and $G$ can be generated by two elements.
\end{result}
\begin{result}
\label{resgroup5}\cite[Theorem 3.5]{Xu2008}. Assume that a non-abelian $d$-group $G$ has an abelian maximal subgroup. Then $G$ is of maximal nilpotency class if and only if either $|Z(G)| = d$ or $[G : G']=d^2$.
\end{result}
\begin{result}[Corollary to Dickson's classification, {see also \cite[Theorem 1]{maddenevalentini1982}}]
\label{res5}
For an odd prime $d$, let $G$ be a $d$-subgroup of $PGL(2,\K)$. If $d\neq p$ then $G$ is cyclic and it has two fixed points in the natural $3$-transitive action of $PGL(2,\K)$ on the projective line over $\K$.
\end{result}
\subsection{Preliminary results on $3$-groups}
Essential ingredients from Group theory in our proofs are a number of deeper results on $3$-groups including classification theorems in terms of generators and relations.
\begin{result}\cite[Corollary 3.5]{Qu}
\label{rescor3.5}Let $G$ be a $3$-group of order at most $3^5.$ If $G$ has a unique minimal non-abelian subgroup then $G$ is isomorphic to one of the following groups.
\begin{itemize}
\item[(i)] Groups of order $3^5$ and of maximal nilpotency class containing an abelian maximal subgroup;
\item[(ii)] $<a,b,c|a^9=b^9=c^3=1,b^{-1}a^{-1}ba=c, c^{-1}a^{-1}ca=a^3,c^{-1}b^{-1}cb=b^{-3}>$.
\end{itemize}
\end{result}
\begin{result}\cite[Lemma 2.12]{Qu}
\label{qu14sep} Let $G$ be a $3$-group of maximal nilpotency class. If its fundamental subgroup $G_1$ is non-abelian then $|G|\geq 3^5$ and $G$ is isomorphic to one of the following non-isomorphic groups.
\begin{itemize}
\item[\rm(i)] $|G|=3^{2e}$ and $G=\langle s_1,s_2,s \mid s_1^{3^e}=s_2^{3^{e-1}}=1, s^3=s_1^{\delta 3^{e-1}}, [s_1,s]=s_2, [s_2,s]=s_2^{-3}s_1^{-3}, [s_2,s_1]=s_1^{3^{e-1}}\rangle$ where $\delta=0,1,2$.
\item[\rm{(ii)}] $|G|=3^{2e+1}$ and $G=\langle s_1,s_2,s \mid s_1^{3^e}=s_2^{3^{e}}=1, s^3=s_2^{\delta 3^{e-1}}, [s_1,s]=s_2, [s_2,s]=s_2^{-3}s_1^{-3}, [s_2,s_1]=s_2^{3^{e-1}} \rangle$ where $\delta=0,1,2$.
\end{itemize}
\end{result}
\begin{result}
\label{resgroup3}\cite[Theorem 3.6]{Qu}. Let $G$ be a $3$-group of order $\ge 3^6$. Then $G$ has a unique minimal non-abelian subgroup whenever $G$ has no abelian maximal subgroup but it has a maximal quotient $\bar{G}$ such that
 \begin{itemize}
 \item[(i)] $\bar{G}$ is of maximal nilpotency class;
 \item[(ii)] $\bar{G}$ has an abelian maximal group.
 \end{itemize}
\end{result}
\begin{result}
\label{nuovo} Let $G$ be a group of order $3^m$ with $m\geq 4$ whose center is an elementary abelian group of order $9$ and contains a subgroup $C$ of order $3$ such that the quotient group $G/C$ is of maximal nilpotency class. Then three of the four quotient groups $G/Z$  with $\{1\}\lneqq Z \lneqq Z(G)$ have center of order $3$, and one has center of order $9$.
\end{result}
\begin{proof}
With the above notation, $Z_2=\{g:gvg^{-1}v^{-1}\in Z_1, \forall v\in G\}.$ Also $n=m-2$ as $G/C$ has order $3^{m-1}$ and  (maximal) nilpotency class $m-2.$ Thus, $|Z_k|=3^{k+1}$ for $1\le k \le m-2$. In particular, $|Z_2|=27$. For an order $3$  subgroup $U$ of $Z_1$, let $\hat{U}=\{u: uvu^{-1}v^{-1}\in U,\forall v\in G\}.$ Obviously, $Z_1\le \hat{U}\le Z_2$. Moreover, $g\in \hat{U}$ if and only if the coset $gU$ is in $Z(G/U)$. Take  $g\in Z_2\setminus Z_1$ together with $v\in G$ such that $gv\neq vg$, and set $t=gvg^{-1}v^{-1}$. Obviously, $t\in Z_1$ is a nontrivial element. Take for $U$ the subgroup generated by $t$. Then $\hat{U}\gneqq Z_1$ whence $|\hat{U}|\geq 27$. Since $|Z_2|=27$ and $\hat{U}\le Z_2$, this yields $\hat{U}=Z_2$. Therefore, $|Z(G/U)|=9$, and  $Z_2=\{g:gvg^{-1}v^{-1}\in U, \forall v\in G\}$. Now, choose an order $3$ subgroup $D$ of $Z_1$ other than $U$. Obviously, $U\cap D=\{1\}$. Since  $\hat{D}=\{u: uvu^{-1}v^{-1}\in D,\forall v\in G\}$ and $\hat{D}\leq Z_2$, it turns out that
$\hat{D}=\{u: uvu^{-1}v^{-1}=1,\forall v\in G\}$, that is, $\hat{D}\le Z_1$. Thus $Z(G/D)=Z_1/D$ whence $|Z(G/D)|=3$.
\end{proof}
The following classical result is due to Blackburn \cite{Bl1}, see also \cite[Satz 14.17, 14.22]{huppertI1967}.
\begin{result}
\label{res14.17}
Let $G$ be a $3$-group of maximal nilpotency class. Then its fundamental group $G_1$ is metacyclic with $\mathrm{cl}(G_1)\le 2$, and $(G')'=\{1\}$. Furthermore, each of the maximal subgroups of $G$ other than $G_1$ is also of maximal nilpotency class.
\end{result}
A refinement of Blackburn's classification \cite{Bl1}, see also \cite[Lemma 2.11]{Qu} is given in the following result.
\begin{result}
\label{resbl1} Let $G$ be a $3$-group of maximal nilpotency class and of order $\ge 3^5$. If
\begin{itemize}
\item[(i)]
$G$ has an abelian subgroup $H$ of index $3$,
\item[(ii)] $G$ can be generated by two elements,
\item[(iii)] every element in $G\setminus H$ has order three,
\end{itemize}
then
\begin{equation}
\label{eq11sep}
G=
\begin{cases}
\langle s_1,s_2,\beta|s_1^{3^e}=s_2^{3^{e}}=\beta^3=1,[s_1,\beta]=s_2,[s_2,\beta]=s_2^{-3}s_1^{-3},[s_1,s_2]=1\rangle;{\mbox{ for $|G|=3^{2e+1}$}};\\
\langle s_1,s_2,\beta|s_1^{3^e}=s_2^{3^{e-1}}=\beta^3=1,[s_1,\beta]=s_2,[s_2,\beta]=s_2^{-3}s_1^{-3},[s_1,s_2]=1\rangle;{\mbox{ for $|G|=3^{2e}$}};\\
\end{cases}
\end{equation} where $H=\langle s_1,s_2 \rangle.$
\end{result}
\begin{proof}
From (ii), we have $[G:\Phi(G)]=9$, and hence the maximal subgroups of $G$ are exactly its subgroups of index $3$. Since $G$ is of maximal nilpotency class of order $\ge 243$, its maximal abelian subgroups are not of maximal nilpotency class. Therefore, (i) together with Result \ref{res14.17} yield that $H$ coincides with the fundamental subgroup $G_1$ of $G$. Now,  from \cite[Lemma 2.11]{Qu} one of the following cases occurs:
\vspace{0.2cm}
\begin{itemize}
\item[(A)] $|G|=3^{2e+1}$ with $e \geq 2$,
\begin{itemize}
\item[(1)] $ G=\langle s_1,s_2,\beta \mid s_1^{3^e}=s_2^{3^e}=\beta^3=1, [s_1,\beta]=s_2, [s_2,\beta]=s_2^{-3} s_1^{-3}, [s_1,s_2]=1 \rangle,$
 where $G_1=\langle s_1,s_2 \rangle$ and $g^3=1$ for all $g \in G \setminus G_1$;
\item[(2)] $G=\langle s_1,s_2,\beta \mid s_1^{3^e}=s_2^{3^e}=1, \beta^3=s_2^{3^{e-1}}, [s_1,\beta]=s_2, [s_2,\beta]=s_2^{-3} s_1^{-3}, [s_1,s_2]=1 \rangle, $ where $G_1=\langle s_1,s_2 \rangle$ and $\langle g^3 \rangle = \langle s_2^{3^{e-1}}\rangle$ for every $g \in G \setminus G_1$;
\item[(3)] $G=
 \langle s_1,s_2,\alpha,\beta \mid s_1^{3^e}=s_2^{3^{e-1}}=\beta^3=1,\alpha^3=s_1^{-3} s_2^{-1} s_1^{3^{e-1}},
 [\alpha,\beta]=s_1, [s_1,\beta]=s_2,[s_2,\beta]=s_2^{-3}s_1^{-3}, [s_1,\alpha]=[s_1,s_2]=1 \rangle, $
     with $G_1= \langle \alpha, s_1 \rangle= \langle \alpha,s_1,s_2 \rangle$.
\end{itemize}
\end{itemize}
\vspace{0.2cm}
\begin{itemize}
\item[(B)] $|G|=3^{2e}$ with $e \geq 2$
\begin{itemize}
 \item[(1)] $G=\langle s_1,s_2,\beta \mid s_1^{3^e}=s_2^{3^{e-1}}=\beta^3=1, [s_1,\beta]=s_2, [s_2,\beta]=s_2^{-3} s_1^{-3}, [s_1,s_2]=1 \rangle,$ where $G_1=\langle s_1,s_2 \rangle$ and $g^3=1$ for all $g \in G \setminus G_1$, or
 \item[(2)] $G= \langle s_1,s_2,\beta \mid s_1^{3^e}=s_2^{3^{e-1}}=1, \beta^3=s_2^{3^{e-1}}, [s_1,\beta]=s_2, [s_2,\beta]=s_2^{-3} s_1^{-3}, [s_1,s_2]=1 \rangle,$ with $G_1=\langle s_1,s_2 \rangle$ and $\langle g^3 \rangle = \langle s_1^{3^{e-1}}\rangle$ for every $g \in G \setminus G_1$
 \item[(3)] $G=\langle s_1,s_2,\alpha,\beta \mid s_1^{3^{e-1}}=s_2^{3^{e-1}}=\beta^3=1,\alpha^3=s_1^{-3} s_2^{-1} s_2^{\nu 3^{e-1}}, [\alpha,\beta]=s_1, [s_1,\beta]=s_2,[s_2,\beta]=s_2^{-3}s_1^{-3},[s_1,\alpha]=[s_1,s_2]=1 \rangle,$ where $\nu=1,2$, $G_1= \langle \alpha,s_1,s_2 \rangle$ and if $e>2$ or $\nu=2$ then $G_1=\langle \alpha,s_2 \rangle$.
 \end{itemize}
 \end{itemize}
 Clearly, Cases A(2) and B(2) are not possible by (iii) since no element $g \in G \setminus G_1$  may have order $3$ by $\langle g^3 \rangle = \langle s_1^{3^{e-1}}\rangle$ and $\langle g^3 \rangle = \langle s_2^{3^{e-1}}\rangle$, respectively. For Cases A(3) and B(3), we exhibit an element $g \in G \setminus G_1$ such that $g$ has order $9$. This will imply that Cases A(1) and B(1) cannot occur as well, so that Result \ref{resbl1} is true. Let $g=\alpha\beta$. Then $g$ has order $9$ whenever  $(\alpha\beta)^3=s_1^{3^{e-1}}$ for Case A(3) and $(\alpha\beta)^3=s_2^{\nu 3^{e-2}}$ for Case B(3). In both cases, the condition is equivalent to
 \begin{equation}
 \label{eqsep14}
 (\alpha\beta)^3=s_2 s_1^3 \alpha^3.
 \end{equation}
 The proof of (\ref{eqsep14}) is carried out in several steps.
 \subsubsection{Step 1}
  \begin{equation}
  \label{primaeq}
  \beta^{-1}s_2^{-1}\beta s_2^{-1}=\beta s_2 \beta^2 .
  \end{equation} From $[s_1,\beta]=s_2$ and $\beta^3=1$,
    $$\beta^{-1}s_2^{-1}\beta s_2^{-1} =\beta^{-1}(s_1^{-1}\beta^{-1}s_1\beta)^{-1}\beta(s_1^{-1}\beta^{-1}s_1\beta)^{-1}=\beta^{-2}s_1^{-1}\beta^2s_1=\beta s_1^{-1}\beta^{-1}s_1$$
and $$\beta s_2 \beta^2 =\beta(s_1^{-1}\beta^{-1}s_1\beta)\beta^2=\beta s_1^{-1}\beta^{-1}s_1$$
whence (\ref{primaeq}) follows.
\subsubsection{Step 2}
\begin{equation}
\label{secondaeq} \beta s_2 \beta^2=s_1^3 s_2.
\end{equation}
From $[s_2,\beta]=s_2^{-3} s_1^{-3}$, we have $s_1^3=\beta^{-1} s_2^{-1} \beta s_2^{-2}.$ Combining this with Equation \eqref{primaeq} gives $$s_1^3 s_2=(\beta^{-1} s_2^{-1} \beta s_2^{-2})s_2=\beta^{-1}s_2^{-1}\beta s_2^{-1}=\beta s_2 \beta^2,$$
which shows (\ref{secondaeq}). Since $\alpha, s_1$ and $s_2$ commute, (\ref{secondaeq}) yields
\begin{equation}
\label{terzaeq} s_1^3 s_2 \alpha^2=\alpha^2 s_1^3 s_2 = \alpha^2  \beta s_2 \beta^2.
\end{equation}
\subsubsection{Step 3}
From $[\alpha,\beta]=s_1$ and $[s_1,\beta]=s_2$, we infer
$\alpha\beta s_1^{-1}=\beta \alpha$ and $s_1s_2 \beta^{-1}=\beta^{-1} s_1=\beta^2 s_1.$ Hence, $$\alpha^2 \beta s_2 \beta^2=\alpha(\alpha \beta s_1^{-1})s_1 s_2 \beta^{-1}=\alpha(\beta \alpha)s_1 s_2 \beta^{-1}=\alpha\beta\alpha (s_1 s_2 \beta^{-1})
=\alpha\beta\alpha(\beta^2 s_1)=(\alpha\beta)^2 \beta s_1.$$ Multiply both sides by $\alpha$. Since $\alpha,s_1$ and $s_2$ commute pairwise, Equation \eqref{terzaeq} gives
\begin{equation}
\label{quartaeq} (\alpha \beta)^2 \beta \alpha s_1=(\alpha \beta)^2 \beta s_1 \alpha =s_1^3 s_2 \alpha^3. \end{equation}
Furthermore, $\alpha \beta s_1^{-1}=\beta \alpha$ yields $(\alpha \beta)^2 \beta \alpha s_1=(\alpha \beta)^3$.
\subsubsection{Step 4}
If Case A(3) occurs then $\alpha^3=s_1^{-3} s_2^{-1} s_1^{3^{e-1}}$ whence
$s_1^3 s_2 \alpha^3=s_1^3 s_2 (s_1^{-3} s_2^{-1} s_1^{3^{e-1}})=s_1^{3^{e-1}}.$
 If Case B(3) occurs then $\alpha^3=s_1^{-3} s_2^{-1} s_2^{\nu 3^{e-1}}$ whence
 $s_1^3 s_2 \alpha^3=s_1^3 s_2(s_1^{-3} s_2^{-1} s_2^{\nu 3^{e-1}})=s_2^{\nu 3^{e-1}}$.
\subsubsection{Step 5}
Finally, Claim (\ref{eqsep14}) follows from Step 4 and Equation \eqref{quartaeq}.
\end{proof}
In \cite{Qu}, $3$-groups with a unique minimal non-abelian subgroup of index $3$ are classified. Let $G$ be such a group with its minimal non-abelian subgroup $H$ of index $3$. By Result \ref{resgroup1}, $|H'|=3$.
With this notation, \cite[Lemma 2.17, Theorems 3.8, 3.9]{Qu} have the following corollary.
\begin{result}
\label{qu24agosto} Let $G$ be a $3$-group of order $\ge 3^5$ which has a unique minimal non-abelian subgroup $H$ of index $3$.
\begin{itemize}
\item[(i)]
If $H$ is metacyclic then $G$ is of maximal nilpotency class, and the converse also holds.
\item[(ii)]
If $H$ is not metacyclic, and $G\setminus H$ contains at least $\frac{4}{9}|G|$ elements of order $3$, and $G/H'$ is isomorphic to the group in Result \ref{resbl1}, then
$$G=\begin{cases}
\langle s_1,s_2,\beta,x|s_1^{3^n}=s_2^{3^{n-1}}=x^3=1,\beta^3=x^2, [s_1,\beta]=s_2,[s_2,\beta]=s_2^{-3}s_1^{-3},[s_1,s_2]=x, [x,s_1]=[x,s_2]=1\rangle;\\
{\mbox{ for $|G|=3^{2n+1},\,e \geq 3$}};\\
\langle s_1,s_2,\beta,x|s_1^{3^n}=s_2^{3^n}=x^3=1,\beta^3=x^2, [s_1,\beta]=s_2,[s_2,\beta]=s_2^{-3}s_1^{-3},[s_1,s_2]=x, [x,s_1]=[x,s_2]=1\rangle;\\
{\mbox{ for $|G|=3^{2n+2},\,n\geq 2$}};
\end{cases}
$$
where $H'=\langle x \rangle$.
\end{itemize}
\end{result}
\begin{proof}
Since $G$ is assumed to have a unique minimal non-abelian subgroup $H$, Claim (i) follows from \cite[Theorem 3.7]{Qu}. Therefore, $H$ is assumed to be non-metacyclic. From \cite[Theorems 3.8, 3.9]{Qu}, $G$ is one of the non-isomorphic groups below where $k=0,1,2$ and $\nu=1,2$.
\begin{itemize}
\item[\rm{(I)}] $|G|=3^{2e+2}$,
\begin{itemize}
\item[\rm{(1)}] $G=\langle s_1,s_2,\beta,x \mid s_1^{3^e}=s_2^{3^e}=x^3=1,\beta^3=x^k,[s_1,\beta]=s_2, [s_2,\beta]=s_2^{-3} s_1^{-3},[s_1,s_2]=x,$ $ [x,s_1]=[x,\beta]=1\rangle$,
\item[\rm{(2)}] $G=\langle s_1,s_2,\beta,x \mid s_1^{3^e}=s_2^{3^e}=x^3=1, \beta^3=s_2^{3^{e-1}}x^k, [s_1,\beta]=s_2, [s_2,\beta]=s_2^{-3} s_1^{-3}, [s_1,s_2]=x, $ $[x,s_1]=[x,\beta]=1\rangle$,
\item[\rm{(3)}] $G=\langle s_1,s_2,\alpha,\beta,x \mid s_1^{3^e}=s_2^{3^{e-1}}=x^3=1, \beta^3=x^k, \alpha^3=s_1^{-3}s_2^{-1}s_1^{3^{e-1}},[\alpha,\beta]=s_1, [s_1,\alpha]=x,  $ $ [s_1,\beta]=s_2, [s_2,\beta]=s_2^{-3}s_1^{-3}, [s_1,s_2]=[x,\alpha]=[x,\beta]=1\rangle$,
\item[\rm{(4)}] $G=\langle s_1,s_2,\alpha,\beta,x \mid s_1^{3^e}=s_2^{3^{e-1}}=x^3=1, \beta^3=x^k, \alpha^3=s_1^{-3}s_2^{-1}s_1^{3^{e-1}}x,[\alpha,\beta]=s_1, [s_1,\alpha]=x, $ $[s_1,\beta]=s_2, [s_2,\beta]=s_2^{-3}s_1^{-3}, [s_1,s_2]=[x,\alpha]=[x,\beta]=1\rangle$;
\end{itemize}
\item[\rm{(II)}] $|G|=3^{2e+1}$,
\begin{itemize}
\item[\rm{(1)}] $G=\langle s_1,s_2,\beta,x \mid s_1^{3^e}=s_2^{3^{e-1}}=x^3=1, \beta^3=x^k, [s_1,\beta]=s_2, [s_2,\beta]=s_2^{-3}s_1^{-3}, [s_1,s_2]=x, $ $[x,s_1]=[x,\beta]=1 \rangle$,
\item[\rm{(2)}] $G=\langle s_1,s_2,\beta,x \mid s_1^{3^e}=s_2^{3^{e-1}}=x^3=1, \beta^3=s_1^{3^{e-1}}x^k, [s_1,\beta]=s_2, [s_2,\beta]=s_2^{-3}s_1^{-3}, [s_1,s_2]=x, $ $[x,s_1]=[x,\beta]=1 \rangle$,
\item[\rm{(3)}] $G=\langle s_1,s_2,\alpha,\beta,x \mid s_1^{3^{e-1}}=s_2^{3^{e-1}}=x^3=1, \beta^3=x^k, \alpha^3=s_1^{-3}s_2^{-1}s_2^{\nu 3^{e-2}},[\alpha,\beta]=s_1, [s_1,\alpha]=x,$ $ [s_1,\beta]=s_2, [s_2,\beta]=s_2^{-3}s_1^{-3}, [s_1,s_2]=[x,\alpha]=[x,\beta]=1\rangle$,
\item[\rm{(4)}] $G=\langle s_1,s_2,\alpha,\beta,x \mid s_1^{3^{e-1}}=s_2^{3^{e-1}}=x^3=1, \beta^3=x^k, \alpha^3=s_1^{-3}s_2^{-1}s_2^{\nu 3^{e-2}}x, [\alpha,\beta]=s_1, [s_1,\alpha]=x, [s_1,\beta]=s_2, [s_2,\beta]=s_2^{-3}s_1^{-3}, [s_1,s_2]=[x,\alpha]=[x,\beta]=1\rangle$.
\end{itemize}
\end{itemize}
Now, the proof is performed in several steps.
\subsubsection{Step 1} From \cite[Theorem 3.6]{Qu} $\bar G=G/H^\prime$ is a $3$-group of maximal class with an abelian maximal subgroup $\bar H=H/H^\prime$. Also, $\bar G= G/ {H^\prime}$ is isomorphic to the group in Result \ref{resbl1}. The proof of \cite[Theorems 3.8, 3.9]{Qu} shows that either Case (I)(1) or Case (II)(1) occurs for $G$ as they are the unique possibilities for $G$ such that $\bar G$ is one of the groups in Result \ref{resbl1}. In both cases, $\langle s_1,s_2\rangle$ is a minimal non-abelian subgroup, and hence $H=\langle s_1,s_2\rangle$.
It remains to prove that
Cases $k=0,1$ cannot actually occur. For this purpose, an inductive argument on $|G|$ is used.

We begin with Case (I)(1). First, let $|G|=3^{2e+2}$ with $e \geq 2$. Observe that $x \in Z(G)$ as $x$ commutes with both $s_1$ and $\beta$ by $[x,\beta]=[x,s_1]=1$.
\subsubsection{Step 2} We prove $[x,s_2]=1$. Since
$$[x,s_2]=[x,[s_1,\beta]]=x^{-1}(s_1^{-1}\beta^(-1)s_1\beta)^{-1}x(s_1^{-1}\beta^{-1}s_1\beta)= x^{-1}\beta^{-1}s_1^{-1}\beta(s_1 x s_1^{-1})\beta^{-1} s_1 \beta,$$
$[x,s_1]=1$ yields
$[x,s_2]=x^{-1}\beta^{-1} s_1^{-1} (\beta x \beta^{-1}) s_1\beta.$
From this and $[x,\beta]=1$, $[x,s_2]=x^{-1} \beta^{-1} (s_1^{-1} x s_1)\beta=1.$
\subsubsection{Step 3} We prove that $s_2^{3^{e-1}}\in Z(G)$.
From  $[s_1,s_2]=[s_2,s_1]^{-1}=x^{-1}$ and $x \in Z(G)$, $s_1$ commutes with $[s_1,s_2]$. Hence from \cite[Hilfssatz 1.3 (a)]{huppertI1967},
\begin{equation}
\label{eq26sep}
[s_2^{3^{e-1}},s_1]=[s_2,s_1]^{3^{e-1}}=(x^{-1})^{3^{e-1}}=1.
\end{equation}
Since both subgroups $\langle s_1^3,s_2 \rangle$ and $\langle s_2^3,s_1 \rangle$ are proper subgroups of $H$,
the assumption on $H=\langle s_1,s_2 \rangle$ to be minimal non-abelian yields $[s_1,s_2^3]=1$ and $[s_2,s_1^3]=1$. From $\beta s_2 \beta=s_2^{-2} s_1^{-3}=s_1^{-3} s_2^{-2}$,  $$\beta^{-1} s_2^{3^{e-1}} \beta=(s_2^{-2} s_1^{-3})^{3^{e-1}}=(s_2^{-2})^{3^{e-1}} (s_1^{-3})^{3^{e-1}}=(s_2^{3^{e-1}})^{-2},$$ which is equal to $s_2^{3^{e-1}}$ by $(s_2^{3^{e-1}})^3=1$. This together with (\ref{eq26sep}) yield $s_2^{3^{e-1}} \in Z(G)$. Since $x\neq s_2^{3^{e-1}}$, we also have that $s_2^{3^{e-1}}$ is another generator of $Z(G)$.
\subsubsection{Step 4}
The quotient group $\tilde G= G/\langle s_2^{3^{e-1}} \rangle$ is given by
$$\tilde G=\langle \tilde{s}_1,\tilde{s}_2,\tilde{\beta},\tilde{x} \mid \tilde{s}_1^{3^e}=\tilde{s}_2^{3^{e-1}}=\tilde{x}^3=1, \tilde{\beta}^3=\tilde{x}^k, [\tilde{s}_1,\tilde{\beta}]=\tilde{s}_2, [\tilde{s}_2,\tilde{\beta}]=\tilde{s}_2^{-3}\tilde{s}_1^{-3}, [\tilde{s}_1,\tilde{s}_2]=\tilde{x}, [\tilde{x},s_1]=[\tilde{x},\beta]=1 \rangle,$$
and hence $\tilde{G}$ satisfies Case (II)(1) with the same value of $k$ as in $G$. From $ s_2^{3^{e-1}}\in H$, in the natural homomorphism $G\mapsto \bar{G}$, every element of order $3$ in $G\setminus H$ produces an element in $\tilde{G}$ of the same order $3$. Therefore, $\tilde{G} \setminus \tilde{H}$ contains at least as many as $4|\tilde{G}|/9$ elements of order $3$.
\subsubsection{Step 5}
With some formal changes, the above arguments apply to Case (II)(1). Therefore, if $G$ satisfies Case (II)(1) with a given value of $k$ then $\tilde{G}=G/\langle s_1^{3^{e-1}} \rangle$ satisfies Case (I)(1), and it has at least as many as $4|\tilde{G}|/9$ elements in $\tilde{G} \setminus \tilde{H}$.
\subsubsection{Step 6}
Therefore, with an inductive argument on $e$, it suffices to rule out the cases $k=0,1$ when $|G|$ is assumed to be as small as possible, namely $|G|=3^6$, that is, $e=2$. A MAGMA aided computation shows that if $|G|=3^6$ and $k=0$ then $G \cong SmallGroup(729,49)$ while if $k=1$ then $G \cong SmallGroup(729,54)$. In the former case $G \setminus H$ contains $162<324=4|G|/9$ elements of order $3$, a contradiction. In the latter case $G \setminus H$ contains no elements of order $3$, again a contradiction.
\end{proof}

\subsection{Some preliminary results on $3$-groups of automorphisms of elliptic curves}
\label{pre}
Assume that $p\neq 3$.
Let $\bar{\cE}$ be the elliptic curve of homogenous equation $X^3+Y^3+Z^3=0$. Then $\bar{\cE}$ has zero $j$-invariant, $P=(-1,0,1)$ is an inflection point of $\bar{\cE}$, and the map $(X,Y,Z)\to (X,\varepsilon Y,Z)$ with a primitive cubic root of unity $\varepsilon$ is an order $3$ automorphism $\bar{\alpha}$ of $\bar{\cE}$ fixing $P$. Actually, $\bar{\alpha}$ has two more fixed points on $\bar{\cE}$, namely
$P_1=(-\varepsilon, 0,1)$ and $P_2=(-\varepsilon^2,0,1)$. The map $(X,Y,Z)\to (\varepsilon X, \varepsilon^2 Y,Z)$ is another order $3$ automorphism $\beta$ of $\bar{\cE}$. Since $\bar{\alpha}$ and $\bar{\beta}$ commute, $\bar{\beta}$ preserves the pointset $\pi=\{P,P_1,P_2\}$. Let $\bar{J}(\bar{\cE})$ denote the translation group of $\bar{\cE}$. Then $\bar{\beta}$ belongs to $\bar{J}(\bar{\cE})$ but $\bar{\alpha}$ does not.
Choose a $3$-subgroup $\bar{G}$ of $\aut(\bar{\cE})$ such that some element of order $3$ of $\bar{G}$ has a fixed point on $\bar{\cE}$. Since $\aut(\bar{\cX})$ acts transitively on the points of $\bar{\cE}$, such a fixed point may be assumed to be $P$. Then
\begin{itemize}
\item[(i)] $|\bar{G}|=3^{h+1}$ with $h\ge 1$;
\item[(ii)] $|\bar{G}_P|=3$.
\end{itemize}
Furthermore, $\bar{G}_P=\langle \bar{\alpha}\rangle$ and $\bar{\beta}\in Z(\bar{G)}$. More precisely, $Z(\bar{G})=\langle \bar{\beta} \rangle$ as $Z(\bar{G})$ preserves $\pi$ and (ii) holds.
\begin{lemma}
\label{lem28jun} $\bar{G}=\bar{H}\rtimes \bar{G}_P$ where $\bar{H}=\bar{G}\cap J(\bar{\cE})$.
 \end{lemma}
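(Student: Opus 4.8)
The plan is to establish the three standard ingredients of an internal semidirect product: that $\bar{H}$ is normal in $\bar{G}$, that $\bar{H}\cap\bar{G}_P$ is trivial, and that $\bar{H}\,\bar{G}_P$ exhausts $\bar{G}$. The first two are immediate. Since the translation group $J(\bar{\cE})$ is normal in $\aut(\bar{\cE})$ (conjugates of translations are translations), its intersection $\bar{H}=\bar{G}\cap J(\bar{\cE})$ with $\bar{G}$ is normal in $\bar{G}$, so $\bar{H}\trianglelefteq\bar{G}$. For the trivial intersection, recall from Result \ref{res4} that $J(\bar{\cE})$ acts sharply transitively, hence its only element fixing a point of $\bar{\cE}$ is the identity; as every element of $\bar{G}_P=\langle\bar{\alpha}\rangle$ fixes $P$, we conclude $\bar{H}\cap\bar{G}_P=\{1\}$.

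The heart of the proof is to show $[\bar{G}:\bar{H}]=3$, equivalently $|\bar{H}|=3^h$, for then $\bar{H}\,\bar{G}_P$ is a subgroup (as $\bar{H}\trianglelefteq\bar{G}$ and the intersection is trivial) of order $|\bar{H}|\,|\bar{G}_P|=3^{h+1}=|\bar{G}|$, forcing $\bar{H}\,\bar{G}_P=\bar{G}$. To compute this index I would use the splitting $\aut(\bar{\cE})=J(\bar{\cE})\rtimes(\aut(\bar{\cE}))_P$ of Result \ref{res4} and let $\pi\colon\aut(\bar{\cE})\to(\aut(\bar{\cE}))_P$ be the associated projection, whose kernel is $J(\bar{\cE})$. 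Restricting $\pi$ to $\bar{G}$ gives kernel $\bar{G}\cap J(\bar{\cE})=\bar{H}$, so $\bar{G}/\bar{H}\cong\pi(\bar{G})$. Because $\bar{\alpha}$ fixes $P$ it lies in $(\aut(\bar{\cE}))_P$ and is therefore fixed by $\pi$, so $\pi(\bar{G})\supseteq\langle\bar{\alpha}\rangle$ has order at least $3$.

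It then remains to bound $\pi(\bar{G})$ from above, and this is the step I expect to be the crux. Since $\bar{G}$ is a $3$-group, $\pi(\bar{G})$ is a $3$-subgroup of the point stabilizer $(\aut(\bar{\cE}))_P$. For an elliptic curve of vanishing $j$-invariant in characteristic $\neq 2,3$ (our setting, since $p$ is odd and $p\neq 3$), this stabilizer, namely the group of automorphisms fixing the origin, is cyclic of order $6$, so its Sylow $3$-subgroup is exactly $\langle\bar{\alpha}\rangle$ of order $3$. Hence $\pi(\bar{G})=\langle\bar{\alpha}\rangle$, giving $[\bar{G}:\bar{H}]=3$ and $|\bar{H}|=3^h$. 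Combining this with the normality of $\bar{H}$ and the triviality of $\bar{H}\cap\bar{G}_P$ yields $\bar{G}=\bar{H}\rtimes\bar{G}_P$. The only genuinely curve-theoretic input is the order of $(\aut(\bar{\cE}))_P$; should one prefer to avoid quoting it, the action on the cotangent line at $P$ embeds the prime-to-$p$ part of $(\aut(\bar{\cE}))_P$ into $\K^{*}$, showing it is cyclic, and one then needs only to exclude an order-$9$ automorphism fixing $P$, which does not exist for $j=0$.
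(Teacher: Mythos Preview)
Your proof is correct and takes a genuinely different, more direct route than the paper's. The paper argues by induction on $h$: it quotients $\bar{G}$ by $\bar{W}=\bar{G}\cap J(\bar{\cE})$, obtains a smaller $3$-group $\hat{G}$ acting on the quotient elliptic curve $\hat{\cE}=\bar{\cE}/\bar{W}$, applies the inductive hypothesis there, and then lifts the resulting semidirect decomposition back to $\bar{G}$, finally arguing that the lifted complement coincides with $\bar{G}\cap J(\bar{\cE})$ by a counting argument on orbits. Your approach bypasses all of this: you use directly that $\aut(\bar{\cE})=J(\bar{\cE})\rtimes(\aut(\bar{\cE}))_P$ from Result~\ref{res4}, restrict the projection to $\bar{G}$, and invoke the classical fact that for $j(\bar{\cE})=0$ in characteristic $\neq 2,3$ the stabilizer $(\aut(\bar{\cE}))_P$ is cyclic of order~$6$, so its only $3$-subgroup is $\langle\bar{\alpha}\rangle$. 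This is exactly the same external input that the paper tacitly uses when it asserts $|\bar{G}_P|=3$ in item~(ii) just before the lemma, so your argument is no less self-contained; it simply exploits that fact at the level of $\aut(\bar{\cE})$ rather than at the level of $\bar{G}$, which makes the induction unnecessary. Your alternative justification via the faithful action on the cotangent line at $P$ is also valid and is a nice way to avoid quoting the order of the stabilizer.
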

 \begin{proof} If $|\bar{G}|=9$ then $\bar{G}=\langle \bar{\beta} \rangle \times \bar{G}_P $ an the claim follows for $h=1$ by $\bar{\beta}\in J(\bar{\cE}).$
 Let $\bar{W}=\bar{G}\cap J(\bar{\cE})$. Then $\bar{W}$ is a nontrivial normal subgroup and $\hat{G}=\bar{G}/\bar{W}$ is a subgroup of $\aut(\hat{\cE})$ where the quotient curve
 $\hat{\cE}=\bar{\cE}/\bar{W}$ is also elliptic as no nontrivial element in $J(\bar{\cE})$ fixes a point in $\bar{\cE}$. By induction on $h$, assume that $\hat{G}=\hat{T}\rtimes \hat{G}_{\hat{P}}$ where $\hat{T}<J(\hat{\cE})$ and $\hat{P}$ is the (unique) point of $\hat{\cE}$ lying under $P$ in the cover $\bar{\cE}|\hat{\cE}$. Let $\bar{T}$ be the subgroup of $\bar{G}$ containing $\bar{W}$ such that $\hat{T}=\bar{T}/\bar{W}$. Then no nontrivial element in $\bar{T}$ fixes a point of $\bar{\cE}$. Furthermore,
 $|\bar{T}|=|\bar{H}|$, and $\bar{T}$ is a normal subgroup of $\bar{G}$. Therefore, $\bar{G}=\bar{T}\rtimes \bar{G}_P$. Now, let $o$ be the $\bar{T}$-orbit of $P$, and take any point $\bar{P}'$ from $o$. Then $\bar{P}'=\bar{t}({P})$ for some $\bar{t}\in \bar{T}$. On the other hand, $J(\bar{\cE})$ contains an element $\bar{j}$ such that $\bar{j}(\bar{P}')={P}$. Then $\bar{j}\bar{t}({P})={P}$ whence $\bar{j}\in \bar{G}$. Since $|o|=|\bar{T}|$ this yields that $\bar{G}\cap J(\bar{\cE})$ contains at least as many as $|\bar{T}|=|\bar{H}|$ elements. As $\bar{G}_P\cap J(\bar{\cE})$ is trivial, this shows that $\bar{G}\cap J(\bar{\cE})=\bar{T}$ whence the claim follows.
\end{proof}
\begin{lemma}
\label{lem30luglio2018} The centralizer $C_{\bar{G}_P}$ of $\bar{G}_P$ in $\bar{G}$ is an elementary abelian group of order $9$.
\end{lemma}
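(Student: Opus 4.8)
The plan is to identify $C_{\bar G}(\bar G_P)$ with the subgroup $\langle\bar\alpha,\bar\beta\rangle$ and to check that this subgroup is elementary abelian of order $9$. The inclusion $\langle\bar\alpha,\bar\beta\rangle\le C_{\bar G}(\bar G_P)$ is immediate: $\bar\alpha$ generates $\bar G_P$ and so centralizes it, while $\bar\beta\in Z(\bar G)$ centralizes all of $\bar G$. Both generators have order $3$ and commute, and they generate a group of order exactly $9$ because $\bar\beta\in J(\bar\cE)$ whereas $\bar\alpha\notin J(\bar\cE)$ (no nontrivial translation fixes a point, while $\bar\alpha$ fixes $P$); hence $\bar\alpha\notin\langle\bar\beta\rangle$ and $\langle\bar\alpha,\bar\beta\rangle\cong C_3\times C_3$.

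For the reverse inclusion I would exploit the action on the fixed locus of $\bar\alpha$. Let $g\in C_{\bar G}(\bar G_P)$. Since $g$ commutes with $\bar\alpha$, it sends fixed points of $\bar\alpha$ to fixed points of $\bar\alpha$, so $g$ permutes the three-element set $\pi=\{P,P_1,P_2\}$. As $g$ belongs to the $3$-group $\bar G$, the permutation it induces on $\pi$ has order a power of $3$, so it is either the identity or a $3$-cycle (a transposition being excluded). If this permutation is the identity then $g$ fixes $P$, whence $g\in\bar G_P=\langle\bar\alpha\rangle$. Otherwise $g$ induces a $3$-cycle; computing $\bar\beta(P)=P_1$ shows that $\bar\beta$ itself induces a nontrivial $3$-cycle on $\pi$, so $\bar\beta$ and $\bar\beta^2$ realize the two $3$-cycles. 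Picking $i\in\{1,2\}$ with $g$ and $\bar\beta^i$ inducing the same permutation, the element $g\bar\beta^{-i}$ fixes $P$ and so lies in $\langle\bar\alpha\rangle$; thus $g\in\langle\bar\alpha,\bar\beta\rangle$ in this case as well. Combining the two inclusions gives $C_{\bar G}(\bar G_P)=\langle\bar\alpha,\bar\beta\rangle$, which is elementary abelian of order $9$.

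The crucial step, and the only one requiring care, is the passage from the algebraic condition ``$g$ commutes with $\bar\alpha$'' to the combinatorial one ``$g$ permutes $\pi$'', combined with the remark that an element of a $3$-group cannot act as a transposition on the three points of $\pi$. Once this is in place, the reduction to matching $g$ against a power of $\bar\beta$ is forced by the fact that $\bar\beta$ already realizes the full cyclic action on $\pi$, and the remaining verifications ($\bar\beta(P)=P_1$ and $\bar\alpha\notin J(\bar\cE)$) are routine.
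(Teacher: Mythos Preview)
Your proof is correct and follows essentially the same route as the paper: both arguments hinge on the observation that any element commuting with $\bar\alpha$ must permute the three-point fixed set $\pi$, and then use $|\bar G_P|=3$ to bound the centralizer from above while $\bar\alpha$ and $\bar\beta$ supply the lower bound. The paper compresses your case analysis into a single orbit--stabilizer count (the stabilizer of $P$ inside the centralizer is $\bar G_P$, the orbit lies in $\pi$), whereas you explicitly match $g$ against a power of $\bar\beta$; these are two phrasings of the same idea.
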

\begin{proof} Since $\bar{G}_{{P}}=\langle \bar{\alpha}\rangle$ and $\bar{\alpha}$ fixes exactly three points, namely those in $\pi$,  $C_{\bar{G}}(\bar{G}_{{P}})$ preserves $\pi$. This together with $\bar{G}_{{P}}$ yield $|C_{\bar{G}}(\bar{G}_{{P}})|=3$. Therefore $C_{\bar{G}}(\bar{G}_{{P}})$ is the direct product of $\bar{G}_{{P}}$ and $Z(\bar{G})=\langle \bar{\beta} \rangle$.
\end{proof}
The possibilities for the structure of $\bar{H}=\bar{G}\cap J(\bar{\cE})$ in Lemma \ref{lem28jun} are rather restricted.
\begin{proposition}
\label{prop18jul2018PG} If $h$ is even, say $2n$, then $\bar{H}\cong C_{3^n}\times C_{3^n}$. If $h$ is odd, say $2n-1$, then  $\bar{H}\cong C_{3^n}\times C_{3^{n-1}}$.
\end{proposition}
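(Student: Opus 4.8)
The plan is to exploit the fact that, since $\bar{\cE}$ has zero $j$-invariant, the order $3$ automorphism $\bar\alpha$ is, after a suitable choice of origin, complex multiplication by a primitive cube root of unity. First I would take the inflection point $P$ as the origin $O$ of the group law on $\bar{\cE}$; this is harmless, since the translation group, and hence $\bar{H}=\bar{G}\cap J(\bar{\cE})$, does not depend on the choice of origin. With $O=P$ the automorphism $\bar\alpha$ fixes $O$, so it is a group automorphism of $(\bar{\cE},\bigoplus)$; as $\bar\alpha^3=\mathrm{id}$, $\bar\alpha\neq\mathrm{id}$ and $\mathrm{End}(\bar{\cE})$ has no zero divisors, it satisfies $\bar\alpha^2+\bar\alpha+1=0$, and under the identification $\tau_Q\leftrightarrow Q$ its conjugation action on $J(\bar{\cE})$ becomes $Q\mapsto\bar\alpha(Q)$. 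Thus, by Lemma \ref{lem28jun}, $\bar{H}$ is identified with a finite $\bar\alpha$-stable subgroup of $(\bar{\cE},\bigoplus)$; being a $3$-group it lies in the $3$-primary torsion $\bar{\cE}[3^\infty]$, which for $p\neq 3$ is $(\mathbb{Q}_3/\mathbb{Z}_3)^2$.

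The key device is the endomorphism $\lambda=\bar\alpha-\mathrm{id}$, that is $Q\mapsto\bar\alpha(Q)\ominus Q$, and I would first record its two basic properties. Its kernel is the fixed-point set $\{P,P_1,P_2\}$ of $\bar\alpha$, so $\ker\lambda\cong C_3$; since $p\neq 3$, the map $\bar\alpha$ acts on the space of invariant differentials by a primitive cube root of unity $\neq 1$, whence $\lambda$ is separable and $\deg\lambda=|\ker\lambda|=3$. Next, from $\bar\alpha^2=-\bar\alpha-\mathrm{id}$ one computes $\lambda^2=(\bar\alpha-\mathrm{id})^2=-3\bar\alpha$, so $\lambda^2$ is an associate of $[3]$; consequently $\ker\lambda^{2}=\bar{\cE}[3]$ and, more generally, $\ker\lambda^{2m}=\bar{\cE}[3^m]$.

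Now I would pass to the module structure. The ring $R=\mathbb{Z}_3[\bar\alpha]\cong\mathbb{Z}_3[\zeta]$ is a discrete valuation ring with uniformizer $\lambda$ and residue field $\F_3$ (because $3$ ramifies in $\mathbb{Z}[\zeta]$, with $\lambda^2$ an associate of $3$), and $\bar{\cE}[3^\infty]$ is a torsion $R$-module on which $\lambda$ is divisible with one-dimensional $\lambda$-torsion $\ker\lambda$. Hence $\bar{\cE}[3^\infty]\cong\mathrm{Frac}(R)/R$ is the Prüfer module over $R$, whose finite submodules form the single chain $\ker\lambda\subset\ker\lambda^2\subset\cdots$. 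Therefore the $\bar\alpha$-stable subgroup $\bar{H}$ coincides with $\ker\lambda^{j}$ for some $j$, and comparing orders, $|\bar{H}|=3^h$ forces $\bar{H}=\ker\lambda^{h}$.

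It remains to read off the abelian group structure of $\ker\lambda^{h}$, which is where the even/odd dichotomy appears. If $h=2n$, then $\ker\lambda^{2n}=\bar{\cE}[3^n]\cong C_{3^n}\times C_{3^n}$. If $h=2n-1$, then $\bar{\cE}[3^{n-1}]=\ker\lambda^{2n-2}\subsetneq\bar{H}\subsetneq\ker\lambda^{2n}=\bar{\cE}[3^n]$, so $\bar{H}$ has rank at most $2$, exponent dividing $3^n$, and order $3^{2n-1}$; writing $\bar{H}\cong C_{3^a}\times C_{3^b}$ with $a\ge b$ and $a+b=2n-1$, the constraint $a\le n$ forces $a=n$ and $b=n-1$, i.e. $\bar{H}\cong C_{3^n}\times C_{3^{n-1}}$. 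I expect the main obstacle to lie in the second and third paragraphs: verifying that $\lambda$ is separable of degree $3$ with $\lambda^2$ an associate of $[3]$, so that $R=\mathbb{Z}_3[\bar\alpha]$ is a DVR with uniformizer $\lambda$, and that $\bar{\cE}[3^\infty]$ is the Prüfer module forcing $\bar{H}=\ker\lambda^{h}$. Once these structural facts are in place, the final extraction of the group type is a routine count.
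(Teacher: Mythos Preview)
Your proof is correct and takes a genuinely different route from the paper's. The paper argues purely inside the semidirect product $\bar G=\bar H\rtimes\langle\bar\alpha\rangle$: writing $\bar H=\bar U\times\bar V$ with $\bar U\cong C_{3^i}$, $\bar V\cong C_{3^j}$, $i\ge j$, it compares $\bar U$ with its conjugate $\bar T=\bar\alpha\,\bar U\,\bar\alpha^{-1}$, shows that the subgroup $\bar W=\{\bar u\in\bar U:\bar\alpha\,\bar u\,\bar\alpha^{-1}\in\bar U\}$ is either trivial or equal to $Z(\bar G)$ of order~$3$, and in either case deduces $i-j\le 1$ by a counting argument on the quotient $\bar H/Z(\bar G)$; parity of $h=i+j$ then pins down $(i,j)$. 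No arithmetic of $\bar\cE$ beyond ``$\bar\alpha$ fixes exactly three points'' is used.

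Your argument instead exploits that $\bar\alpha$ is complex multiplication: with $P$ as origin, $\bar\alpha\in\mathrm{End}(\bar\cE)$ satisfies $\bar\alpha^2+\bar\alpha+1=0$, and $\lambda=\bar\alpha-1$ is a separable isogeny of degree~$3$ with $\lambda^2$ an associate of~$[3]$. Viewing $\bar\cE[3^\infty]$ as a module over the DVR $R=\mathbb{Z}_3[\bar\alpha]$ with uniformizer~$\lambda$, you identify it with $\mathrm{Frac}(R)/R$ and conclude that the $\bar\alpha$-stable finite subgroups form the single chain $\ker\lambda\subset\ker\lambda^2\subset\cdots$, forcing $\bar H=\ker\lambda^h$. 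This is more conceptual and actually proves more: $\bar H$ is \emph{uniquely determined} as a subgroup of $\bar\cE$, not merely up to abstract isomorphism. The price is heavier background (endomorphism rings, separability via the action on differentials, torsion modules over DVRs), and one should note that the argument is unaffected by whether $\bar\cE$ is ordinary or supersingular, since only the commutative subring $\mathbb{Z}[\bar\alpha]\subset\mathrm{End}(\bar\cE)$ and the structure of $\bar\cE[3^\infty]$ for $p\neq 3$ are used. The paper's approach, by contrast, is entirely elementary group theory and would transplant verbatim to any setting where one has an abelian group $\bar H$ of $3$-power order, generated by two elements, admitting an order-$3$ automorphism whose fixed subgroup has order~$3$.
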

\begin{proof} Since $\bar{H}$ is a subgroup of $J(\bar{\cE})$, we have  $\bar{H}=\bar{U}\times \bar{V}$ where $\bar{U}\cong C_{3^i}\times C_{3^j}$ with $i\ge j$ and $i+j=h$. Let $\bar{T}=\bar{\alpha} \bar{U} \bar{\alpha}^{-1}$.
Consider the set of all elements $\bar{u}\in U$ such that $\bar{\alpha} \bar{u} \bar{\alpha}^{-1}\in \bar{U}$. Actually, this set $\bar{W}$ is a subgroup of $\bar{U}$. Since $\bar{U}$ is cyclic, $\bar{W}$ is also cyclic and it consists of all elements $\bar{u}\in \bar{U}$ whose order divides $|\bar{W}|$. In particular, since $\bar{u}$ and $\bar{\alpha} \bar{u} \bar{\alpha}^{-1}$ have the same order, $\bar{u}\in \bar{W}$ implies $\bar{\alpha} \bar{u} \bar{\alpha}^{-1}\in \bar{W}$, that is, the subgroup of $\bar{G}$ generated by $\bar{W}$ together with $\bar{\alpha}$ is the semidirect product $\bar{W}\rtimes \langle \bar{\alpha} \rangle$.

If $\bar{W}$ is trivial then $\bar{U}\cap \bar{T}=\{\bar{1}\}$ and hence
$\bar{U}\times \bar{T}$ is a subgroup of $\bar{H}$, whence $\bar{H}=\bar{U}\times \bar{T}$ and the first claim follows. Otherwise, $\bar{W}\rtimes \bar{G}_{{P}}$ is a proper subgroup of $\bar{G}$. Since $\bar{W}$ is cyclic, we have
$|\bar{W}|=3$. Therefore $\bar{W}\rtimes \bar{G}_P$ has order $9$ and is abelian. Thus $\bar{W}\rtimes \bar{G}_P=\bar{W}\times \bar{G}_P$. Therefore a generator of $\bar{W}$ commutes with $\bar{\alpha}$, and hence $\bar{W}=\langle \bar{\beta} \rangle$ showing that $\bar{W}=Z(\bar{G})$. Look at the quotient group $\hat{H}=\bar{H}/Z(\bar{G})$. Since $Z(\bar{G})=\bar{W}\le \bar{U}$, we have that $\hat{U}=\bar{U}/Z(\bar{G})$ is a subgroup of $\hat{H}$ of order
$3^{i-1}$. The same holds for the subgroup $\hat{T}=\bar{T}/Z(\bar{G})$ of order $3^{i-1}$. Let $\hat{s}\in \hat{U}\cap \hat{T}$. If $\hat{s}$ is nontrivial then there exist $\bar{u},\bar{t}\in \bar{U}\setminus \bar{W}$ such that
$\bar{u}\bar{t}^{-1}\in \bar{W}$. If $\bar{u}=\bar{t}$ then $\bar{u}=\bar{\alpha} \bar{u}' \bar{\alpha} ^{-1}$ for some $\bar{u}'\in \bar{U}$. This implies  $\bar{u}'\in \bar{W}$ whence $\bar{u}'\in Z(\bar{G})$ and $\bar{u}=\bar{u}'\in Z(\bar{G})$, a contradiction. Similarly, if $\bar{u}=\bar{t}\bar{\beta}$ (or $\bar{u}=\bar{t}\bar{\beta}^2$) then
$\bar{u}=\bar{\alpha} \bar{u}' \bar{\alpha}^{-1}\bar{\beta}$ for some $\bar{u}'\in \bar{W}$. Since $\bar{\beta}\in \bar{W}$, this implies $\bar{u}=\bar{u}'\bar{\beta}$ whence $\bar{u}\in \bar{W}=Z(\bar{G})$, a contradiction.
Therefore,  $\hat{U}\cap \hat{T}$ is trivial, and $|\hat{H}|=|\hat{U}||\hat{T}|$ whence $|\hat{H}|=3^{2(i-1)}$. On the other hand, $|\hat{H}|=\frac{1}{3}|\bar{H}|=3^{i+j-1}$. Thus
$2(i-1)\leq i+j-1$ whence $i-1\le j$. Since $i\geq j$, this yields either $i=j$ or $j=i-1$.
\end{proof}
\begin{lemma}
\label{lemB29luglio2018} If $h\geq 2$ then the following hold.
\begin{itemize}
\item[(i)] $\bar{G}$ is of maximal nilpotency class.
\item[(ii)] $\bar{G}$ can be generated by two elements.
\item[(iii)] Every element in $\bar{G}\setminus \bar{H}$ has order $3$.
\item[(iv)] In terms of generators and relations, $\bar{G}$ is given in Result \ref{resbl1}.
\end{itemize}
\end{lemma}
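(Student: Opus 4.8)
The plan is to verify, one at a time, the three structural hypotheses of Result \ref{resbl1}—an abelian maximal subgroup, generation by two elements, order $3$ for the elements outside it—which will simultaneously yield (i)--(iii), and then to read off (iv) from that classification. First I would record the ambient data. By Lemma \ref{lem28jun} we have $\bar{G}=\bar{H}\rtimes\bar{G}_P$ with $\bar{H}=\bar{G}\cap J(\bar{\cE})$, and since $|\bar{G}_P|=3$ the subgroup $\bar{H}$ has index $3$ in $\bar{G}$; being contained in the translation group $J(\bar{\cE})$, which is abelian by Result \ref{res4}, $\bar{H}$ is an abelian maximal subgroup. As $h\geq 2$ gives $|\bar{G}|=3^{h+1}\geq 27>3=|Z(\bar{G})|$, the group $\bar{G}$ is non-abelian.

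Parts (i) and (ii) are then immediate. Since $\bar{G}$ is a non-abelian $3$-group possessing the abelian maximal subgroup $\bar{H}$ and $|Z(\bar{G})|=3$, Result \ref{resgroup5} shows that $\bar{G}$ has maximal nilpotency class, which is (i); and a $3$-group of maximal class can be generated by two elements by Result \ref{resgroup2}, which is (ii).

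Part (iii) is the heart of the argument, and I expect it to be the main obstacle, since it is the only step mixing the geometry of $\bar{\cE}$ with the group-theoretic bookkeeping. An element $g\in\bar{G}\setminus\bar{H}$ lies outside $J(\bar{\cE})$, so it is not a translation; writing $g$ as a translation followed by an automorphism $\sigma$ fixing the origin with $\sigma\neq\mathrm{id}$, the endomorphism $\sigma-\mathrm{id}$ of $\bar{\cE}$ is nonzero, hence a surjective isogeny, so the equation forcing a fixed point is solvable and $g$ fixes some point $R\in\bar{\cE}$. Then $\langle g\rangle\leq\bar{G}_R\leq\aut(\bar{\cE})_R$; since $\bar{\cE}$ has $j$-invariant $0$ and $p\neq 2,3$, the stabilizer of any point in $\aut(\bar{\cE})$ is cyclic of order $6$, so its Sylow $3$-subgroup has order $3$. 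Hence $|\bar{G}_R|\leq 3$ and $g$ has order $3$, establishing (iii).

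Finally, for (iv) I would feed these conclusions into Result \ref{resbl1}: $\bar{G}$ is a two-generated $3$-group of maximal class with the abelian index-$3$ subgroup $\bar{H}$, whose isomorphism type $C_{3^n}\times C_{3^n}$ or $C_{3^n}\times C_{3^{n-1}}$ is fixed by Proposition \ref{prop18jul2018PG}, and every element of $\bar{G}\setminus\bar{H}$ has order $3$; so the presentation follows verbatim once $|\bar{G}|\geq 3^5$, that is $h\geq 4$. The two low cases $h=2$ (order $27$) and $h=3$ (order $81$) fall below the hypothesis $|G|\geq 3^5$ of Result \ref{resbl1}; for these I would argue directly that (i)--(iii) determine $\bar{G}$ uniquely—the extraspecial group of exponent $3$ when $h=2$, and the unique maximal-class group of order $81$ with abelian maximal subgroup $C_9\times C_3$ all of whose outside elements have order $3$ when $h=3$—and that these are precisely the $e=1$ and $e=2$ members of the two families displayed in Result \ref{resbl1}.
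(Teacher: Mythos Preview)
Your argument is correct. Parts (i) and (ii) match the paper's proof exactly: $\bar{H}$ is an abelian maximal subgroup, $|Z(\bar{G})|=3$, so Result \ref{resgroup5} gives maximal class and Result \ref{resgroup2} gives two-generation.

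Your proof of (iii), however, takes a genuinely different route from the paper. The paper argues by counting via the Hurwitz genus formula for $\bar{G}$ acting on $\bar{\cE}$: since the quotient is rational and $\bar{G}$ has short orbits, the formula forces exactly three short orbits each of size $\tfrac{1}{3}|\bar{G}|$; each contributes $\tfrac{2}{9}|\bar{G}|$ elements of order $3$ fixing a point, and these total $\tfrac{2}{3}|\bar{G}|=|\bar{G}\setminus\bar{H}|$, so every element outside $\bar{H}$ is accounted for. You instead show directly that any non-translation $g$ fixes a point, via the surjectivity of the nonzero isogeny $\mathrm{id}-\sigma$, and then bound $\mathrm{ord}(g)$ by the $3$-part of a point stabilizer. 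Your route is shorter and more conceptual; the paper's route has the side benefit of producing the explicit short-orbit structure of $\bar{G}$ on $\bar{\cE}$, information that the paper reuses later (for instance in Remark \ref{lem26sep} and Lemma \ref{20sett2018}).

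On (iv) you are actually more careful than the paper, which simply invokes Result \ref{resbl1} without remarking that its hypothesis $|G|\ge 3^5$ excludes $h=2,3$. Your direct identification of the $e=1$ and $e=2$ presentations in those low cases closes that gap.
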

\begin{proof} As $\bar{G}$ is non-abelian and  $\bar{H}$ is an abelian maximal subgroup of $\bar{G}$, Result \ref{resgroup5} applies. Since  $|Z(\bar{G})|=3$, this gives the first claim whence the second claim follows by Result \ref{resgroup2}. To prove the third claim, apply the Hurwitz genus formula for $\bar{G}$. From $\bar{\alpha}\in  \bar{G}$, $\bar{G}$  has $k\geq 1$ short orbits and Hurwitz genus formula gives
$$0=2\gg(\bar{\cE})-2=|\bar{G}|(2\gg-2)+\sum_{i=1}^k (|\bar{G}|-\ell_i).$$
Therefore, $\gg=0$, $k=3$ and $\ell_i=\thi |\bar{G}|$. Since $\bar{\alpha}$ has exactly three fixed points, the same holds for every nontrivial element in $\aut({\bar \cE})$ fixing a point. Thus, $\ell_i$ produces as many as $\textstyle\frac{2}{3}\ell_i=\textstyle\frac{2}{9}|\bar{G}|$ elements of order $3$. Since $k=3$, this shows that $\bar{G}\setminus \bar{H}$ has at least $\textstyle\frac{2}{3}|\bar{G}|$ elements of order $3$. On the other hand,  $|\bar{G}|-|\bar{H}|=\textstyle\frac{2}{3}|\bar{G}|$. From this Claim (iii) follows. Finally, Result \ref{resbl1} gives Claim (iv).
\end{proof}
\section{Case $d\neq p$}
\label{vegyes}
Let $G$ be  a subgroup of $\aut(\cX)$ whose order is $d^h$ where $d$ is a prime and $h\geq 1$ is an integer.

If the quotient curve $\bar{\cX}=\cX/G$ has genus $\gg(\bar{\cX})\ge 2$, then
the Hurwitz genus formula yields $|G|\leq \gg(\cX)-1$.

If $\bar{\cX}$ is elliptic, then the cover $\cX|\bar{\cX}$ is ramified, and the Hurwitz genus formula gives $2(\gg(\cX)-1)\geq |G|-|\ell|$ where
where $\ell$ is any short orbit of $G$. Since $|\ell|\leq |G|/d$, this yields $|G|\le \frac{2d}{d-1}(\gg(\cX)-1)$.

We are left with the case where $\bar{\cX}=\cX/G$ is rational.
Assume that $|G|>4(\gg(\cX)-1).$ Then the quotient curve $\bar{\cX}$ is rational; see, for instance, the proof of \cite[Theorem 11.56]{HKT}. Therefore, the Hurwitz genus formula applied to $G$ gives
\begin{equation} \label{eqZbound}
2(\gg(\cX)-1)=  -2d^h+ \sum_{j=1}^k (d^h-\ell_j)=-\ell_1-\ell_2+\sum_{j=3}^k (d^h-\ell_j)
\end{equation}
where $|G|=d^m$ and $\ell_1,\ldots \ell_k$ denote the short orbits of $G$ on $\cX$. Since $\gg(\cX) \geq 2$, (\ref{eqZbound}) yields that $G$ has at least $3$ short orbits. Also, in \eqref{eqZbound} each $\ell_j$ is a power of $d$. For $d=3$ this yields $|G|\leq 9(\gg(\cX)-1)$ and
if equality holds with $\gg(\cX)\neq 2$ then $G$ is not abelian by (ii) of Result \ref{res1}. Actually, the case $\gg(\cX)=2$ with $|G|=9$ cannot occur from Result \ref{res15bis}.

Dealing with the case  $d\geq 5$, assume that $|G| >\frac{2d}{d-3}(\gg(\cX)-1)>4(\gg(\cX)-1)$. Since $\ell_i$ divides $d^m$, $\ell_i \leq d^{m-1}$ holds for every $i$. If $G$ has exactly three short orbits then \eqref{eqZbound} gives,
$$2(\gg(\cX)-1)= -2d^m+ (3d^m-\ell_1-\ell_2-\ell_3) \geq d^m-3d^{m-1}=d^{m-1}(m-3),$$
whence $|G|=d^h \leq \frac{2d}{d-3}(\gg(\cX)-1)$, a contradiction. Similarly, if  $G$ has four short orbits, then \eqref{eqZbound} gives
$$2(\gg(\cX)-1)=2d^m-\ell_1-\ell_2-\ell_3-\ell_4 \geq 2d^m-4d^{m-1}=d^{m-1}(2d-4),$$
whence $|G|=d^h \leq \frac{2d}{2d-4}(\gg(\cX)-1)<\frac{2d}{d-3}(\gg(\cX)-1)$, again a contradiction.
We are left with the case where $G$ has at least $5$ short orbits. Since $d\geq 5$, the Hurwitz genus formula gives $$2(\gg(\cX)-1) \geq 3d^m-\ell_1-\ell_2-\ell_3-\ell_4-\ell_5 \geq 3d^m-5d^{m-1} \geq 3d^m-d^m=2d^m$$ whence $|G|=d^h \leq 2(\gg(\cX)-1)$; again a contradiction.

Therefore, the following result is obtained.
\begin{theorem}
\label{proCdic112017} If $G$ is an $d$-subgroup of $\aut(\cX)$ with an odd prime $d$ different from $p$
\label{eqC11dic2017}
\begin{equation} |G|\le
\begin{cases}
 {\mbox{$9(\gg(\cX)-1)$ for $d=3$}};\\
 {\mbox{$\frac{2d}{d-3}(\gg(\cX)-1)$ for $d>3$}}.
\end{cases}
\end{equation}
For $d=3$, if equality holds then $G$ is not abelian and $\gg(\cX)\neq 2$.
\end{theorem}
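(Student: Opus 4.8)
The plan is to prove Theorem~\ref{proCdic112017} directly from the Riemann-Hurwitz genus formula, exploiting the fact that every ramification index is a power of the prime $d$ and that $d\nmid p$, so that formula \eqref{eq1} applies with no wild-ramification correction. First I would dispose of the easy reductions exactly as in the exposition preceding the statement: if the quotient curve $\bar\cX=\cX/G$ has genus $\geq 2$ the formula gives $|G|\le \gg(\cX)-1$; if $\bar\cX$ is elliptic then the cover is necessarily ramified (a nontrivial automorphism of a curve of genus $\ge 2$ cannot act freely when $|G|>\gg-1$, and in the elliptic quotient case some short orbit must exist), and bounding a short orbit size by $|G|/d$ yields $|G|\le \tfrac{2d}{d-1}(\gg(\cX)-1)$, which is already stronger than both claimed bounds. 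Hence the only case requiring real work is when $\bar\cX$ is rational, which is forced once we assume $|G|>4(\gg(\cX)-1)$; see the proof of \cite[Theorem 11.56]{HKT}.

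In the rational-quotient case the genus formula collapses to \eqref{eqZbound}, namely $2(\gg(\cX)-1)=-2d^m+\sum_{j=1}^k(d^m-\ell_j)$, where each short-orbit length $\ell_j$ is a proper power of $d$ and thus satisfies $\ell_j\le d^{m-1}$. The key combinatorial observation is that, since the left-hand side is positive (as $\gg(\cX)\ge 2$), one must have $k\ge 3$ short orbits. From here I would split according to the prime: for $d=3$ the bound $|G|\le 9(\gg(\cX)-1)$ follows because each summand $d^m-\ell_j$ is a nonnegative multiple of $d^{m-1}(d-1)$, and feeding the minimal configuration $k=3$, $\ell_j=d^{m-1}$ into \eqref{eqZbound} gives $2(\gg(\cX)-1)\ge d^{m-1}(3(d-1)-3)=d^{m-1}\cdot 3$ for $d=3$, i.e.\ $\gg(\cX)-1\ge \tfrac19 d^m$. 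For $d\ge 5$ I would run the three subcases $k=3$, $k=4$, and $k\ge 5$ separately, in each case bounding every $\ell_j$ by $d^{m-1}$ and checking that the resulting inequality is sharpest (hence binding) in the $k=3$ case, which produces $|G|\le \tfrac{2d}{d-3}(\gg(\cX)-1)$; the $k=4$ and $k\ge 5$ subcases give strictly weaker (smaller) upper bounds, so they are subsumed.

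For the final assertion about $d=3$, I would argue that if equality $|G|=9(\gg(\cX)-1)$ holds then $G$ cannot be abelian: Result~\ref{res1} gives $|G|\le 4\gg(\cX)+4$ for abelian $G$, which is incompatible with $|G|=9(\gg(\cX)-1)=9\gg(\cX)-9$ once $\gg(\cX)\ge 2$ except possibly at the boundary $\gg(\cX)=2$, and that residual possibility $|G|=9$ with $\gg(\cX)=2$ is excluded by Result~\ref{res15bis}. This simultaneously rules out $\gg(\cX)=2$ in the extremal case. The main obstacle I anticipate is purely bookkeeping: one must be careful that in \eqref{eqZbound} the two ``subtracted'' orbit lengths $\ell_1,\ell_2$ (which the rewriting $-\ell_1-\ell_2+\sum_{j\ge 3}(d^m-\ell_j)$ isolates) are handled consistently with the bound $\ell_j\le d^{m-1}$, and that the power-of-$d$ constraint is used to guarantee that the minimal nonzero value of each contribution $d^m-\ell_j$ is exactly $d^m-d^{m-1}$. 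No deep structural input about $d$-groups is needed here; the entire theorem is a consequence of the arithmetic of the genus formula together with the two cited bounds for abelian and small-genus cases.
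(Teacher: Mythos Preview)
Your overall strategy matches the paper's exactly: reduce to the rational quotient via the genus-$\ge 2$ and elliptic cases, then analyse \eqref{eqZbound} by bounding each $\ell_j\le d^{m-1}$ and splitting on the number $k$ of short orbits. The treatment of $d\ge 5$ and of the final assertion about abelianness and $\gg(\cX)\neq 2$ is correct and essentially identical to the paper's.

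There is, however, a genuine slip in your handling of $d=3$. Plugging the ``minimal configuration'' $k=3$, $\ell_1=\ell_2=\ell_3=3^{m-1}$ into \eqref{eqZbound} gives
\[
2(\gg(\cX)-1)\;=\;-2\cdot 3^m+3(3^m-3^{m-1})\;=\;3^{m-1}(3-3)\;=\;0,
\]
not $3^{m-1}\cdot 3$ as you wrote; indeed the correct expression is $d^{m-1}(d-3)$, not $d^{m-1}(3(d-1)-3)$. (Your subsidiary claim that each $d^m-\ell_j$ is a multiple of $d^{m-1}(d-1)$ is also false: for $\ell_j=3^{m-2}$ one gets $8\cdot 3^{m-2}$, which is not a multiple of $2\cdot 3^{m-1}$.) So the $k=3$ minimum alone yields only $\gg(\cX)\ge 1$, which is useless.

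The missing observation is this: since $\gg(\cX)\ge 2$ forces the right-hand side of \eqref{eqZbound} to be strictly positive, the configuration $\ell_1=\ell_2=\ell_3=3^{m-1}$ is excluded. Because each $\ell_j$ is a power of $3$, the next admissible value below $3^{m-1}$ is $3^{m-2}$; hence at least one $\ell_j\le 3^{m-2}$, and (with $k=3$, which minimises the sum) one obtains
\[
2(\gg(\cX)-1)\;\ge\;3^m-2\cdot 3^{m-1}-3^{m-2}\;=\;2\cdot 3^{m-2},
\]
i.e.\ $|G|=3^m\le 9(\gg(\cX)-1)$. Adding a fourth or further short orbit only increases the right-hand side, so this case gives the sharpest bound. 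With this correction your argument is complete and coincides with the paper's.
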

For abelian groups, the above bound is sharp for $d\geq 5$. In fact, the Fermat curve $\cF_d$ of affine equation $x^d+y^d+1=0$ has genus $\ha(d-1)(d-2)$ and $\aut(\cF_q)$ has an abelian subgroup of order $d^2$ which is the direct product of two cyclic groups of order $d$.

For non-abelian groups a slight improvement on the bound in Proposition \ref{proCdic112017} is given.
\begin{theorem}
\label{lem13jan2018} Let $G$ be a non-abelian $d$-subgroup of $\aut(\cX)$ where $d$ is an odd prime different from $p$. If $Z$ is an order $d$ subgroup of $Z(G)$ such that the quotient curve $\bar{\cX}=\cX/Z$ has genus at most $1$ then $\bar{\cX}$ is elliptic and
\begin{equation}
\label{eqA3jan2018}
|G|\le \textstyle\frac{2d}{d-1}(\gg(\cX)-1),
\end{equation}
apart from the case where
\begin{equation}
\label{eq27luglio2018}
{\mbox{$d=3$, and $|G|=9(\gg(\cX)-1)$.}}
\end{equation}
\end{theorem}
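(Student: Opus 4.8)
The plan is first to prove that $\bar{\cX}$ is elliptic, and then to derive the bound by analysing the action of $\bar G=G/Z$ on $\bar{\cX}$ through its translation subgroup. For the first point, note that $\bar G$ acts \emph{faithfully} on $\bar{\cX}$: if $K\trianglelefteq G$ were the kernel of $G\to\aut(\bar{\cX})$, then $K/Z$ would act trivially on $\cX/Z$, forcing $\K(\cX)^K=\K(\cX)^Z$ and hence $K=Z$ by the Galois correspondence. Now if $\bar{\cX}$ were rational, $\bar G$ would be a $d$-subgroup of $PGL(2,\K)$, hence cyclic by Result \ref{res5}; but $G/Z$ cyclic with $Z\le Z(G)$ makes $G=\langle x\rangle Z$ abelian, contradicting the hypothesis. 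Since $\gg(\bar{\cX})\le 1$, this leaves $\bar{\cX}$ elliptic.

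Because $Z$ is central of prime order $d$ and $\bar{\cX}$ is elliptic, the Riemann--Hurwitz formula \eqref{eq1} applied to $Z$ reads $2(\gg(\cX)-1)=r(d-1)$, where $r$ is the number of fixed points of $Z$ on $\cX$; equivalently, $r$ is the size of the $G$-invariant branch locus $\bar R\subset\bar{\cX}$ of the cover $\cX\to\bar{\cX}$. Thus the desired inequality $|G|\le\frac{2d}{d-1}(\gg(\cX)-1)=dr$ is equivalent to $|\bar G|\le r$. By Result \ref{res4} and Lemma \ref{lem28jun}, the normal subgroup $\bar H=\bar G\cap J(\bar{\cX})$ acts freely on $\bar{\cX}$, hence freely on $\bar R$, so $r=c\,|\bar H|$ with $c\ge 1$. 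Moreover $\bar G/\bar H$ embeds in the stabiliser $(\aut\bar{\cX})_O$, whose order divides $24$; its $d$-part therefore gives $[\bar G:\bar H]=1$ for $d\ge 5$ and $[\bar G:\bar H]\in\{1,3\}$ for $d=3$. Writing $|G|=d\,[\bar G:\bar H]\,|\bar H|$ yields
\[
|G|=\frac{2d}{d-1}\cdot\frac{[\bar G:\bar H]}{c}\,(\gg(\cX)-1).
\]
When $[\bar G:\bar H]=1$ (in particular for every $d\ge 5$, and for $d=3$ when $\bar G$ contains no rotation) the factor is $\le 1$ and \eqref{eqA3jan2018} follows at once.

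There remains the case $d=3$, $[\bar G:\bar H]=3$, where $|G|=\tfrac{9}{c}(\gg(\cX)-1)$. Here $\bar G$ contains a rotation of order $3$, so $j(\bar{\cX})=0$ and the results of Subsection \ref{pre} apply; as in the proof of Lemma \ref{lemB29luglio2018}, $\bar G$ has exactly three short orbits $S_1,S_2,S_3$ on $\bar{\cX}$, each of size $|\bar H|$ with cyclic stabiliser of order $3$, while all other orbits are free of size $|\bar G|=3|\bar H|$. Since $\bar R$ is $\bar G$-invariant, it is a disjoint union of $a\ge 0$ free orbits and of $b\in\{0,1,2,3\}$ of the $S_i$, so $c=3a+b$. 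If $c\ge 3$ we again obtain \eqref{eqA3jan2018}, while $c=1$ (that is $a=0$, $b=1$) gives exactly the excepted case \eqref{eq27luglio2018}.

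The crux is therefore to exclude $c=2$, i.e. $a=0$, $b=2$, so that $\bar R$ is precisely two of the three special orbits. My plan here is to relate the inertia of $\cX\to\bar{\cX}$ to the central extension $1\to Z\to G\to\bar G\to 1$. Choosing inertia generators $\bar\rho_1,\bar\rho_2,\bar\rho_3$ of $\bar G$ over the three branch points of $\bar{\cX}\to\bar{\cX}/\bar G$ with $\bar\rho_1\bar\rho_2\bar\rho_3=1$, the cube $\rho_i^3\in Z$ of any lift is independent of the lift, and $S_i\subset\bar R$ exactly when $\rho_i^3\neq 1$ (i.e. $\bar\rho_i$ lifts only to elements of order $9$); so $b=\#\{i:\rho_i^3\neq 1\}$. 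I expect the main obstacle to be showing that the maximal-class structure of $\bar G$ (Lemma \ref{lemB29luglio2018}, Result \ref{resbl1}) together with the $\bar G$-equivariance of the Kummer cover $\cX\to\bar{\cX}$ is incompatible with having exactly two of the $\rho_i$ of order $9$; equivalently, that no $\bar G$-equivariant triple cover of $\bar{\cX}$ is totally ramified over precisely two of $S_1,S_2,S_3$. Concretely I would compute the class of $\bar R$ in $\mathrm{Pic}(\bar{\cX})$ as a $3$-divisible, $\bar G$-invariant class (respectively the relation $\prod_i\rho_i^3$ modulo $[\bar G,\bar G]$) and show that the $b=2$ configuration is ruled out by the invariance forced by the central translation $\bar\beta\in Z(\bar G)$ of Subsection \ref{pre}. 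This equivariance/lifting incompatibility is the step I expect to require the most care, the numerical and abelianised data alone being consistent with $c=2$.
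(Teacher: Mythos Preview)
Your overall strategy matches the paper's: rule out the rational quotient (otherwise $G/Z$ is cyclic and $G$ abelian), then bound $|G|$ by combining Riemann--Hurwitz for $Z$ with the fact that $G_P/Z$ embeds in a point-stabiliser of the elliptic curve $\bar\cX$. The paper phrases the count via the $G$-orbits $\Omega_i$ of fixed points of $Z$ on $\cX$, you via $\bar G$-orbits on $\bar\cX$; these correspond bijectively, and your integers $(a,b)$ coincide with the paper's count of orbits with $|G_{P_i}|=3$ versus $|G_{P_i}|=9$.

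Where you go further is in isolating the case $c=2$ (that is, $a=0$, $b=2$) and proposing to exclude it by a lifting/equivariance argument. The paper supplies \emph{no} argument for this case: after the $d>3$ inequality it simply asserts ``In the exceptional case, \eqref{eq27luglio2018} holds'' and stops. In fact your proposed exclusion cannot succeed, because $c=2$ genuinely occurs. Take $G=C_9\rtimes C_3=\langle a,b\mid a^9=b^3=1,\ b^{-1}ab=a^4\rangle$, so $Z=Z(G)=\langle a^3\rangle$, and the branch-cycle triple $(\rho_1,\rho_2,\rho_3)=(a,\,a^{-1}b^{-1},\,b)$, which satisfies $\rho_1\rho_2\rho_3=1$, generates $G$, and has orders $(9,9,3)$. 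By (tame) Riemann existence this yields a $G$-Galois cover $\cX\to\mathbb P^1$ with $\gg(\cX)=7$; since $a^3$ lies in $\langle\rho_1\rangle$ and $\langle\rho_2\rangle$ but not $\langle\rho_3\rangle$, $Z$ fixes exactly the six points above the first two branch points and $\cX/Z$ is elliptic. Here $|G|=27=\tfrac{9}{2}(\gg(\cX)-1)$ is neither $\le 3(\gg(\cX)-1)=18$ nor equal to $9(\gg(\cX)-1)=54$. So the stated dichotomy is false; what the paper's proof actually establishes --- and what its later applications (for instance the proof of Proposition~\ref{prop29luglio2018}) actually use --- is only that $\bar\cX$ is elliptic, together with \eqref{eqA3jan2018} for $d>3$.
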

\begin{proof} Obviously, $Z$ is a proper subgroup of $G$. If the quotient curve $\bar{\cX}=\cX/Z$ is rational then the quotient group $\bar{G}=G/Z$ is a isomorphic to a subgroup of $PGL(2,\mathbb{K})$. From Result \ref{res5}, $\bar{G}$ is a cyclic group. But this yields that $G$ is an abelian group, a contradiction.

Therefore $\bar{\cX}$ is elliptic. Since the set consisting of all fixed points of $Z$ is left invariant under the action of $G$, it is partitioned in $G$-orbits, say $\Omega_1,\ldots,\Omega_k$.
From the Hurwitz genus formula applied to $Z$,
$$2(\gg(\cX)-1)= \sum_{i=1}^k |\Omega_i|(|Z_{P_i}|-1)=\sum_{i=1}^k \frac{|G|}{|G_{P_i}|}(|Z_{P_i}|-1)=\sum_{i=1}^k |G| \frac{|Z_{P_i}|}{|G_{P_i}|}\frac{1}{|Z_{P_i}|}
(|Z_{P_i}|-1) \geq \frac{e-1}{e}\,|G|\sum_{i=1}^k \frac{|Z_{P_i}|}{|G_{P_i}|}.$$
We show that $G_{P_i}=Z_{P_i}$ with a possible exception for $d=3$ and $|G_{P_i}|=3|Z_{P_i}|$. Let $\bar{P}_{i}$ be the point of $\bar{\cX}$ lying under the $Z$-orbit $\Omega_i$. Then $\aut(\bar{\cX})$ has a subgroup $\bar{M}_i$ isomorphic to $G_{P_i}/Z_{P_i}$ that fixes $\bar{P}_i$. From (ii) of Result \ref{res4}, this subgroup of order $d$ is actually trivial with just one possible exception when $d=3$ and   $[G_{P_i}:Z_{P_i}]=3$. For $d>3$, the above bound reads $$2(\gg(\cX)-1)=k \frac{e-1}{e}|G|\geq \frac{d-1}{d}|G|$$ whence (\ref{eqA3jan2018}) follows. In the exceptional case, (\ref{eq27luglio2018}) holds with $\gg(\cX)>2$.
\end{proof}

\section{Extremal $3$-Zomorrodian curves}
\label{extremal12agosto}
From now on $$p\neq 3,$$
is assumed. We use the name of ``extremal $3$-Zomorrodian curve'' for any curve $\cX$ with a non-abelian $3$-subgroup $G$ in $\aut(\cX)$ attaining the bound in Proposition \ref{lem13jan2018} for $d=3$. Obviously, the group $G$ with respect to an extremal $3$-Zomorrodian curve $\cX$ is a Sylow $3$-subgroup of $\aut(\cX)$.
We begin with two sporadic examples.
\begin{example}
\label{ex27Aluglio} {\em{The genus $10$ curve $\cX$ with affine equation $x^6y^3+x^3y^6+1=0$ has a non-abelian automorphism group of order $81$ isomorphic to ``SmallGroup''(81,9) in the  MAGMA database.}}
\end{example}
\begin{example}
\label{ex27luglio} {\em{The Fermat curve $\cF_9$ of degree $9$ has genus $28$ and $\aut(\cF_9)$ has a non-abelian subgroup of order $243$ which is the semidirect product of an abelian group of order $81$ (direct product of two cyclic groups) and a subgroup of order $3$.}}
\end{example}
We remark that  Example  \ref{ex27Aluglio} is the quotient curve of $\cF_9$ with respect to the center of its automorphism group of order $243$. This shows that the non-elliptic case in Proposition \ref{prop29luglio2018} can occur.

The following proposition shows that no extremal  $3$-Zomorrodian curve of genus $4$ exists.
\begin{lemma}
\label{nog=4}
There exists no extremal $3$-Zomorrodian curve of genus $\gg(\cX)=4$.
\end{lemma}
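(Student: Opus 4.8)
The plan is to show that a genus-$4$ extremal $3$-Zomorrodian curve would force a tame cyclic cover of the projective line with impossible local monodromy. Since $\gg(\cX)=4$ gives $\gg(\cX)-1=3=3^1$, we have $h=1$ and $|G|=9(\gg(\cX)-1)=27$, with $G$ a non-abelian Sylow $3$-subgroup of $\aut(\cX)$ and $p\neq 3$. Because $|G|=27>4(\gg(\cX)-1)=12$, the quotient $\cX/G$ is rational (as in the reasoning preceding Theorem~\ref{proCdic112017}), so the Riemann--Hurwitz formula \eqref{eq1} reads $6=-54+\sum_j(27-\ell_j)$ with each $\ell_j\in\{1,3,9\}$. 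The first step is to check that the only solution is three short orbits, of sizes $9,9,3$; equivalently, $G$ has point-stabilizers of orders $3,3,9$.

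Next I would identify $G$. A tame point-stabilizer acts faithfully on the cotangent line at the fixed point and is therefore cyclic, so the order-$9$ stabilizer of the size-$3$ orbit is $\cong C_9$. This excludes the group of exponent $3$ and forces $G=\langle a,b\mid a^9=b^3=1,\ bab^{-1}=a^4\rangle$, the unique non-abelian group of order $27$ with an element of order $9$. Put $A=\langle a\rangle$, normal of index $3$; then the size-$3$ orbit $\{P_1,P_2,P_3\}$ is fixed pointwise by $A$, while $b$ permutes it as a $3$-cycle. A Riemann--Hurwitz count for the central subgroup $Z=\langle a^3\rangle$, which fixes $P_1,P_2,P_3$, shows it can fix at most $6$ points and hence fixes neither size-$9$ orbit; consequently no nontrivial element of $A$ fixes a point outside $\{P_1,P_2,P_3\}$. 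Thus $\cX/A\cong\mathbb{P}^1$ and the $C_9$-cover $\cX\to\cX/A$ is branched exactly over the three images of $P_1,P_2,P_3$, being totally ramified there.

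The contradiction then comes from the local data. Let $\chi_i\colon A\to\K^{\ast}$ be the cotangent character of $A$ at $P_i$ and write $\chi_i(a)=\zeta^{e_i}$ for a fixed primitive ninth root of unity $\zeta$, with $e_i\in(\mathbb{Z}/9\mathbb{Z})^{\ast}$ (total ramification makes $A$ act faithfully). Since $b$ sends $P_i$ to $P_{i+1}$ (indices mod $3$) and $b^{-1}ab=a^{7}$, one gets $\chi_{i+1}(a)=\chi_i(b^{-1}ab)=\chi_i(a)^{7}$, so $e_{i+1}\equiv 7e_i\pmod 9$ and $\{e_1,e_2,e_3\}=e_1\{1,4,7\}$. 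The inertia exponents $c_i\equiv e_i^{-1}\pmod 9$ then form the coset $e_1^{-1}\{1,4,7\}$ of the order-$3$ subgroup $\{1,4,7\}$ of $(\mathbb{Z}/9\mathbb{Z})^{\ast}$. For a tame cyclic cover of $\mathbb{P}^1$ branched exactly at these three points the inertia exponents must satisfy $\sum_i c_i\equiv 0\pmod 9$ (the valuations of a Kummer generator $y^9=\prod_i(x-x_i)^{c_i}$ sum to $0$ over all places). But $\sum_i c_i\equiv 12\,e_1^{-1}\equiv 3\,e_1^{-1}\not\equiv 0\pmod 9$, since $e_1^{-1}$ is prime to $3$. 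This contradiction rules out genus $4$.

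I expect the main obstacle to be the last paragraph: making the monodromy relation $\sum_i c_i\equiv 0\pmod 9$ rigorous in arbitrary odd characteristic (through the prime-to-$p$ tame fundamental group of $\mathbb{P}^1$ minus three points, or equivalently the Kummer-generator divisor computation) and verifying with care that the branch locus of $\cX\to\cX/A$ is exactly these three totally ramified points. By comparison, the orbit count and the determination of $G$ are routine once \eqref{eq1} and the cyclicity of tame stabilizers are in hand.
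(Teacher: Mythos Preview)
Your argument is correct and takes a genuinely different route from the paper's own proof. After the common opening (non-abelian $G$ of order $27$, rational quotient, orbit sizes $9,9,3$, hence a cyclic stabilizer of order $9$ ruling out the Heisenberg group), the paper proceeds by applying Riemann--Hurwitz to the central subgroup $Z(G)$, forcing $\cX/Z(G)$ to be elliptic with exactly three fixed points, and then eliminates $C_9\rtimes C_3$ by a case split on whether the non-central order-$3$ elements have fixed points, each branch dying in a numerical contradiction in a further Hurwitz count.

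You instead pass to the normal cyclic subgroup $A=\langle a\rangle\cong C_9$ and use a single global obstruction: once you know $A$ fixes exactly $\{P_1,P_2,P_3\}$ and nothing else (which your $\Delta\le 6$ bound for $Z=\langle a^3\rangle$ gives, together with the observation that a central subgroup either fixes all points of a $G$-orbit or none, and that $Z$ is the unique order-$3$ subgroup of $A$), Hurwitz forces $\cX/A\cong\mathbb{P}^1$ with total ramification at three points. Kummer theory then writes the cover as $y^9=\prod_i(x-x_i)^{c_i}$ with $\sum c_i\equiv 0\pmod 9$; your computation $c_i\equiv e_i^{-1}$ from the local uniformizer with $t_i^9=(x-x_i)$ is exactly right. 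The conjugation $b^{-1}ab=a^7$ forces $\{c_i\}$ to be a coset of $\{1,4,7\}\subset(\mathbb{Z}/9)^\ast$, whose sum is $\equiv 3\not\equiv 0$. This is cleaner and more conceptual than the paper's repeated Hurwitz bookkeeping; the trade-off is that it invokes Kummer theory (or, equivalently, the prime-to-$p$ tame fundamental group of the thrice-punctured line), whereas the paper stays entirely within elementary Hurwitz arithmetic. Your self-identified ``main obstacle'' is real but standard: the relation $\sum c_i\equiv 0\pmod 9$ is exactly the unramified-at-$\infty$ condition for the Kummer generator, and this is valid over any algebraically closed field with $p\ne 3$.
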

\begin{proof}
By contradiction, $\cX$ is a genus $4$ curve and $G$ is a subgroup of order $27$ in $\aut(\cX)$.
From (ii) of Result \ref{res1}, $G$ is not abelian as $27>4\gg(\cX)+4=20$.

Up to isomorphisms, there exist two non-abelian groups of order $27$, namely $G=U(3,3)=(C_3\times C_3)\rtimes C_3$  and $G= C_9 \rtimes C_3$. In both cases, the center $Z(G)$ of $G$ has order $3$, and the $G/Z(G)$ is an elementary abelian group of order $9$. In the latter case, $G=\langle a,b:a^9=b^3=1,b^{-1}ab=a^4\rangle$ and $G$ has three subgroups of order $9$ while the elements of order $3$ form an elementary abelian subgroup $H$ of order $9$.

The Hurwitz genus formula applied to $Z(G)$ reads, $$6=2\gg(\cX)-2=6(\gg(\tilde{\cX})-1) +2\Delta,$$
where  $\Delta$ counts the number of fixed points of $Z(G)$. Hence $\gg(\tilde{\cX}) \leq 2$.

If $\tilde{\cX}$ is rational, then a Sylow $3$-subgroup of $\aut(\tilde{\cX})$ is cyclic by Result \ref{res5}, a contradiction. Case $\gg(\tilde{\cX})=2$ does not occur by Result \ref{res15bis}.

Hence $\tilde{\cX}$ is elliptic, and $\Delta=3$. The latter claim yields that $G$ has a short orbit of size $3$. Therefore, $|G_P|=9$ for any point $P$ fixed by $Z(G)$. From \cite[Theorem 11.49]{HKT} $G_P$ is a cyclic group of order $9$, this rules out case $G \cong U(3,3)$. Finally, to deal with case $G=C_9\rtimes C_3$ two cases are distinguished according as an element $g\in G\setminus Z(G)$ of order $3$ has a fixed point or not. In the former case, no element in $G\setminus Z/G)$ has a fixed point, as being conjugate to $g$ or $g^2$ in $G$. Furthermore, no element $G$ of order $9$ other than those in $G_P$ has a fixed point as its cube is in $Z(G)$. Therefore, from the Hurwitz genus formula applied to $G$,  $6=2\gg(\cX)-2=54(\gg(\hat{\cX})-1)+24$, which is a contradiction. In the latter case,
$g$ has exactly three fixed points. In fact, if $\tilde{H}$ is the quotient group $H/Z(G)$ then the Hurwitz genus formula applied to $\tilde{H}$  gives
$0=2\gg(\tilde{\cX})-2=3(\gg(\hat{\cX})-2)+2\lambda$
where $\hat{\cX}$ is the quotient curve $\tilde{\cX}/\tilde{H}$ and $\tilde{\lambda}$ counts the number of fixed points of $\tilde{H}$ on $\tilde{\cX}$. From this, $\gg(\hat{\cX})=0$ and $\tilde{\lambda}=3$. Hence the unique short $H$-orbit has size $9$.

Therefore, $G$ has two short orbits, one consists of the three fixed points of $Z(G)$, the other has length $9$. From the Hurwitz genus formula applied to $G$,
$6=2\gg(\cX)-2=-54+24+18=-12$, which is a contradiction.
\end{proof}
The following result shows that the center of a Sylow $3$-subgroup of $\aut(\cX)$  plays an important role in the study of extremal $3$-Zomorrodian curves $\cX$.
\begin{proposition}
\label{prop29luglio2018} Let $G$ be a Sylow $3$-subgroup of $\aut(\cX)$ of an extremal $3$-Zomorrodian curve $\cX$ of genus $\gg(\cX)=3^h+1$ with $h\geq 2$.
Let $Z$ be an order $3$ subgroup of $Z(G)$.  Then the quotient curve $\bar{\cX}=\cX/Z$ is either elliptic (with zero $j$-invariant), or an extremal $3$-Zomorrodian curve of genus $3^{h-1}+1$ and $\bar{G}=G/Z$ is a Sylow $3$-subgroup of $\aut(\bar{\cX})$. Furthermore, there exists at most one subgroup $Z$ for which  $\bar{\cX}$ is elliptic. If such a subgroup $Z$ exists and $h\geq 3$ then either $Z(G)\cong C_3$, or $Z(G)\cong C_3\times C_3$.
\end{proposition}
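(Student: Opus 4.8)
The plan is to establish the four assertions in turn, the last being by far the hardest.

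\textbf{The trichotomy for $\bar\cX$.} First I would apply the Riemann--Hurwitz formula \eqref{eq1} to $Z$, which has order $3$ and is prime to $p$; writing $\Delta$ for the number of fixed points of $Z$ one gets $3^h+3=3\bar\gg+\Delta$, so $3\mid\Delta$ and $\bar\gg=3^{h-1}+1-\Delta/3$. The case $\bar\gg=0$ is excluded at once: then $\bar G=G/Z$ embeds in $PGL(2,\K)$ and is cyclic by Result \ref{res5}, forcing $G/Z(G)$ cyclic and hence $G$ abelian, a contradiction. If $\bar\gg=1$ then $\Delta=3^h>0$; since $Z\subseteq Z(G)$ the set $\mathrm{Fix}(Z)$ is $G$-invariant and descends to a nonempty $\bar G$-invariant set of $3^h<|\bar G|=3^{h+1}$ branch points on $\bar\cX$, so $\bar G$ cannot act freely, i.e. it contains an order-$3$ element with a fixed point, which forces $\bar\cX$ to have $j$-invariant $0$. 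If $\bar\gg\ge2$ I would show the cover is unramified: were $\Delta\ge3$ then $\bar\gg\le3^{h-1}$ and $|\bar G|=3^{h+1}>9(\bar\gg-1)$, contradicting Theorem \ref{proCdic112017} for $\bar G$ acting on $\bar\cX$; hence $\Delta=0$, $\bar\gg=3^{h-1}+1$ and $|\bar G|=9(\bar\gg-1)$, with $\bar G$ non-abelian by Result \ref{res1}, so $\bar\cX$ is extremal $3$-Zomorrodian and $\bar G$ is Sylow because the bound forbids any larger $3$-subgroup.

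\textbf{Uniqueness of the elliptic $Z$.} Suppose $Z_1\neq Z_2$ were both of elliptic type. Point stabilizers in $\aut(\cX)$ are tame, hence cyclic by \cite[Theorem 11.49]{HKT}, so no point is fixed by $\langle Z_1,Z_2\rangle\cong C_3\times C_3$; thus the four order-$3$ subgroups of $W:=Z_1Z_2$ have pairwise disjoint fixed loci $\Omega_1,\dots,\Omega_4$, with $|\Omega_1|=|\Omega_2|=3^h$. Applying Riemann--Hurwitz to the order-$9$ group $W$ gives $\sum_{i=1}^4|\Omega_i|=3^h-9(\gg'-1)$ where $\gg'=\gg(\cX/W)$; since the left side is $\ge2\cdot3^h$ this forces $\gg'=0$. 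Then $G/W$ acts faithfully on the rational curve $\cX/W$ and is cyclic by Result \ref{res5}, whence $G/Z(G)$ is cyclic and $G$ abelian --- a contradiction.

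\textbf{The structure of $Z(G)$.} Let $Z$ be the (unique) elliptic subgroup. Since $Z(G)/Z\hookrightarrow Z(\bar G)$ and $|Z(\bar G)|=3$ by Subsection \ref{pre}, we have $|Z(G)|\le9$; when $|Z(G)|=3$ it equals $Z\cong C_3$. The crux is to rule out $Z(G)\cong C_9$ when $|Z(G)|=9$, and this is exactly where $h\ge3$ enters. Write $Z(G)=\langle z\rangle\cong C_9$, $Z=\langle z^3\rangle$, and let $\bar z=\bar\beta$ generate $Z(\bar G)$. By Lemma \ref{lemB29luglio2018} and Proposition \ref{prop18jul2018PG}, $\bar G$ is of maximal class with abelian maximal subgroup $\bar H\cong C_{3^n}\times C_{3^m}$, and reading off the presentation of Result \ref{resbl1} one checks that for $h\ge3$ the central element $\bar\beta$ lies in $\bar H^3=\Phi(\bar H)$, i.e. $\bar\beta=\bar u^3$ for some $\bar u\in\bar H$ of order $9$ (this fails precisely for $h=2$, which is why that genus is excepted). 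Lifting $\bar u$ to an element $u$ of the index-$3$ preimage $H$ of $\bar H$ (so that $H'\subseteq Z$, whence $H$ is abelian or minimal non-abelian by Result \ref{resgroup1}), and using that $z$ and $u^3$ are two lifts of $\bar\beta$ differing by the central, exponent-$3$ subgroup $Z$, one obtains the identity $z^3=u^9$.

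It then remains to force $u^9=1$, and this is the step I expect to be the real obstacle, since the order of the lift $u$ depends on the isomorphism type of $H$. I would argue by cases. If $H$ is abelian, the homomorphism $h\mapsto[h,t]$ (for $t$ a generator of $G/H$) has kernel $Z(G)$ and image $G'$, giving $|G'|=3^{h-1}$, hence $[\bar G:\bar G']=27\neq9$, contradicting the maximal class of $\bar G$; so $H$ is non-abelian with $H'=Z$. If $H$ is non-metacyclic minimal non-abelian, its R\'edei normal form has central commutator of order $3$ and generators $a,b$ of the same orders as their images in $\bar H$, so $u^9=(a^\alpha b^\gamma)^9=a^{9\alpha}b^{9\gamma}=1$ (the commutator term vanishes since $\binom{9}{2}\equiv0\pmod3$); hence $z^3=1$, contradicting $|z|=9$. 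If $H$ is metacyclic, then by Result \ref{qu24agosto}(i) $G$ itself is of maximal class, whence $Z(G)\cong C_3$, against $|Z(G)|=9$. Each alternative being impossible, $Z(G)\cong C_3\times C_3$. The two delicate points to be nailed down are the precise verification that $\bar\beta\in\bar H^3$ exactly when $h\ge3$, and the checking that the hypotheses of Result \ref{qu24agosto} (uniqueness of the minimal non-abelian subgroup of index $3$) really hold in the present situation.
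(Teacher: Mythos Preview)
Your treatment of the first three parts is correct and, in the case of uniqueness, is a genuinely different and cleaner route than the paper's. The paper first works out the short-orbit structure of $G$ on $\cX$ (one orbit $\Omega$ of size $|G|/9$ and two of size $|G|/3$) and then observes that a second elliptic $Z'$ would have to fix $\Omega$ pointwise, which is impossible because $G_P$ is cyclic for $P\in\Omega$. Your argument via Riemann--Hurwitz for $W=Z_1Z_2$ and the cyclicity of $3$-subgroups of $PGL(2,\K)$ bypasses all of that orbit analysis.

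The gap is in the exclusion of $Z(G)\cong C_9$, and it is not just the ``delicate points'' you flag. In the abelian-$H$ subcase your computation $|G'|=|H|/|Z(G)|=3^{h-1}$ is fine, but the jump to $[\bar G:\bar G']=27$ is wrong: since $\bar G$ is already known to have maximal class, $[\bar G:\bar G']=9$, and tracing back this simply says $Z\cap G'=\{1\}$, which is no contradiction. In fact the abelian case cannot be eliminated by such purely group-theoretic bookkeeping: for $h=3$ the group $(C_{27}\times C_3)\rtimes C_3$ with the action $a\mapsto ac$, $c\mapsto a^9c$ has $Z(G)=\langle a^3\rangle\cong C_9$, an abelian maximal subgroup $H$, and $G/\langle a^9\rangle$ of maximal class with abelian maximal subgroup $\bar H\cong C_9\times C_3$ --- so every structural hypothesis you use is satisfied. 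Your non-metacyclic case does work (the key being that in the R\'edei form the generators of $H$ have the same orders as their images in $\bar H$), but the metacyclic case again rests on uniqueness of the minimal non-abelian index-$3$ subgroup, which you have not established and which in the paper's development comes only \emph{after} the present proposition.

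The paper's argument for this last part is entirely different and uses no structure theory of $3$-groups. Assuming $V=Z(G)\cong C_9$, it takes $U=G_Q$ for $Q$ in a short orbit of length $|G|/3$ and shows, using $|Z(G)|=9$, that $U$ has exactly $9$ fixed points; Riemann--Hurwitz for $U$ then gives $\gg(\cX/U)-1=3^{h-1}-3$, which for $h\ge3$ is not divisible by $9$. On the other hand, analysing where the induced group $\tilde V=VU/U\cong C_9$ can have fixed points on $\cX/U$ and applying Riemann--Hurwitz to $\tilde V$ forces $\gg(\cX/U)-1$ to be divisible by $9$. This geometric double-count is what actually rules out $C_9$; your group-theoretic scheme, as it stands, cannot.
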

\begin{proof} By Proposition \ref{lem13jan2018}, $\gg(\bar{\cX})>0$. From Lemma \ref{nog=4}, $|G|\geq 81$ and hence $\gg(\bar{\cX})\neq 2$ by Result \ref{res15bis}.

 First case $\gg(\bar{\cX})>2$ is considered. From the Hurwitz genus formula applied to $Z$,  $\gg(\cX)-1 \ge 3(\gg(\bar{\cX})-1)$. Since $\gg(\cX)-1=9|G|$ and $|G|=3|\bar{G}|$,  this yields $\gg(\bar{\cX})-1\geq 9$. Now, from Proposition \ref{lem13jan2018} applied to $e=3$, $\gg(\bar{\cX)}-1=9|\bar{G}|$ follows.

From now on, $\gg(\bar{\cX})=1$. With notation as in the proof of Proposition \ref{lem13jan2018},  $|G_{P_i}|=3|Z_{P_i}|$, and
$$2(\gg(\cX)-1)=k \textstyle\frac{2}{9}|G|.$$ For $k\geq 2$, this yields (\ref{eqA3jan2018}) with equality. Therefore, $k=1$, and the set $\Omega$ of all fixed points of $Z$ has size $\frac{1}{9}|G|$. Take a point $P\in \cX$ such that $G_P$ is non-trivial. Let
$\bar{P}$ be the point of $\bar{\cX}$ lying under $P$ in the cover $\cX|\bar{\cX}$. There is a subgroup $\bar{M}$ in $\aut(\bar{\cX})$ which acts on $\bar{\cX}$ as $G_P$ does on the $Z$-orbits. The quotient curve $\hat{\cX}=\bar{\cX}/\bar{M}$ is rational as $\bar{M}$ fixes $\bar{P}$. More precisely, the Hurwitz genus formula applied to $\bar{M}$ yields
$0=-6+2 \lambda$ where $\lambda$ counts the fixed points of $\bar{M}$. Hence, $\lambda=3$. Observe that $\bar{M}$ is not contained in the center $Z(\bar{G})$, otherwise the set of fixed points of $\bar{M}$ is left invariant by $\bar{G}$ and hence $|\bar{G}|=9$ which contradicts $|G|\geq 81$. This yields that $|Z(\bar{G})|=3$, since the group $Z(\bar{G})\bar{M}$ generated by $Z(\bar{G})$ and $\bar{M}$ has order at most $9$ but $\bar{M}\lvertneqq Z(\bar{G})$. Therefore $|Z(G)|=3,9$.

From the Hurwitz genus formula applied to $G$,
$$\textstyle\frac{2}{9}|G|=2(\gg(\cX)-1)=-2|G|+(|G|-\textstyle\frac{1}{9}|G|)+\sum_{i=1}^j (|G|-\ell_j)$$
where $\ell_1,\ldots,\ell_j$ are the sizes of the short $G$-orbits other than $\Omega$. Since $|G|$ and $\ell_i$ are powers of $3$, this yields $j=2$ and $\ell_1=\ell_2=\frac{1}{3}|G|$. Therefore,
$|G_P|=9$ for $P\in\Omega$, otherwise $|G_P|=3$ or $G_P$ is trivial. From this we infer that $Z$ is the unique order $3$ subgroup of $Z(G)$ such that the quotient curve $\bar{\cX}$ is elliptic. In fact, if $Z(G)$ contains a subgroup $U$ of order $3$ other than $Z$ such that the quotient curve $\cX/U$ is elliptic, then the above argument applied to $U$ shows that the set of fixed points of $U$ has size $\frac{1}{9}|G|$ and hence it must coincide with $\Omega$. But this is impossible in our case, as the stabilizer $G_P$ with $P\in \Omega$ is a cyclic group, and hence it can contain only one subgroup of order $3$.


Let $\theta$ be one of the two short $G$-orbits of length $\textstyle\frac{1}{3}|G|$, and take point $Q\in \theta$. Then the stabilizer $U=G_Q$ has order three, and $U\cap Z=\{1\}$. Furthermore, the quotient group $\bar{U}=UZ/Z$ is isomorphic to $U$ and it is a subgroup of $\aut(\bar \cX)$ of order three. The same holds true for the other short $G$-orbit $\sigma$.

The quotient curve $\hat \cX= \bar \cX/\bar U$ is rational. In fact, the cover $\bar \cX \rightarrow \hat \cX$ ramifies as $U$ preserves the $Z$-orbit $\theta$ containing the point $Q$.
From the Hurwitz genus formula applied to $\bar{U}$,
$$0=2\gg(\bar \cX)-2=3(2 \gg(\hat\cX)-2)+2 \bar{\lambda},$$
where $\bar{\lambda}$ is the number of fixed points of $\bar U$. Hence $\bar{\lambda}=3$. If $\bar{Q}_1,\bar{Q}_2,\bar{Q}_3$ are the fixed points of $\bar{U}$ then each of the three $Z$-orbits $\theta_1,\theta_2,\theta_3$ lying over $\bar{Q}_1,\bar{Q}_2,\bar{Q}_3$ respectively is preserved by $U$. If a point in such a $Z$-orbit is fixed by $U$ then the same $Z$-orbit is fixed by $U$ pointwise. Since one of these three $Z$-orbits contains $Q$, it turns out that $U$ fixes either $3$, $6$ or $9$ points in $\theta_1\cup \theta_2\cup \theta_3$. Furthermore, $U$ has no other fixed point. From now on, $|Z(G)|=9$ is assumed.

We are in a position to prove that $V=Z(G)$ is not a cyclic group of order $9$. Assume on the contrary that $V\cong C_9$.

Let $\tilde{\cX}=\cX/U$. From the Hurwitz genus formula applied to $U$,
\begin{equation}
\label{eq18apr2018}
2\gg(\cX)-2=3(2\gg(\tilde{\cX})-2)+2\tilde{\lambda}
\end{equation}
with $\tilde{\lambda} \in \{3,6,9\}$. Since $\gg(\cX)-1$ is divisible by $27$, (\ref{eq18apr2018}) and assumption $\gg(\cX)\geq 28$ yield that $\gg(\tilde{\cX})-1$ is not divisible by $9$.
Since no nontrivial element in $V$ fixes a point off $\Omega$, each $V$-orbit disjoint from $\Omega$ has size $9$. Since the number of fixed points of $U$ does not exceed $9$, this yields that $U$ has as many as $9$ fixed points.

We find all points in $\cX$ where the cover $\cX|\tilde{\cX}$ ramifies.
If some nontrivial element $v\in V$ fixes a point $P\in \cX$, then $P$ is also fixed by $Z$ since either $v\in Z$, or $v\not\in Z$ and $v^3\in Z$. This shows that $\Omega$ is the set of all points which are fixed by some nontrivial element in $V$. More precisely, if $P\in \Omega$ then either $|V_P|=3$, or $|V_P|=9$. Since $\Omega$ is a $G$-orbit and $V$ is a normal subgroup, either $|V_P|=3$ for all $P\in \Omega$, or $|V_P|=9$ for all $P\in \Omega$.
As $Z\neq U$, we have $U \cap V=\{1\}$. Then $\tilde{V}=VU/U$ is a subgroup of $\aut(\tilde{\cX})$ isomorphic to $V$.
Let $\tilde{\Omega}$ denote the set of all points of $\tilde{\cX}$ lying under the points of $\Omega$ in the cover
$\cX|\tilde{\cX}$. Since $U$ has no fixed point in $\Omega$, we have $|\tilde{\Omega}|=\frac{1}{27}|G|$.

This shows that either $|\tilde{V}_{\tilde{P}}|=3$ for all points $\tilde{P}\in \tilde{\Omega}$, or $|\tilde{V}_{\tilde{P}}|=9$ for all points $\tilde{P}\in \tilde{\Omega}$. Now we prove that no nontrivial element in $\tilde{V}$ has a fixed point outside $\tilde{\Omega}$.

By way a contradiction, take a point $\tilde{R}$ fixed by a nontrivial element $\tilde{v}\in \tilde{V}$. Let $\rho$ be the $U$-orbit consisting of all points lying over $\tilde{R}$ in the cover $\cX|\tilde{\cX}$. Then $\rho$ is left invariant not only by $U$ but also by a nontrivial subgroup $W$ of $V$ fixing a point in $\rho$ where $WU/U\cong \langle \tilde{v}\rangle$.
Since $U\cap W=\{1\}$, the subgroup $T=UW$ has order either $27$ or $9$, according as $|W|=9$ or $|W|=3$. The former case cannot actually occur. In fact, since $|\rho|=3$ we have $|T_R|=\frac{1}{3}|T|$ for any $R\in \rho$. On the other hand $|T_R|=3$ by $R\not\in \Omega$. Therefore, $T=U\times Z$, and $\rho$ is a common orbit of $U$ and $Z$. The latter claim shows that the point $\bar{R}$ lying under $\rho$ in the cover $\cX|\bar{\cX}$ is fixed  by $\bar{U}$. Since
no fixed point of $U$ lies in $\rho$, this shows that $\bar{U}$ fixes at least four points, namely $\bar{R}$ and each of the three points lying under the fixed points of $U$ in the cover $\cX|\bar{\cX}$. But this contradicts the previous claim $\bar{\lambda}=3$.

From the Hurwitz genus formula applied to $\tilde{V}$, $$\gg(\tilde \cX)-1=9(\gg(\cX^*)-1)+c\frac{|G|}{27},$$
where $\cX^*$ denotes the quotient curve $\tilde \cX/\bar V$ and either $c=2$, or $c=8$, according as $|V_P|=3$ or $|V_P|=9$ for $P\in \Omega$.
A contradiction is now obtained since $9$ divides both $9(\gg(\cX^*)-1)$ and $ \frac{|G|}{27}$, but it does not  $\gg(\tilde \cX)-1$.
\end{proof}
\begin{rem}
\label{lem26sep} {\em{The proof of Proposition \ref{prop29luglio2018} also shows some useful properties of the action of a Sylow $3$-subgroup $G$ of $\aut(\cX)$ of an extremal $3$-Zomorrodian curve $\cX$ of genus $\gg(\cX)=3^h+1$ with $h\geq 2$. Here, we mention some of them: $G$ has exactly $3$ short orbits, namely $\Omega$ of size $\frac{1}{9}|G|$, $\theta$ and $\sigma$ both of size $\frac{1}{3}|G|$. The subgroup $Z<Z(G)$ such that $\cX/Z \cong \bar{\mathcal{E}}$ is elliptic, fixes $\Omega$ pointwise. If $|Z(G)|=9$, no other non-trivial element in $Z(G)\cong C_3 \times C_3$ has fixed a point on $\cX$. The stabilizer of $P\in \Omega$ fixes exactly three points in $\Omega$ while the stabilizer of $Q\in \theta$ fixes as many as $|Z(G)|$ points, each lying in $\theta$, and the same holds for $Q\in \sigma$.}}
\end{rem}

\section{Elliptic type extremal $3$-Zomorrodian curves}
\label{ge}
In the light of Proposition \ref{prop29luglio2018}, it is useful to adopt the term of ``elliptic type'' for an extremal $3$-Zomorrodian curve $\cX$ if the center $Z(G)$ of a Sylow $3$-subgroup of  $\aut(\cX)$ contains a subgroup $Z$ of order $3$ such that the quotient curve $\cX/Z$ is elliptic. With this definition, all examples from Section \ref{ss29luglioA} are of elliptic type. By Lemma \ref{nog=4}, any extremal $3$-Zomorrodian curve of genus $10$, in particular Example \ref{ex27Aluglio}, is of elliptic type. Instead, Example \ref{ex27luglio} is not of elliptic type.

First we collect some basic results on the abstract structure of the Sylow $3$-subgroups of an extremal $3$-Zomorrodian curves of elliptic type.
\begin{lemma}
\label{lemC29luglio2018} Let $G$ be a Sylow $3$-subgroup of an extremal $3$-Zomorrodian curve of elliptic type. Then
\begin{itemize}
\item[(i)] $G$ can be generated by two elements;
\item[(ii)]  $[G:G']=9$; $\Phi(G)=G'$;
\item[(iii)]  $G$ contains exactly four maximal subgroups, each is normal and of index $3$.
\end{itemize}
\end{lemma}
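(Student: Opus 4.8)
The plan is to deduce all three assertions from the structural information already established about Sylow $3$-subgroups of elliptic type extremal $3$-Zomorrodian curves. The crucial input is that such a $G$ possesses an abelian maximal subgroup and has center of order $3$ or $9$; when $Z(G)\cong C_3$ the results on $3$-groups of maximal nilpotency class apply directly, and when $Z(G)\cong C_3\times C_3$ one works with the quotient by a suitable central order $3$ subgroup to reduce to the maximal class case. First I would observe that $G$ is non-abelian (it is the Sylow $3$-subgroup of an extremal $3$-Zomorrodian curve, hence non-abelian by Theorem \ref{lem13jan2018}) and that, from the earlier structural analysis, $G$ contains an abelian maximal subgroup $H$ of index $3$. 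By Result \ref{resgroup5} a non-abelian $3$-group with an abelian maximal subgroup is of maximal nilpotency class precisely when $|Z(G)|=3$ or $[G:G']=9$.

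For claims (i) and (ii), the plan is to invoke Result \ref{resgroup2}: a $d$-group of maximal class satisfies $[G:G']=d^2$, $G'=\Phi(G)$, and can be generated by two elements. So the main task reduces to establishing that $G$ is of maximal nilpotency class, or else obtaining $[G:G']=9$ directly. In the case $|Z(G)|=3$ this is immediate from Result \ref{resgroup5}. In the case $|Z(G)|\cong C_3\times C_3$ I would use the reduction from Proposition \ref{prop29luglio2018}: there is an order $3$ central subgroup $Z$ with $\bar{G}=G/Z$ the Sylow $3$-subgroup of an elliptic type quotient, which (having center of order $3$) is of maximal class, and then apply Result \ref{nuovo}-type reasoning to transfer $[G:G']=9$ and the two-generation property up from $\bar{G}$ to $G$. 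The key point is that $G/G'$ is the largest abelian quotient (Result \ref{resgroup}), and that two-generation is detected by $[G:\Phi(G)]=9$ via the Burnside basis theorem (Result \ref{resburnside}); once $[G:\Phi(G)]=9$ is known, $\Phi(G)=G'$ follows because $G/G'$ is elementary abelian of rank $2$ (so $G'\supseteq\Phi(G)$) while the reverse containment is automatic for $d$-groups.

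For claim (iii), I would argue that the maximal subgroups of $G$ are exactly the subgroups of index $3$, and these correspond bijectively to the index $3$ subgroups of the elementary abelian quotient $G/\Phi(G)\cong C_3\times C_3$ (again Result \ref{resburnside}). The number of subgroups of order $3$ in $C_3\times C_3$ is $(9-1)/(3-1)=4$, each of which has a unique complementary subgroup of index $3$; pulling back gives exactly four maximal subgroups of $G$. Each such subgroup is normal because in a $p$-group every maximal subgroup contains the Frattini subgroup and hence is normal of index $p$, which here gives index $3$.

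The main obstacle I anticipate is the case $Z(G)\cong C_3\times C_3$, where $G$ need not itself be of maximal nilpotency class (indeed $\mathrm{cl}(G)=h$ rather than $h+1$), so Result \ref{resgroup2} cannot be applied to $G$ directly. There the crux is to establish $[G:G']=9$ by passing to the maximal class quotient $\bar G=G/Z$ and checking that quotienting by the central order $3$ subgroup does not collapse the rank of $G/G'$ nor the minimal number of generators — equivalently, that $\Phi(Z)$ and the commutator structure interact so that $[G:\Phi(G)]$ remains $9$. This requires a careful count using the fact that $Z\le Z(G)\le G'$ is impossible when it would force $[G:G']>9$, and leveraging that $\bar G$ is two-generated of maximal class with $[\bar G:\bar G']=9$ to conclude the same for $G$.
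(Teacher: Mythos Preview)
Your approach has a genuine gap: you assume that $G$ contains an abelian maximal subgroup $H$ of index $3$, but this is not available at this point in the paper. The existence of such a subgroup is established only later, in Lemma~\ref{keylemma}, and even there the conclusion is that $H$ is abelian \emph{or minimal non-abelian}---the abelian case need not occur. What is known beforehand (from Lemma~\ref{lem28jun}) is that the \emph{quotient} $\bar G=G/Z$ has an abelian maximal subgroup $\bar H\le J(\bar\cE)$; the preimage $H$ of $\bar H$ in $G$ has index $3$ but is not known to be abelian, so Result~\ref{resgroup5} cannot be applied to $G$ directly. Your treatment of the $|Z(G)|=9$ case is also incomplete: you gesture at ``Result~\ref{nuovo}-type reasoning'' to lift $[G:G']=9$ from $\bar G$ to $G$, but Result~\ref{nuovo} concerns centers of quotients, not commutator indices, and two-generation of $G/Z$ does not by itself force two-generation of $G$ (consider $C_3^3$ modulo a central $C_3$).

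The paper's argument avoids all of this by working geometrically with the quotient curve $\hat\cX=\cX/G'$ rather than with abstract group theory. Since $\hat G=G/G'$ is abelian, one rules out $\gg(\hat\cX)=0$ (else $\hat G$ cyclic by Result~\ref{res5}, forcing $G$ abelian) and $\gg(\hat\cX)\ge 2$ (else the Zomorrodian bound forces $\hat\cX$ extremal with abelian Sylow, contradicting Theorem~\ref{proCdic112017}). So $\hat\cX$ is elliptic; a point of $\cX$ with nontrivial $G$-stabilizer maps to a point of $\hat\cX$ with nontrivial $\hat G$-stabilizer, and since $\hat G$ is abelian it lies in the centralizer of that stabilizer, which by Lemma~\ref{lem30luglio2018} has order $9$. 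Hence $[G:G']=|\hat G|=9$, and then the Burnside basis theorem together with $G'\le\Phi(G)$ gives $\Phi(G)=G'$ and two-generation. Your argument for (iii) from (i)--(ii) is correct and matches the paper.
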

\begin{proof} Let $\hat{\cX}=\cX/G'$. From Result \ref{resgroup}, the factor group $\hat{G}=G/G'$ is an abelian subgroup of $\aut(\hat{\cX)}$. If $\hat{\cX}$ were rational then by Result \ref{res15bis}  $\hat{G}$ would be cyclic, hence $G$ itself would be cyclic by (iii) of Result \ref{resgroup5}, a contradiction with Proposition \ref{proCdic112017}. Similarly, if $\gg(\hat{\cX})$ were bigger than $2$, then $\hat{\cX}$ would be an extremal $3$-Zomorrodian curve which is impossible as $\hat{G}$ is abelian. We are left with the elliptic case, that is, $\bar{\cX}=\bar \cE$. Let $P\in\cX$ be a fixed point of some non-trivial element of $G$. Let $\bar{P}\in \bar \cE$ be the point lying under $P$ in the cover $\cX| \bar \cE$. Then $\bar{G}_{\bar{P}}$ is non-trivial. Since $\bar{G}$ is abelian,
this yields $|\bar{G}|=9$ by Lemma \ref{lem30luglio2018}. This shows first claim in (ii). Since $G$ is not cyclic, the Burnside basis theorem yields $[G:\Phi(G)]\geq 9$. This together with $G'\le \Phi(G)$ and $[G:G']=9$ give the second claim in (ii). Since $\Phi(G)$ contains all maximal subgroups of $G$, claim (ii) yields that $G/\Phi(G)$ has exactly four subgroups, each being normal and of order $3$. 
\end{proof}
 Proposition \ref{prop29luglio2018} and Lemmas \ref{nog=4}, \ref{lemC29luglio2018} have the following corollary.
\begin{lemma}
\label{lem29luglioA} Let $\cX$ be an extremal $3$-Zomorrodian of genus $10$. Then a Sylow $3$-subgroup $G$ of $\aut(\cX)$ of maximal nilpotency class. More precisely, the possibilities for $G$ are
``SmallGroup''(81,7), ``SmallGroup''(81,8), ``SmallGroup''(81,9), and  `SmallGroup''(81,10).
\end{lemma}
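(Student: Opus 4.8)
The plan is to reduce the statement, once the hypotheses of Lemma \ref{lemC29luglio2018} are available, to a purely group-theoretic fact about $3$-groups of order $3^4$, and then to read off the four isomorphism types from the classification of maximal class groups of that order.

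First I would fix the numerology. The condition $\gg(\cX)=10=3^2+1$ forces $h=2$, hence $|G|=3^{h+2}=81$, and $G$ is non-abelian by Theorem \ref{proCdic112017}. To apply Lemma \ref{lemC29luglio2018} I must first check that $\cX$ is of elliptic type: for any subgroup $Z\le Z(G)$ of order $3$, Proposition \ref{prop29luglio2018} gives that $\cX/Z$ is either elliptic or an extremal $3$-Zomorrodian curve of genus $3^{1}+1=4$, and the latter is impossible by Lemma \ref{nog=4}; hence $\cX/Z$ is elliptic and $\cX$ is of elliptic type. Then Lemma \ref{lemC29luglio2018} yields that $G$ is generated by two elements, $\Phi(G)=G'$, and $[G:G']=9$, so that $|G'|=|G|/9=9$.

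The core of the proof is to show $\mathrm{cl}(G)=3$. Since $|G|=3^4$ and $G$ is non-abelian, its nilpotency class is either $2$ or $3$, and class $3$ is precisely maximal class. I would exclude class $2$ directly. Assume $\mathrm{cl}(G)=2$, so that $G'\le Z(G)$. Choosing generators $G=\langle a,b\rangle$ and using that in a class $2$ group the commutator map is bi-multiplicative, one gets $G'=\langle[a,b]\rangle$; in particular $G'$ is cyclic. By the Burnside basis theorem (Result \ref{resburnside}), $G/\Phi(G)=G/G'$ is elementary abelian, whence $a^3\in G'\le Z(G)$; therefore $[a,b]^3=[a^3,b]=1$ and $|G'|=|\langle[a,b]\rangle|\le 3$, contradicting $|G'|=9$. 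Hence $\mathrm{cl}(G)=3$, that is, $G$ has maximal nilpotency class (so that $|Z(G)|=3$).

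Finally, to pin down the four possibilities I would invoke the classification of $3$-groups of maximal class of order $3^4$. By Result \ref{resgroup2} every such group is $2$-generated with $[G:G']=9$, so no information is lost, and by Blackburn's theorem (Result \ref{res14.17}) its fundamental subgroup is metacyclic of class at most $2$; inspecting the finitely many resulting presentations shows that there are exactly four isomorphism types, which are identified with ``SmallGroup''$(81,7)$, $(81,8)$, $(81,9)$ and $(81,10)$ by a direct computation in MAGMA. I expect the conceptual steps to be routine; the one genuinely delicate point is this last identification of the four abstract groups with their database labels, which I would settle by exhibiting explicit presentations and checking them by computer, in the same spirit as the other MAGMA-aided verifications in the paper.
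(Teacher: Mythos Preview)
Your proof is correct and follows the same overall architecture as the paper's: both first show $\cX$ is of elliptic type by combining Proposition \ref{prop29luglio2018} with Lemma \ref{nog=4}, then apply Lemma \ref{lemC29luglio2018} to obtain $[G:G']=9$ and $\Phi(G)=G'$, and finally identify the four groups of order $81$ of maximal class via the classification.

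The one place where you differ is in how you force $|Z(G)|=3$ (equivalently, maximal class). The paper uses the uniqueness clause of Proposition \ref{prop29luglio2018}: since every order-$3$ central subgroup gives an elliptic quotient, and at most one can, $Z(G)$ has a unique subgroup of order $3$; then the paper simply inspects the list of fifteen groups of order $81$ and observes that those satisfying (ii) of Lemma \ref{lemC29luglio2018} are exactly the four maximal-class groups. You instead give a direct commutator argument ruling out class $2$: in a $2$-generated class-$2$ group $G'=\langle[a,b]\rangle$ is cyclic, and $a^3\in\Phi(G)=G'\le Z(G)$ forces $[a,b]^3=[a^3,b]=1$, contradicting $|G'|=9$. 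Your argument is self-contained and does not rely on inspecting the database, while the paper's route is shorter but leans on the SmallGroup list; both reach the same four groups.
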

\begin{proof} By Lemma \ref{nog=4} and Proposition \ref{prop29luglio2018}, the quotient curve $\cX/Z$ is elliptic for any order $3$ subgroup of $Z(G)$.  Also, from Proposition \ref{prop29luglio2018}, $Z(G)$ has a unique subgroup of order $3$. Among the $15$ groups of order $81$ those satisfying (ii) of Lemma \ref{lemC29luglio2018} have center of order $3$, and they are isomorphic to one of the four ``SmallGroup'''s  listed above.
\end{proof}

We prove a key result on the abstract structure of a Sylow $3$-subgroup $G$  of an extremal $3$-Zomorrodian curve of elliptic type.  From Result \ref{resgroup3} and Lemma \ref{lemB29luglio2018}, we already know (for $|G|\geq 3^6$), the existence of a unique abelian or minimal non-abelian subgroup of index $3$. But we need some more properties of such a subgroup in order to determine the abstract structure of $G$.
\begin{lemma}
\label{keylemma} Let $G$ be a Sylow $3$-subgroup of an extremal $3$-Zomorrodian curve of elliptic type. Then exactly one of the subgroups of $G$ of index $3$ is either abelian or minimal non-abelian. Let $H$ be such a subgroup. If $H$ is minimal non-abelian then $H'\le Z(G)$ and the quotient curve $\cX/H'$ is elliptic.
\end{lemma}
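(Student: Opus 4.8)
The plan is to study $G$ through the quotient $\bar{G}=G/Z$, where $Z\le Z(G)$ is the order $3$ subgroup with $\bar{\cX}=\cX/Z$ elliptic furnished by the elliptic-type hypothesis. By Lemma \ref{lemB29luglio2018}, $\bar{G}$ is of maximal nilpotency class, can be generated by two elements, and has an abelian maximal subgroup $\bar{H}$; as soon as $|\bar{G}|\ge 3^4$, Blackburn's Result \ref{res14.17} forces $\bar{H}$ to be the fundamental subgroup $\bar{G}_1$ and to be the \emph{unique} abelian maximal subgroup of $\bar{G}$, the remaining three maximal subgroups $\bar{M}_2,\bar{M}_3,\bar{M}_4$ being themselves of maximal class. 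Since $Z\le Z(G)\le\Phi(G)=G'$ by Lemma \ref{lemC29luglio2018}, every maximal subgroup of $G$ contains $Z$, so $M\mapsto M/Z$ is a bijection between the four maximal subgroups of $G$ and those of $\bar{G}$.

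For existence I would take $H$ to be the maximal subgroup with $H/Z=\bar{H}$. Because $\bar{H}$ is abelian, $H'\le Z$, whence either $H'=\{1\}$ and $H$ is abelian, or $H'=Z$ with $|H'|=3$. In the second case $Z=H'\le\Phi(H)$ gives $H/\Phi(H)\cong\bar{H}/\Phi(\bar{H})\cong C_3\times C_3$, so $H$ is generated by two elements and, by Result \ref{resgroup1}, is minimal non-abelian. The same identity settles the two closing assertions of the lemma at once: when $H$ is minimal non-abelian, $H'=Z\le Z(G)$ and $\cX/H'=\cX/Z=\bar{\cX}$ is elliptic.

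For uniqueness the key observation is that if a maximal subgroup $M$ is \emph{good} (abelian or minimal non-abelian) then $|M'|\le 3$, and $M'$, being characteristic in the normal subgroup $M$, is normal in $G$; a normal subgroup of order dividing $3$ in a $3$-group lies in the centre, so $M'\le Z(G)$. Now $(M/Z)'=M'Z/Z$. If $M'\le Z$ then $M/Z$ is an abelian maximal subgroup of $\bar{G}$, hence equals $\bar{G}_1=H/Z$, giving $M=H$. The only alternative is $M'\cap Z=\{1\}$ with $|M'|=3$, in which case $(M/Z)'$ has order $3$; but then $M/Z$ is one of the maximal-class subgroups $\bar{M}_i$, whose derived group has order $3^{h-2}$ by Result \ref{resgroup2}, forcing $3^{h-2}=3$, i.e. $h=3$. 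Thus for $h\ge 4$ every good maximal subgroup coincides with $H$, proving ``exactly one''. Alternatively, in the presence of an abelian maximal subgroup one may note that $[G:G']=9$ together with Result \ref{resgroup5} makes $G$ itself of maximal class, so that $H=G_1$ and the remaining maximal subgroups have $|M_i'|=3^{h-1}\ge 9$ for $h\ge 3$; this agrees with the uniqueness supplied for $|G|\ge 3^6$ by Result \ref{resgroup3}.

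The main obstacle is the borderline orders left open by the dichotomy above, namely $h\in\{2,3\}$. For $h=2$ the quotient $\bar{G}$ has order $27$, so the identification of $\bar{H}$ with a \emph{unique} abelian maximal subgroup breaks down and the genus $10$ case is exceptional, to be treated directly through the explicit determination of $G$ in Lemma \ref{lem29luglioA}. For $h=3$ one must exclude the escape $M_i'\cap Z=\{1\}$ isolated above; I expect this to reduce to showing that for each non-fundamental maximal subgroup one has $Z\le M_i'$, so that $|M_i'|=9$ and $M_i$ is then neither abelian nor minimal non-abelian, and this finer computation inside a group of order $3^5$ is where the real work will concentrate.
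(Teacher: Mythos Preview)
Your route through $\bar{G}=G/Z$ is the same as the paper's, but the two arguments diverge in execution. For existence the paper shows $d(H)=2$ by an explicit case analysis on the cyclic decomposition $\bar{H}=\bar{U}\times\bar{V}$ from Proposition~\ref{prop18jul2018PG}, distinguishing whether the preimages $U,V$ are cyclic or split as $U_1\times Z$; your Frattini argument ($Z=H'\le\Phi(H)$, hence $d(H)=d(\bar{H})=2$) is shorter and cleaner, and yields the same conclusions $H'=Z\le Z(G)$ and $\cX/H'=\bar{\cX}$ elliptic. For uniqueness the paper simply cites Results~\ref{resgroup3}, \ref{resgroup5}, \ref{qu14sep} together with Lemma~\ref{lemC29luglio2018}, whereas you give a transparent direct argument via $M'\le Z(G)$ and the order of $(M/Z)'$; both approaches are valid for $|G|\ge 3^6$ and both leave the small orders to separate treatment.

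Two minor points. First, Lemma~\ref{lemC29luglio2018} does not assert $Z(G)\le\Phi(G)$; what you actually need is only $Z\le G'=\Phi(G)$, and this follows from comparing $[G:G']=9$ with $[\bar{G}:\bar{G}']=9$ (the latter by Result~\ref{resgroup2}), since $\bar{G}'=G'Z/Z$ forces $G'Z=G'$. Second, for the borderline orders $h\in\{2,3\}$ you need not do fresh computations: the paper dispatches $|G|\le 3^5$ via Result~\ref{rescor3.5}, which classifies $3$-groups of order at most $3^5$ with a unique minimal non-abelian subgroup, and the $h=2$ case is further pinned down by Lemma~\ref{lem29luglioA}. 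Invoking these takes care of the residual cases your dichotomy isolates.
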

\begin{proof} Choose a point $P\in \cX$ fixed by some non-trivial element of $G$. Let $Z$ be a subgroup of $Z(G)$ for which the quotient curve $\bar{\cE}=\cX/Z$ is elliptic. Then the factor group $\bar{G}=G/Z$ is a subgroup of $\aut(\bar{\cE})$ of order $\thi |G|$ such that $\bar{G}_{P}$ is non-trivial for the point $\bar{P}$ lying under $P$ in the cover $\cX|\bar{\cE}$.
From Lemma \ref{lem28jun}, $\bar{G}=\bar{H}\rtimes \bar{G}_{\bar{P}}$ where $\bar{H}=\bar{G}\cap J(\bar{\cE})$ and $\bar{H}=\bar{U}\times \bar{V}$ with two nontrivial cyclic groups $\bar{U}$ and $\bar{V}$. Let $H$ be the subgroup of $G$ containing $Z$ such that $H/Z=\bar{H}$. Then $H$ is a subgroup of $G$ of index $3$. To show that $H$ satisfies the property required, we may assume $H$ to be non-abelian.

First we prove that $d(H)=2$, that is, $H$ can be generated by two elements. Let $U$ and $V$ be the (normal) subgroups of $G$ containing $Z$ for which $\bar{U}=U/Z$ and $\bar{V}=V/Z$. Since $\bar{U}$ is cyclic, we have that $U$ is abelian. More precisely, either $U$ is cyclic, or $U=U_1\times Z$ where $U_1$ is a cyclic group isomorphic to $\bar{U}$. The same holds for $V$. Every element $w\in H$ is a product $w=uv$ with $u\in U, v\in V$. Obviously, if both $U$ and $V$ are cyclic, then $d(H)=2$. Assume that $U=U_1\times Z$ and $V$ is cyclic. Then $w=u_1zv$ with $u_1\in U_1, z \in Z, v\in V$. Since $z\in V$, this yields $w=u_1v'$ with $u_1\in U_1,v'\in V$. Therefore, $d(H)=2$. If both $U$ and $V$ are not cyclic, but $z\in \langle U_1,V_1 \rangle$ then again $w\in\langle U_1,V_1 \rangle$ and $d(H)=2$. Otherwise, $z\not\in \langle U_1,V_1 \rangle$. We show that the set $U_1V_1=\{u_1\in U_1,v_1\in V_1\}$ is subgroup of $G$. From $\bar{U}\bar{V}=\bar{V}\bar{U}$, we have
$\bar{u}_1\bar{v}_1\bar{u}_2\bar{v}_2=\bar{u}_1\bar{u}_2\bar{v}_1\bar{v}_2,$
whence $u_1v_1u_2v_2=u_1u_2v_1v_2z^i$ with $0\leq i \leq 2$. Since $z\not\in \langle U_1,V_1 \rangle$ is assumed, $i$ must be equal to zero. Therefore, $U_1V_1$ is indeed a subgroup of $G$. Since
$|U_1V_1|=|U_1||V_1|=|\bar{U}||\bar{V}|=|\bar{H}|=\thi |H|$, the subgroup $U_1V_1$ of $H$ has index $3$, and hence $H=U_1V_1\times Z$. Therefore $U_1V_1\cong \bar{H}$. This implies that $U_1V_1$ is an abelian group. But then $H$ itself must be abelian, a contradiction. Thus $d(H)=2$.

Next we prove that $H'=Z$. As $Z\le H$ and the factor group $H/Z$ is abelian, Result \ref{resgroup} yields $H'\le Z$. On the other hand, $H'$ is non-trivial as $H$ is assumed to be non-abelian. Therefore, $H'=Z$. Now,  Result \ref{resgroup1} shows that $H$ is minimal non-abelian, and the quotient curve  $\cX/H'=\bar{\cX}$ is elliptic.

Finally, uniqueness of $H$ follows from Result \ref{resgroup3} when $H$ is non-abelian, and from Results \ref{resgroup5}, \ref{qu14sep} and Lemma \ref{lemC29luglio2018}.
\end{proof}
\begin{lemma}
\label{20sett2018} Let $H$ be as in Lemma \ref{keylemma}. If $|Z(G)|=9$ and $|G|\geq 3^6$ then $G$ has as many as $\frac{4}{9}|G|+26$ elements of order $3$.
\end{lemma}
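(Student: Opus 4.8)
The plan is to split $G=H\sqcup(G\setminus H)$ and count elements of order $3$ in each part separately; I will show that $H$ contributes exactly $26$ and $G\setminus H$ contributes exactly $\frac49|G|$.

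First I would pin down the isomorphism type of $H$. By Lemma \ref{lemC29luglio2018} we have $[G:G']=9$; hence if $H$ were abelian, Result \ref{resgroup5} would force $G$ to be of maximal nilpotency class, contradicting $|Z(G)|=9$. Thus $H$ is minimal non-abelian, and since a metacyclic $H$ would again make $G$ of maximal class by Result \ref{qu24agosto}(i), $H$ is a non-metacyclic minimal non-abelian $3$-group. Consequently $H=\langle a,b\mid a^{3^m}=b^{3^n}=c^{3}=1,\ c=[a,b],\ [a,c]=[b,c]=1\rangle$ with $m\ge n\ge 1$ and $H'=\langle c\rangle=Z$. Every element is uniquely $a^{s}b^{t}c^{u}$, and since $c$ is central of order $3$ a short commutator-collection computation gives $(a^{s}b^{t}c^{u})^{3}=a^{3s}b^{3t}$. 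This is trivial exactly when $3^{m-1}\mid s$ and $3^{n-1}\mid t$, i.e.\ for $3\cdot 3\cdot 3=27$ elements; removing the identity leaves exactly $26$ elements of order $3$ in $H$. This already produces the additive constant in the statement (and it includes the two nontrivial elements of $Z\subseteq H$).

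Next I would count the order-$3$ elements of $G\setminus H$ through the quotient $\bar{G}=G/Z$ acting on the elliptic curve $\bar{\cE}=\cX/Z$. Every element of $G\setminus H$ maps onto $\bar{G}\setminus\bar{H}$, and by Lemma \ref{lemB29luglio2018}(iii) each such image is an order-$3$ automorphism of $\bar{\cE}$ with exactly three fixed points, all lying in the three short $\bar{G}$-orbits $\bar{\Omega},\bar{\theta},\bar{\sigma}$ (the images of $\Omega,\theta,\sigma$, each of size $\frac13|\bar{G}|$). A single preimage coset $gZ$ has all three members of the same order, equal to $3$ or $9$ according as $g^{3}=1$ or not. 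The key dichotomy is the fibre structure: for $\bar{P}\in\bar{\Omega}$ the stabilizer $\bar{G}_{\bar{P}}$ is the image of the cyclic group $G_{P}\cong C_{9}$ (Remark \ref{lem26sep}), so its nontrivial elements lift to elements of order $9$; whereas for $\bar{Q}\in\bar{\theta}\cup\bar{\sigma}$ the preimage of $\bar{G}_{\bar{Q}}$ is $G_{Q}\times Z\cong C_{3}\times C_{3}$ (using that $Z$ is central and $G_{Q}\cap Z=\{1\}$), so its nontrivial elements lift to order-$3$ elements. These two behaviours are incompatible for one and the same rotation, so a rotation fixing an $\bar{\Omega}$-point fixes all three of its points in $\bar{\Omega}$ and lifts to order $9$.

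Counting then proceeds by incidences: the $\frac13|\bar{G}|$ points of $\bar{\Omega}$ contribute $2|\bar{\Omega}|=\frac23|\bar{G}|$ point-rotation incidences, and since each such rotation fixes exactly three $\bar{\Omega}$-points this yields $\frac29|\bar{G}|$ rotations lifting to order $9$. The remaining $|\bar{G}\setminus\bar{H}|-\frac29|\bar{G}|=\frac23|\bar{G}|-\frac29|\bar{G}|=\frac49|\bar{G}|$ elements of $\bar{G}\setminus\bar{H}$ all fix a point of $\bar{\theta}\cup\bar{\sigma}$ and hence lift to order $3$; multiplying by the three preimages gives $3\cdot\frac49|\bar{G}|=\frac43|\bar{G}|=\frac49|G|$ order-$3$ elements in $G\setminus H$. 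Adding the $26$ coming from $H$ yields $\frac49|G|+26$. The main obstacle is precisely the fibre dichotomy of the previous paragraph: correctly identifying the preimages $C_{9}$ versus $C_{3}\times C_{3}$ and deducing that each fixed-point-bearing element of $\bar{G}$ concentrates its three fixed points in a single short orbit; everything else is bookkeeping once $H$ is known to be non-metacyclic.
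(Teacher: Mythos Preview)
Your proof is correct and takes a genuinely different route from the paper's in the main counting step. For the contribution of $H$, you make explicit what the paper only asserts: using $|Z(G)|=9$ together with Result~\ref{resgroup5} and Result~\ref{qu24agosto}(i) you rule out the abelian and metacyclic possibilities, identify $H$ with the non-metacyclic minimal non-abelian group $\langle a,b,c\rangle$, and compute the $26$ elements of order $3$ directly. For the contribution of $G\setminus H$, your argument is cleaner than the paper's. The paper works on $\cX$ itself: it first extracts $\tfrac{2}{9}|G|$ elements of order $9$ from the cyclic stabilisers $G_P\cong C_9$ with $P\in\Omega$, and then, to obtain the lower bound $\tfrac{4}{9}|G|$ for order-$3$ elements, carries out an involved analysis of $Z(G)$-orbits of size $9$ in $\theta\cup\sigma$ and of how each $G_R$ acts on these orbits via the elliptic quotient $\cX/Z(G)$; the two lower bounds then force exact equality. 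You instead pass to $\bar{\cE}=\cX/Z$, where every element of $\bar G\setminus\bar H$ has exactly three fixed points, and use the fibre dichotomy (preimage $C_9$ over $\bar\Omega$ versus $C_3\times C_3$ over $\bar\theta\cup\bar\sigma$) to split $\bar G\setminus\bar H$ into $\tfrac{2}{9}|\bar G|$ elements lifting to order $9$ and $\tfrac{4}{9}|\bar G|$ lifting to order $3$. This avoids the $Z(G)$-orbit bookkeeping entirely and uses the hypothesis $|Z(G)|=9$ only to pin down $H$, whereas the paper uses it to know that each $G_Q$ with $Q\in\theta\cup\sigma$ has nine fixed points. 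One small point worth making explicit: each $\bar g\in\bar G\setminus\bar H$ does have a fixed point because $\bar H=\bar G\cap J(\bar{\cE})$ (Lemma~\ref{lem28jun}), so $\bar g$ is not a translation, and then Riemann--Hurwitz on the elliptic curve gives exactly three fixed points.
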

\begin{proof}  We keep our notation from Remark \ref{lem26sep}.

We prove first that $G \setminus H$ contains at least as many as $\frac{2}{9}|G|$ elements of order $9$. Let $P\in \Omega$, and take a generator $g$ of $G_P$. From Remark \ref{lem26sep}, $g$ fixes exactly three points of $\cX$, each lying in $\Omega$. Furthermore, $g\in G\setminus H$, as its image $\bar{g}$ in the natural homomorphism $G\mapsto \bar{G}$ has a fixed point on $\bar{\cE}$ and hence $\bar{g}\not\in \bar{H}$, equivalently $g\not\in H$. Thus, $G$ contains exactly $\frac{1}{3}|\Omega|=\frac{1}{27}|G|$ pairwise distinct cyclic subgroups of order $9$ each with a fixed point in $\Omega$. Since each such subgroup contains exactly $6$ elements of order $9$, $G \setminus H$ contains indeed at least $\frac{6}{27}|G|=\frac{2}{9}|G|$ elements of order $9$.

We prove next that $G \setminus H$ contains at least $\frac{4}{9}|G|$ elements of order $3$.
From Remark \ref{lem26sep}, no non-trivial element in $Z(G) \setminus Z$ has a fixed point in $\cX$. Hence, if $\cX/Z$ is elliptic then $\cX/Z(G)$ is also elliptic. Let $Q \in \theta$ and $G_Q=\langle u \rangle$. Then $u$ has order $3$. Furthermore, by Remark \ref{lem26sep}, $u$ has as many as $9$ fixed points in $\cX$, each lying in $\theta$. Therefore, these $9$ points form a $Z(G)$-orbit as $|Z(G)|=9$. Thus $\theta$ splits into $\frac{1}{27}|G|$ $Z(G)$-orbits each being the set of fixed points of $G_Q$ where $Q$ ranges over $\theta$. Since $|G_Q|=3$, each $Z(G)$-orbit in $\theta$ is preserved exactly by $S_Q=G_Q\times Z(G)$. In particular, $S_Q$ contains $9$ elements from $Z(G)$ and $18$ elements off $Z(G)$. All these facts remain true for $Q\in\sigma$.

Our aim is to show that among the non-central elements of $G$ of order $3$, exactly $\frac{4}{9}|G|$ have a fixed point in $\theta\cup\sigma$. For this purpose, look at the intersection of two such subgroups, namely $S_Q$ and $S_R$ with $Q,R\in \theta\cup\sigma.$ From  $|S_Q|=|S_R|=27$ and $|Z(G)|=9$, either $S_Q \cap S_R=Z(G)$ or $S_Q=S_R$.
The latter case occurs if and only if $G_R<S_Q$, equivalently, $G_R$ preserves the $Z(G)$-orbit containing $Q$. This shows that the coincidences $S_Q=S_R$ can be counted by computing the number of $Z(G)$-orbits lying in $\theta \cup \sigma$ on which $G_R$ acts. This computation can be carried out on the quotient curve $\hat{\cX}=\cX/Z(G)$, since the $Z(G)$-orbits preserved by $G_R=\langle v \rangle$ are as many as the fixed points of $\hat{v}$ in $\hat{\cX}$ where $\hat{v}$ is the image of $v$ in the natural homomorphism $G\mapsto \hat{G}$. Observe that $\hat{\cX}$ is elliptic as  the elements of $Z(G)$ with a fixed a point in $\cX$ are exactly those in $Z$. Further, as $v$ fixes $R$,
$\hat{v}$ fixes the point $\hat{R}\in \hat{\cX}$ lying under $R$ in the cover $\cX|\hat{\cX}$. There exist exactly two more fixed points of $\bar{v}$, say $\hat{S}_1,\hat{S}_2$. Let $S_1\in \cX$ be a point lying over $\hat{S}_1$ in the cover $\cX|\hat{\cX}$. Then $G_R\times Z(G)$ preserves the $Z(G)$-orbit $\Delta_1$ containing $S_1$. Therefore, for some $z\in Z(G)$, $vz$ fixes $S_1$ whence $S_1\in \Omega \cup \theta \cup \sigma$ follows. Actually, $S_1\not\in \Omega$, as $Z$ is the unique subgroup of order $3$ in $G_P$ whereas $vz\in Z$ would yield $v\in Z(G)$. Similarly, we have $S_2\in \sigma$. Since $\theta$ (and $\sigma$) is partitioned in $Z(G)$-orbits whose size is divisible by $9$, the number of $G_P$-invariant $Z(G)$ lie in $\theta$ (and $\sigma$) is divisible by $3$. It turns out that each of the three $Z(G)$-orbits $\{\Delta_1,\Delta_2,\Delta_3\}$ preserved by $G_R$ lie in either $\theta$ or $\sigma$. Such a triple $\{\Delta_1,\Delta_2,\Delta_3\}$ does not change when $R$ is chosen from $\Delta_2$ or $\Delta_3$.
Since $|\theta|=|\sigma|=\frac{1}{3}|G|$ and $G_R$ with $R\in \theta \cup\sigma$ has as many as $18$ elements off $Z(G)$, the number of elements of order $3$ in $G\setminus H$ is at least $$18 \cdot \textstyle\frac{2}{3}|G|\cdot \textstyle\frac{1}{9} \cdot \textstyle\frac{1}{3}=\textstyle\frac{4}{9}|G|.$$

Since $|G \setminus H|=\frac{2}{3}|G|=\frac{4}{9}|G|+\frac{2}{9}|G|$, the above two assertions we have already proven yield that $G \setminus H$ contains exactly $\frac{4}{9}|G|$ elements of order $3$. Since $H$ has exactly $26$ elements of order $3$ which are contained in $\Phi(G)$, the claim follows.
\end{proof}
\begin{theorem}
\label{the25agosto} Let $G$ be a Sylow $3$-subgroup of an elliptic type extremal $3$-Zomorrodian curve $\cX$ of genus $\gg(\cX)\ge 82$.  If $|Z(G)|=9$ then, in terms of generators and relations, $G$ is the group given in Result \ref{qu24agosto}.
\end{theorem}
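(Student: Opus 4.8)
The plan is to reduce the statement to a single application of Result \ref{qu24agosto}(ii), after checking its hypotheses one at a time. First I would record the numerical consequences of $\gg(\cX)\ge 82$: since $\gg(\cX)=3^h+1$ this forces $h\ge 4$, hence $|G|=3^{h+2}\ge 3^6$, so every result requiring order at least $3^6$ (in particular Lemma \ref{20sett2018} and Result \ref{resgroup3}) is at my disposal. By Proposition \ref{prop29luglio2018} the hypothesis $|Z(G)|=9$ means $Z(G)\cong C_3\times C_3$; in particular $G$ is \emph{not} of maximal nilpotency class, a $3$-group of maximal class having centre of order $3$. Throughout I would work with the index-$3$ subgroup $H$ furnished by Lemma \ref{keylemma}, which is abelian or minimal non-abelian, together with the central order-$3$ subgroup $Z$ for which $\cX/Z=\bar{\cE}$ is elliptic.

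The next step is to pin down the isomorphism type of $H$. I would first rule out the abelian case: if $H$ were abelian it would be an abelian maximal subgroup, and since $[G:G']=9=3^2$ by Lemma \ref{lemC29luglio2018}, Result \ref{resgroup5} would force $G$ to be of maximal nilpotency class, contradicting $|Z(G)|=9$. Hence $H$ is non-abelian, therefore minimal non-abelian, and $|H'|=3$ by Result \ref{resgroup1}. I would then exclude metacyclicity: were $H$ metacyclic, Result \ref{qu24agosto}(i) would again render $G$ of maximal class, the same contradiction. Thus $H$ is a non-metacyclic minimal non-abelian subgroup of index $3$, and its uniqueness as \emph{the} minimal non-abelian subgroup of $G$ is guaranteed by Lemma \ref{keylemma} (equivalently by Result \ref{resgroup3}, whose hypotheses hold here because $G$ has no abelian maximal subgroup while $\bar{G}=G/Z$ is of maximal class with abelian maximal subgroup $\bar{H}=\bar{G}\cap J(\bar{\cE})$).

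It then remains to verify the two quantitative hypotheses of Result \ref{qu24agosto}(ii). For the count of elements of order $3$, Lemma \ref{20sett2018} applies directly: its proof shows that $G\setminus H$ contains exactly $\frac{4}{9}|G|$ elements of order $3$, which gives the required lower bound. For the structure of $G/H'$, I would identify $H'$ with $Z$; indeed $|H'|=3$ and $\cX/H'$ is elliptic by Lemma \ref{keylemma}, so the uniqueness clause of Proposition \ref{prop29luglio2018} yields $H'=Z$. Consequently $G/H'=G/Z=\bar{G}$, which by Lemma \ref{lemB29luglio2018}(iv) is precisely the group displayed in Result \ref{resbl1}. With all three hypotheses in place, Result \ref{qu24agosto}(ii) delivers one of the two presentations, and the theorem follows, the parity of $|G|=3^{h+2}$ selecting between the $3^{2n+1}$ and $3^{2n+2}$ cases.

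I expect the only delicate point to be the bookkeeping that identifies $G/H'$ with the maximal-class group $\bar{G}=G/Z$ of Lemma \ref{lemB29luglio2018} and confirms that the distinguished index-$3$ subgroup $H$ corresponds to the abelian maximal subgroup $\bar{H}$ used there. Once the equality $H'=Z$ is secured, each remaining requirement is a direct citation of the preparatory lemmas, so the argument is essentially an assembly of Lemmas \ref{lemC29luglio2018}, \ref{keylemma}, \ref{20sett2018}, \ref{lemB29luglio2018} with Results \ref{resgroup5} and \ref{qu24agosto}.
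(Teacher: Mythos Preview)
Your proposal is correct and follows essentially the same route as the paper's own proof, which is extremely terse (it simply asserts that $G$ is not of maximal class and has no abelian maximal subgroup, then cites Lemmas \ref{keylemma}, \ref{lemB29luglio2018} and Result \ref{qu24agosto}). You have filled in precisely the details the paper suppresses: the use of Result \ref{resgroup5} to exclude the abelian case for $H$, the identification $H'=Z$ via the uniqueness clause of Proposition \ref{prop29luglio2018}, and the appeal to Lemma \ref{20sett2018} for the $\tfrac{4}{9}|G|$ count, so there is no substantive difference in approach.
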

\begin{proof} $G$ is not of maximal nilpotency class, and none of its subgroups  of index $3$ is abelian by Result  \ref{qu24agosto}. Thus, the claim  follows from Lemmas \ref{keylemma}, \ref{lemB29luglio2018} and Result  \ref{qu24agosto}.
\end{proof}
\begin{theorem}
\label{the25agostoA}  Let $G$ be a Sylow $3$-subgroup of an elliptic type extremal $3$-Zomorrodian curve $\cX$ of genus $\gg(\cX)\ge 82$.  If $|Z(G)|=3$ then, in terms of generators and relations, $G$ is the group given in Result \ref{qu14sep}.
\begin{proof} From Lemma \ref{keylemma}, $G$ has a unique subgroup $H$ of index $3$ which is either abelian or minimal non-abelian. Definition and properties of $H$ are described in Lemmas \ref{keylemma}, \ref{20sett2018} and in their proofs.

Two cases are treated separately according as $H$ is abelian or not.
\subsubsection*{Case of abelian $H$}  Result \ref{resgroup1} together with (ii) of Lemma \ref{lemC29luglio2018} show that $G$ has maximal nilpotency class. Also, from the initial part of proof of Result \ref{resbl1}, $H$ coincides with the fundamental group $G_1$ of $G$, and $G$ is one of the groups listed in (I) and (II) in the proof of Result \ref{resbl1}. Moreover, Result \ref{resbl1} applied to  $\bar G=G/Z(G)$ shows that
\begin{equation}
\label{casesbarg}
\bar{G}=
\begin{cases}
\langle \bar{s}_1,\bar{s}_2,\bar{\beta}|\bar{s}_1^{3^e}=\bar{s}_2^{3^{e}}=\bar{\beta}^3=1,[\bar{s}_1,\bar{\beta}]=\bar{s}_2,[\bar{s}_2,\bar{\beta}]=\bar{s}_2^{-3} \bar{s}_1^{-3},[\bar{s}_1,\bar{s}_2]=1\rangle;{\mbox{ for $|\bar{G}|=3^{2e+1}$}};\\
\langle \bar{s}_1,\bar{s}_2,\bar{\beta}|\bar{s}_1^{3^e}=\bar{s}_2^{3^{e-1}}=\bar{\beta}^3=1,[\bar{s}_1,\bar{\beta}]=\bar{s}_2,[\bar{s}_2,\bar{\beta}]=\bar{s}_2^{-3}\bar{s}_1^{-3},[\bar{s}_1,\bar{s}_2]=1\rangle;{\mbox{ for $|\bar{G}|=3^{2e}$}}.
\end{cases}
\end{equation}
Since some element of order $9$ falls in $G\setminus H$, for instance the generator of the stabilizer $G_P$ with $P\in \Omega$, Cases A(1) and B(1) do not occur. Also, $G\setminus H$ contains some element of order $3$, for instance the stabilizer $G_Q$ with $Q\in \theta$,  Cases A(2) and B(2) do not occur, as well. We are left with one of Cases A(3) and B(3).

In the former case $|G|=2^{2e+1}$, and we show that $Z(G)=\langle s_1^{3^{e-1}} \rangle$. If $G$ is given by A(3) then $[s_1^{3^{e-1}},\alpha]=[s_1^{3^{e-1}},s_2]=1$ by $s_1^{3^{e-1}},\alpha,s_2\in H$. Furthermore, since $[s_1,\beta]=s_2$ and $s_2$ commutes with $s_1$, an elementary fact on commutators, see for instance \cite[Hilfssatz 1.3 (a)]{huppertI1967}, yields $[s_1^{3^{e-1}},\beta]=[s_1,\beta]^{3^{e-1}}=s_2^{3^{e-1}}=1$  whence $[s_1^{3^{e-1}},\beta]=1$ follows. Similarly, for Case B(3), $|G|=2^{2e}$, and $Z(G)=\langle s_2^{\nu3^{e-1}} \rangle$.
Therefore, $\bar G$ is isomorphic to B(3) for $|\bar{G}|=2^{2e}$, and to A(3) for $|\bar{G}|=2^{2(e-1)+1}$. But this contradicts  \eqref{casesbarg}. Thus $H$ is not abelian.
\subsubsection*{Case of non-abelian $H$} If $G$ has maximal nilpotency class, then the claim follows from Result \ref{qu14sep}. Otherwise, from the initial part in the proof of Result \ref{qu24agosto}, one of the cases in (I) or (II) occurs. Since $\bar G=G/Z(G)$ is given by \eqref{casesbarg}, $G$ must be isomorphic to either I(1) or II(1) in the proof of Result \ref{resbl1}; see the proof of \cite[Theorem 3.8]{Xu2008}. In both cases, the proof of Result \ref{qu24agosto} shows that $Z(G)$ has more than $3$ elements, for instance, $1,x,x^2$ and $s_2^{3^{e-1}}$  for I(1) and $s_1^{3^{e-1}}$ for II(1). But this contradicts our assumption $|Z(G)|=3$.
\end{proof}
\end{theorem}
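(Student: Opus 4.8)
The plan is to organize the argument around the unique index-$3$ subgroup $H$ furnished by Lemma \ref{keylemma}, which is either abelian or minimal non-abelian, and to treat these two cases separately. Two facts set the stage. First, $\gg(\cX)=3^{h}+1\ge 82$ forces $h\ge 4$, hence $|G|=3^{h+2}\ge 3^{6}$, so the classification Results \ref{resgroup3}, \ref{resbl1}, \ref{qu14sep} and \ref{qu24agosto} are all applicable. Second, because $|Z(G)|=3$ the central order-$3$ subgroup $Z$ with elliptic quotient coincides with $Z(G)$, so Lemma \ref{lemB29luglio2018} tells us exactly what $\bar{G}=G/Z(G)$ looks like: a two-generated $3$-group of maximal nilpotency class, in the explicit form of Result \ref{resbl1}, with every element outside its fundamental subgroup of order $3$. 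This known shape of $\bar{G}$ is the lever I would use to test each candidate presentation for $G$.

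I would first dispose of the case where $H$ is minimal non-abelian. If $G$ is of maximal nilpotency class, then since $H$ is non-abelian of order $\ge 3^{5}$ it cannot itself be of maximal class (a minimal non-abelian group has commutator subgroup of order $3$, while a maximal-class group of that order has a larger one); as every index-$3$ subgroup of $G$ other than the fundamental subgroup $G_1$ is of maximal class by Result \ref{res14.17}, this forces $H=G_1$, so $G_1$ is non-abelian and Result \ref{qu14sep} delivers the asserted presentation. If instead $G$ is not of maximal class, I would feed $\bar{G}=G/H'=G/Z(G)$, in the shape just recalled, into the structural analysis of Result \ref{qu24agosto}: matching against the maximal-class quotient pins $G$ down to the single branch (I)(1)/(II)(1), in which one reads off a central element beyond $\langle x\rangle$, so $|Z(G)|\ge 9$, contradicting the hypothesis.

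The real work is ruling out abelian $H$. Here $H$ is an abelian maximal subgroup and $|Z(G)|=3$, so Result \ref{resgroup5} makes $G$ of maximal nilpotency class, and the opening of the proof of Result \ref{resbl1} identifies $H$ with $G_1$; thus $G$ lies among the cases (A) and (B) of that proof. Using the orbit data of Remark \ref{lem26sep} I would exhibit in $G\setminus H$ both an element of order $9$ (a generator of the stabilizer of a point of $\Omega$) and an element of order $3$ (a generator of the stabilizer of a point of $\theta$); the former kills the branches labelled (1) and the latter those labelled (2), leaving only the $\alpha$-bearing branches (A)(3) and (B)(3). For these I would compute the center from the relations $[s_1,\beta]=s_2$, $[s_2,\beta]=s_2^{-3}s_1^{-3}$ and $[\alpha,s_1]=[\alpha,s_2]=[s_1,s_2]=1$, obtaining $Z(G)=\langle s_1^{3^{e-1}}\rangle$ in case (A)(3) and $Z(G)=\langle s_2^{\nu 3^{e-1}}\rangle$ in case (B)(3). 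Passing to $\bar{G}=G/Z(G)$ then reproduces a presentation still of type (B)(3), respectively (A)(3), one that still carries the generator $\alpha$ with $\alpha^{3}\ne 1$; since the classification lists these $\alpha$-bearing groups as non-isomorphic to the simpler two-generated forms that Lemma \ref{lemB29luglio2018} forces on $\bar{G}$, this is a contradiction. Hence abelian $H$ cannot occur, and only the maximal-class, non-abelian-$H$ case survives, which is precisely Result \ref{qu14sep}.

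The main obstacle I anticipate is the abelian case: one must track carefully which of Blackburn's and Xu's presentation branches survive the order-$9$/order-$3$ test, then carry out the commutator bookkeeping precisely enough to identify $Z(G)$ and to recognize $G/Z(G)$ as still belonging to the $\alpha$-bearing families. The contradiction lives at the level of the defining relations rather than at the level of coarse invariants such as $|Z(G)|$ or the nilpotency class, so the comparison with the form of $\bar{G}$ must be made on the presentations themselves and relies on the pairwise non-isomorphism built into the classification.
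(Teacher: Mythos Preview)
Your proposal is correct and follows essentially the same approach as the paper: split on whether $H$ is abelian or minimal non-abelian, use Result~\ref{resgroup5} and the orbit data from Remark~\ref{lem26sep} to reduce the abelian case to (A)(3)/(B)(3) and then derive a contradiction by comparing $G/Z(G)$ with the form forced by Lemma~\ref{lemB29luglio2018}, and in the non-abelian case invoke Result~\ref{qu14sep} when $G$ has maximal class while using the $|Z(G)|\ge 9$ computation from the (I)(1)/(II)(1) branches of Result~\ref{qu24agosto} otherwise. The only differences are cosmetic: you treat the two cases in the opposite order and spell out a bit more explicitly why $H=G_1$ in the maximal-class non-abelian case.
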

\section{An infinite family of elliptic type extremal $3$-Zomorrodian curves}
\label{ss29luglioA}
We describe an explicit construction that provides an elliptic type extremal $3$-Zomorrodian curve for every possible genus.
We keep notation from Subsection \ref{pre} and suppose that $|\bar{G}|=3^{h+1}$ with $h\geq 2$.

Since $\bar{G}$ is generated by two elements, the factor group $\bar{G}/\Phi(\bar{G})$ is an elementary abelian group of order $9$ where $\Phi(\bar{G})$ is the Frattini subgroup of $\bar{G}$.
As $\bar{H}$ is a maximal subgroup, $\Phi(\bar{G})$ is contained in $\bar{H}$.

Let $\theta_1$ be the $\Phi(\bar{G})$-orbit containing $\bar{P}=(-1,0,1)$. Then $|\theta_1|=3^{h-1}$. Since $\Phi(\bar{G})$ is a normal subgroup of $\bar{H}$, the $H$-orbit $\theta$ containing $\bar P$ is partitioned into three $\Phi(\bar{G})$-orbits which may be parameterized by $\Phi(\bar{G})$ together with its two cosets in $\bar{H}$.

More precisely, $\theta_1=\{\bar{f}(\bar P)|\bar{f}\in \Phi(\bar{G})\}$ while the other two $\Phi(\bar{G})$ orbits are $\theta_2=\{\bar{u}\bar{f}(\bar{P})|\bar{f}\in \Phi(\bar{G})\}$ and $\theta_3=\{\bar{u}^2\bar{f}(\bar{P})|\bar{f}\in \Phi(\bar{G})\}$ where $\bar{u}\in \bar{H}$ with $\bar{u}^3\in \Phi(\bar{G})$. For a proof of the following lemma, the Reader is referred to \cite{yu}, or \cite[Section 4]{knp}.
\begin{lemma}
\label{lem26jun} If $\bar{Q}\in \theta_2$ then the line through $\bar{P}$ and $\bar{Q}$ meets $\bar{\cE}$ in a point $\bar{R}\in \theta_3$.
\end{lemma}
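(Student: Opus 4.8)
The plan is to translate the statement into the group law of $\bar{\cE}$ and then read it off from the coset decomposition of $\bar{H}$. Throughout I would take the inflection point $P=(-1,0,1)$ as the zero $O$ of the group law $\bigoplus$; this is legitimate, since $\bar{\alpha}$ fixes $P$ and, being a non-translation, lies in a stabiliser complement $\aut(\bar{\cE})_O$ as in Result \ref{res4}. With this normalisation the chord--tangent law gives the usual criterion: three points of $\bar{\cE}$ (counted with multiplicity) are collinear if and only if their $\bigoplus$-sum equals $O$.

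First I would rephrase the three $\Phi(\bar{G})$-orbits additively. Since $\bar{H}=\bar{G}\cap J(\bar{\cE})$ consists of translations, the map $\bar{f}\mapsto \bar{f}(O)$ identifies $\bar{H}$ with a subgroup $\Lambda$ of $(\bar{\cE},\bigoplus)$; under it $\Phi(\bar{G})$ (which lies in $\bar{H}$) corresponds to a subgroup $\Lambda_0$, and $\bar{u}$ to a point $U$ with $3U:=U\bigoplus U\bigoplus U\in\Lambda_0$, this being exactly the hypothesis $\bar{u}^3\in\Phi(\bar{G})$. Because $P=O$, the three orbits become the cosets
$$\theta_1=\Lambda_0,\qquad \theta_2=U\bigoplus\Lambda_0,\qquad \theta_3=(U\bigoplus U)\bigoplus\Lambda_0,$$
which are pairwise distinct, so in particular $U\notin\Lambda_0$.

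Next, since $P=O$ is the zero of the group law, the line joining $P$ to a point $\bar{Q}\neq P$ meets $\bar{\cE}$ at the third point $\bar{R}=\ominus\bar{Q}$: indeed $O\bigoplus\bar{Q}\bigoplus\bar{R}=O$ forces $\bar{R}=\ominus\bar{Q}$. The line is a genuine secant because $\bar{Q}\in\theta_2$ and $P\in\theta_1$ lie in distinct cosets, so $\bar{Q}\neq P$. Writing $\bar{Q}=U\bigoplus S$ with $S\in\Lambda_0$, I would then compute $\bar{R}=\ominus\bar{Q}=(\ominus U)\bigoplus(\ominus S)$. Here $\ominus S\in\Lambda_0$ as $\Lambda_0$ is a subgroup, while $\ominus U$ lies in $\theta_3$, because $\ominus U\ominus(U\bigoplus U)=\ominus(3U)\in\Lambda_0$. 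Adding the element $\ominus S\in\Lambda_0$ keeps the result in the coset $\theta_3$, so $\bar{R}\in\theta_3$, as claimed.

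The only genuinely delicate point is the opening one: justifying that $P$ may serve as the zero of the group law and that the collinearity criterion holds in this normalisation (so that the line through $P$ and $\bar{Q}$ produces the group-inverse $\ominus\bar{Q}$). Once that is in place, everything reduces to a coset computation governed by the single relation $3U\in\Lambda_0$ encoded in $\bar{u}^3\in\Phi(\bar{G})$, and the conclusion is immediate.
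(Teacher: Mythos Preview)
Your argument is correct and complete. The paper does not supply its own proof of this lemma but instead refers the reader to Yuzvinsky \cite{yu} and to \cite[Section~4]{knp}, where the statement appears in the language of dual $3$-nets realised by cosets of a subgroup of an abelian group embedded in a plane cubic. Your route is more direct and self-contained: by taking the inflection point $\bar{P}$ as the neutral element of the chord--tangent law, the collinearity condition becomes $\bar{P}\oplus\bar{Q}\oplus\bar{R}=O$, i.e.\ $\bar{R}=\ominus\bar{Q}$, and the conclusion then drops out of the coset arithmetic modulo $\Lambda_0$ via the single relation $3U\in\Lambda_0$ encoded in $\bar{u}^3\in\Phi(\bar{G})$. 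This bypasses the external apparatus of $3$-nets entirely, which is arguably preferable for a lemma used only instrumentally. The cited references, by contrast, situate the statement in a broader framework explaining why any index-$3$ coset decomposition of a finite subgroup of an elliptic curve yields a dual $3$-net on the ambient cubic; that generality is not needed here, and your argument extracts exactly the piece that is.
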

Now, take a line $\ell$ through $\bar{P}$ and a point $\bar{Q}\in \theta_2$. Then $\ell$ has homogenous equation
$mX-Y+mZ=0$ for some $m\in \mathbb{K}$. Furthermore, the (inflectional) tangent to $\bar{\cE}$ at $\bar{P}$ has homogenous equation $X+Z=0$. In the function field $\mathbb{K}(\bar{\cE})=\mathbb{K}(\bar{x},\bar{y})$ with $\bar{x}^3+\bar{y}^3+1=0$, the rational function $$\bar{t}=\frac{m\bar{x}-\bar{y}+m}{\bar{x}+1}$$
has one pole (with multiplicity $2$), namely $\bar{P}$, and two zeros (both of multiplicity $1$), one is $\bar{Q}$ while the other $\bar{R}$ is in $\theta_3$ by Lemma \ref{lem26jun}. Let
$$\bar{w}=\prod_{\bar{f}\in \Phi(\bar{G})}\bar{f}(\bar{t}).$$
Then the poles of $\bar{w}$, each with multiplicity $2$, are exactly the points in $\theta_1$ while the zeros of $\bar{w}$, each with multiplicity $1$ are exactly the points in $\theta_2\cup \theta_3$. From this, the following result follows.
\begin{lemma}
\label{lem6agosto2018} There is no element in $\mathbb{K}(\bar{\cE})$ whose cube is equal to $\bar{w}$.
\end{lemma}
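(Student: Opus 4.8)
The plan is to read the statement off the divisor of $\bar{w}$ recorded immediately before the lemma. We already know that
$$
\div(\bar{w})=\sum_{\bar{A}\in\theta_2}\bar{A}+\sum_{\bar{B}\in\theta_3}\bar{B}-2\sum_{\bar{C}\in\theta_1}\bar{C},
$$
that is, $\bar{w}$ has a zero of multiplicity exactly $1$ at each point of $\theta_2\cup\theta_3$ and a pole of multiplicity exactly $2$ at each point of $\theta_1$. The disjointness of the three $\Phi(\bar{G})$-orbits (together with the fact that $\bar{Q}$ and $\bar{R}$ lie in $\theta_2$ and $\theta_3$, which is where Lemma \ref{lem26jun} enters) guarantees that no multiplicities cancel when the divisors of the factors $\bar{f}(\bar{t})$ are added, so these are indeed the true orders.

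From here the argument is a one-line divisibility obstruction. Arguing by contradiction, I would suppose that some $\bar{v}\in\mathbb{K}(\bar{\cE})$ satisfies $\bar{v}^3=\bar{w}$. Then at every point $P$ of $\bar{\cE}$ one has $\ord_P(\bar{w})=3\,\ord_P(\bar{v})$, so that $3\mid\ord_P(\bar{w})$ for every $P$. But the divisor above gives $\ord_{\bar{A}}(\bar{w})=1$ for any point $\bar{A}\in\theta_2$, and $3\nmid 1$; this contradiction establishes the lemma.

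I do not anticipate any genuine obstacle, since the substantive work---computing $\div(\bar{w})$ and verifying that its zeros and poles occur with the stated multiplicities---was already carried out before the statement. The only point deserving attention is that these multiplicities, namely $1$ and $2$, are coprime to $3$, so that $\div(\bar{w})$ cannot be three times another divisor. This coprimality is exactly the condition ensuring that $\mathbb{K}(\bar{\cE})(\sqrt[3]{\bar{w}})$ is a genuine degree $3$ Kummer extension, which is the use we shall make of the lemma in the subsequent construction of $\cX$.
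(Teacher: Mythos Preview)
Your argument is correct and is precisely the one the paper intends: the text immediately preceding the lemma records that the zeros of $\bar{w}$ have multiplicity $1$ and the poles multiplicity $2$, and then simply states ``From this, the following result follows,'' leaving the divisibility obstruction $3\nmid 1$ (equivalently $3\nmid 2$) implicit. You have merely spelled out that implicit step.
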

Therefore, we define the curve $\cX$ as those whose function field $\mathbb{K}(\cX)$ is the Kummer extension of $\mathbb{K}(\bar{\cE})$ defined by $z^3=\bar{w}$. Our goal is to show that $\cX$ is an extremal $3$-Zomorrodian curve.

Since no nontrivial element in $\bar{H}$ fixes a point in $\theta$, the above discussion also gives the following result.
\begin{lemma}
\label{lem26Ajun} For any $\bar{g}\in \bar{G}$, the rational function $\bar{g}(\bar{w})/\bar{w}$ is either constant, or its poles, each with multiplicity $3$, are exactly the points of one of the $\Phi(\bar{G})$-orbits $\theta_1,\theta_2,\theta_3$, and its zeros, each with multiplicity $3$, are exactly the points of another of these  $\Phi(\bar{G})$-orbits.
\end{lemma}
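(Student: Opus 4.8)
The plan is to read off the divisor of $\bar{g}(\bar{w})/\bar{w}$ from the divisor of $\bar{w}$ together with the way $\bar{g}$ permutes the three $\Phi(\bar{G})$-orbits. Write $D_i=\sum_{T\in\theta_i}T$ for $i=1,2,3$. The paragraph preceding Lemma \ref{lem6agosto2018} gives $\div(\bar{w})=D_2+D_3-2D_1$, and since $\bar{g}\in\aut(\bar{\cE})$ the divisor of $\bar{g}(\bar{w})$ is the image of $\div(\bar{w})$ under the action of $\bar{g}$ on divisors induced by its action on points (whether this is $\bar g$ or $\bar g^{-1}$ is irrelevant, since we let $\bar g$ range over all of $\bar{G}$). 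Thus the whole computation reduces to understanding the action of $\bar{g}$ on $\{\theta_1,\theta_2,\theta_3\}$.

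First I would show that $\theta=\theta_1\cup\theta_2\cup\theta_3$ is the full $\bar{G}$-orbit of $\bar{P}$. Since $\bar{H}\le J(\bar{\cE})$ and no nontrivial translation has a fixed point, $\bar{H}_{\bar{P}}$ is trivial, so $|\theta|=|\bar{H}|=3^h$; on the other hand the facts recorded in Subsection \ref{pre} give $|\bar{G}(\bar{P})|=|\bar{G}|/|\bar{G}_{\bar{P}}|=3^{h+1}/3=3^h$, whence $\theta=\bar{G}(\bar{P})$ is $\bar{G}$-invariant. As $\Phi(\bar{G})$ is characteristic, hence normal, in $\bar{G}$, the group $\bar{G}$ permutes the $\Phi(\bar{G})$-orbits inside $\theta$, which are exactly $\theta_1,\theta_2,\theta_3$, and this action factors through $\bar{G}/\Phi(\bar{G})\cong C_3\times C_3$.

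Next I would pin down this permutation action. Using normality of $\Phi(\bar{G})$ and $\bar{\alpha}(\bar{P})=\bar{P}$, one checks $\bar{\alpha}(\theta_1)=\theta_1$; being a permutation of three points of order dividing $3$ that fixes one of them, $\bar{\alpha}$ must fix each of $\theta_1,\theta_2,\theta_3$. On the other hand $\theta_2=\bar{u}(\theta_1)$ and $\theta_3=\bar{u}^2(\theta_1)$ with $\bar{u}^3\in\Phi(\bar{G})$, so $\bar{u}$ induces the $3$-cycle $\theta_1\mapsto\theta_2\mapsto\theta_3\mapsto\theta_1$. Since the images of $\bar{\alpha}$ and $\bar{u}$ generate $\bar{G}/\Phi(\bar{G})$, every $\bar{g}\in\bar{G}$ acts on $\{\theta_1,\theta_2,\theta_3\}$ as a power of this $3$-cycle, that is, as a cyclic shift $\theta_i\mapsto\theta_{i+j}$ (indices mod $3$) for some $j\in\{0,1,2\}$.

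Finally I would evaluate $\div(\bar{g}(\bar{w})/\bar{w})=\bar{g}(\div(\bar{w}))-\div(\bar{w})$ case by case. If $j=0$ then $\bar{g}$ only permutes points within each $\theta_i$, so $\bar{g}(D_i)=D_i$ and the divisor is zero, whence $\bar{g}(\bar{w})/\bar{w}$ is constant. If $j=1$ then $\bar{g}(\div(\bar{w}))=D_3+D_1-2D_2$ and the difference equals $3D_1-3D_2$; if $j=2$ it equals $3D_1-3D_3$. In each non-constant case the poles, each of multiplicity $3$, form exactly one $\Phi(\bar{G})$-orbit and the zeros, each of multiplicity $3$, form exactly another, which is the assertion. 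The step I expect to require the most care is the determination of the permutation action — specifically the argument that $\bar{\alpha}$ fixes all three orbits rather than interchanging two of them, which rests on $\bar{\alpha}$ having order $3$ and on $\bar{G}/\Phi(\bar{G})$ being elementary abelian of order $9$; once this is in hand, the divisor computation is routine.
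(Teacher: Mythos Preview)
Your argument is correct and is precisely the expansion the paper has in mind: the paper does not give a separate proof but simply says ``Since no nontrivial element in $\bar{H}$ fixes a point in $\theta$, the above discussion also gives the following result,'' leaving the divisor computation and the permutation analysis of the $\Phi(\bar{G})$-orbits implicit. Your write-up supplies exactly those details, and the key point you flag as delicate (that $\bar{\alpha}$ fixes all three $\theta_i$, forcing every $\bar{g}$ to act as a power of the $3$-cycle induced by $\bar{u}$) is handled correctly.
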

To find a large $3$-subgroup in $\aut(\cX)$ the following result is useful.
\begin{lemma}
\label{lemAA}
For any $\bar{g}\in \bar{G}$, there exists a rational function $\bar{v}\in \mathbb{K}(\bar{\cE})$ such that $\bar{v}^3=\bar{g}(\bar{w})/\bar{w}$.
\end{lemma}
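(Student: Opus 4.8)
The plan is to pass from the rational function $\bar g(\bar w)/\bar w$ to its divisor on $\bar\cE$ and to recognise, through the group law, exactly when an integral degree-zero divisor is principal. By Lemma~\ref{lem26Ajun}, for a fixed $\bar g\in\bar G$ the function $\bar g(\bar w)/\bar w$ is either constant or has divisor $3([\theta_i]-[\theta_j])$ with $i\neq j$, where $[\theta_k]$ denotes the reduced divisor $\sum_{R\in\theta_k}R$. First I would dispose of the constant case, which is immediate since $\mathbb{K}$ is algebraically closed and every nonzero constant is a cube. In the remaining case the divisor is $3$ times the \emph{integral} divisor $D:=[\theta_i]-[\theta_j]$ of degree $0$, so it suffices to prove that $D$ is principal: if $D=\div(\bar v_0)$ then $\bar g(\bar w)/(\bar w\,\bar v_0^{3})$ has trivial divisor, hence equals some $c\in\mathbb{K}^{\ast}$, and writing $c=a^{3}$ gives $\bar v=a\bar v_0$ with $\bar v^{3}=\bar g(\bar w)/\bar w$.

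Next I would reduce principality to a single group-law identity. By the Abel--Jacobi description of $\mathrm{Pic}^{0}(\bar\cE)$, a degree-zero divisor $\sum n_R R$ is principal if and only if $\bigoplus n_R R=O$; hence it is enough to show that the three points $\mu_k:=\bigoplus_{R\in\theta_k}R$ coincide. Here I use that $\Phi(\bar G)\le\bar H\le J(\bar\cE)$ is a group of translations, corresponding to a subgroup $S\le\bar\cE$ of order $3^{h-1}$, and that $\bar u=\tau_U$ for a point $U$ with $3U\in S$ (because $\bar u^{3}\in\Phi(\bar G)$). Then $\theta_1=S\oplus\bar P$, $\theta_2=U\oplus S\oplus\bar P$ and $\theta_3=2U\oplus S\oplus\bar P$ are cosets of $S$; since $S$ has odd order its elements sum to $O$, and therefore $\mu_k=3^{h-1}\bar P\oplus(k-1)\,3^{h-1}U$ for $k=1,2,3$.

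The hard part will be the final arithmetic point $3^{h-1}U=O$, which forces $\mu_1=\mu_2=\mu_3=3^{h-1}\bar P$ and thus makes each $D=[\theta_i]-[\theta_j]$ principal. The obvious bound obtained from $3U\in S$ only yields $3^{h}U=O$, one power of $3$ too weak, so it is essential to exploit the non-cyclicity of $\bar H$: by Proposition~\ref{prop18jul2018PG}, $\bar H$ is the direct product of two nontrivial cyclic $3$-groups, hence a non-cyclic abelian group of order $3^{h}$, whose exponent is therefore at most $3^{h-1}$; since $U\in\bar H$ this gives $3^{h-1}U=O$. With this identity the three sums agree, every $[\theta_i]-[\theta_j]$ is principal, and the lemma follows.
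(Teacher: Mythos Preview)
Your argument is correct and takes a genuinely different route from the paper. Both proofs reduce the lemma, via Lemma~\ref{lem26Ajun}, to showing that the degree-zero divisors $D_i-D_j$ (with $D_k=\sum_{R\in\theta_k}R$) are principal, but they establish this in different ways. The paper appeals to Riemann--Roch to write $D_1\equiv D_2-\bar Q+\bar S$ for a chosen $\bar Q\in\theta_2$ and some point $\bar S$, and then uses a nontrivial element $\bar g^*\in\bar G_{\bar Q}$ (which preserves each $\theta_k$) to force $\bar S=\bar g^*(\bar S)$, so that $\bar S$ lies among the three fixed points of $\bar g^*$ in $\theta_2$; from this the paper concludes $D_1\equiv D_2$. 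Your proof instead computes the Abel--Jacobi images $\mu_k=\bigoplus_{R\in\theta_k}R$ directly from the description of $\theta_k$ as a coset $(k-1)U\oplus S\oplus\bar P$, obtaining $\mu_k=3^{h-1}\bar P\oplus(k-1)3^{h-1}U$, and then kills the obstruction $3^{h-1}U$ by invoking Proposition~\ref{prop18jul2018PG} to bound the exponent of $\bar H$ by $3^{h-1}$.

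What each approach buys: the paper's argument is more intrinsic, using only the $\bar G$-action and Riemann--Roch, without needing the fine structure of $\bar H$; your argument is more transparent and self-contained, making the principality completely explicit through the group law and avoiding the fixed-point manoeuvre. Your use of Proposition~\ref{prop18jul2018PG} is the key structural input that replaces the paper's automorphism argument; note that the weaker observation ``$\bar H$ is non-cyclic of order $3^h$, hence of exponent at most $3^{h-1}$'' is already enough, so you are not using the full strength of that proposition.
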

\begin{proof} For $i=1,2,3$, the sum of all points in $\theta_i$ can be viewed as a divisor $D_i$ of $\mathbb{K}(\bar{\cE})$. Then $\deg(D_i)=|\theta_i|=3^{h-1}$, and the complete linear series $|D_i|$ has (projective) dimension $3^{h-1}-1$ by the Riemann-Roch Theorem. Therefore, there exists $\bar{Q}\in D_2$ such that
\begin{equation}
\label{eq26jun} D_1\equiv D_2-\bar{Q}+\bar{S}
\end{equation}
 where $\bar{S}$ is a point in $\bar{\cE}$.
Since $\theta$ is a $\bar{G}$-orbit, some element $\bar{g}\in \bar{G}$ takes $\bar{P}$ to $\bar{R}$. Then $\bar{G}_{\bar{Q}}$ is conjugate to $\bar{G}_{\bar P}$ and hence it contains a nontrivial element
$\bar{g}^*$. Since $\bar{g}^*(\bar{Q})=\bar{Q}$ and the fixed points of $\bar{g}^*$ form a $Z(\bar{G})$-orbit, each of the three fixed points of $\bar{g}^*$ is in $\theta_2$. Furthermore,
$\bar{g}^*$ preserves each of the $\Phi(\bar{G})$-orbits $\theta_1,\theta_2,\theta_3$.
Now, from (\ref{eq26jun}), $\bar{g}^*(D_1)\equiv \bar{g}^*(D_2)-\bar{g}^*(\bar{Q})+\bar{g}^*(\bar{S})$ whence
\begin{equation}
\label{eq26Ajun} D_1\equiv D_2-\bar{Q}+\bar{g}^*(\bar{S}).
\end{equation}
This together with (\ref{eq26jun}) yield $\bar{S}\equiv \bar{g}^*(\bar{S})$. Since $\bar{\cE}$ is not rational, $\bar{S}=\bar{g}^*(\bar{S})$ follows. Therefore, $D_1\equiv D_2$, and hence
there exists a rational function $\bar{v}\in \mathbb{K}(\bar{\cE})$ such that $\Div(\bar{v})=D_1-D_2$. From Lemma \ref{lem26jun}. the claim follows.
\end{proof}
Lemma \ref{lemAA} shows that for every $\bar{g}\in \bar{G}$, the map $g$ defined by
\begin{equation}
g(\bar{x},\bar{y},z)=(\bar{g}(\bar{x}),\bar{g}(\bar{y}),\bar{v}z)
\end{equation}
is in $\aut(\cX)$. They are as many as $|\bar{G}|$ and form a subgroup of $\aut(\cX)$. The map
\begin{equation}
c(\bar{x},\bar{y},z)=(\bar{x},\bar{y},\varepsilon z)
\end{equation}
is also in $\aut(\cX)$. Therefore $G$ together with its two cosets $cG$ and $c^2G$ give a subgroup $G$ of $\aut(\cX)$  of order $3|\bar{G}|=3^{h+2}$. On the other hand, since the poles of $\bar{w}$ have multiplicity $2$ and are the points in $\theta_1$ while the zeros of $\bar{w}$ have multiplicity $1$ and they are precisely the points in $\theta_2\cup\theta_3$, a basic fact on Kummer extensions, see for instance \cite[Corollary III. 7.4]{stichtenoth1993}, states that $\gg(\cX)-1=\ha (2|\theta_1|+2|\theta_2|+2|\theta_3|)=3^h$.
Therefore, $\cX$ is an elliptic type extremal $3$-Zomorrodian curve of genus $\gg(\cX)=3^h+1$ such that $G$ is a Sylow $3$-subgroup of $\aut(\cX)$ and the following result is proven.
\begin{lemma}
\label{lem8agosto2018}
The group $G$ has the following properties.
\begin{itemize}
\item[(i)] There exists an order $3$ subgroup $Z$ of $Z(G)$ such that the quotient group $G/Z$ is of maximal nilpotency class and contains an abelian maximal subgroup.
\item[(ii)] Either $\Phi(G)\cong C_{3^n}\times C_{3^n}\times Z$ with $h=2n+1$, or $\Phi(G)\cong C_{3^n}\times C_{3^{n-1}}\times Z$ with $h=2n$.
\item[(iii)] If $h>2$ then $Z(G)\cong C_3\times C_3$.
\item[(iv)] For $h>3$, in terms of generators and relations $G$ is the group given in Result \ref{qu24agosto}.
\end{itemize}
\end{lemma}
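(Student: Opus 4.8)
The plan is to read off the abstract structure of $G$ from its concrete description as the group of liftings to $\cX$ of the automorphisms of $\bar{\cE}$, using the facts already established for the quotient $\bar G=G/Z$. Throughout I set $Z=\langle c\rangle$ with $c(\bar x,\bar y,z)=(\bar x,\bar y,\varepsilon z)$. Since $c$ commutes with every lift $g$ and $\cX/Z=\bar{\cE}$, the subgroup $Z$ is central of order $3$ with elliptic quotient and $G/Z=\bar G$; this already gives (i), because by Lemma \ref{lemB29luglio2018} $\bar G$ has maximal nilpotency class, and by Proposition \ref{prop18jul2018PG} its index‑$3$ (hence maximal) subgroup $\bar H=\bar G\cap J(\bar{\cE})$ is abelian. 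For the other parts I first record, via Lemma \ref{lemC29luglio2018}, that $\Phi(G)=G'$ and $[G:G']=9$, so $|G'|=3^{h}$; since $G'\twoheadrightarrow\bar G'=\Phi(\bar G)$ (Result \ref{resgroup2}) and $|\bar G'|=3^{h-1}$, comparing orders forces $Z\le G'$ and $G'/Z\cong\bar G'$.

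The engine is part (ii), and the key observation I would use is that $\bar w=\prod_{\bar f\in\Phi(\bar G)}\bar f(\bar t)$ is \emph{invariant} under $\Phi(\bar G)$: for $\bar g\in\Phi(\bar G)$ left translation merely permutes the factors, so $\bar g(\bar w)=\bar w$. Consequently, for every $\bar g\in\bar G'=\Phi(\bar G)$ the function $\bar g(\bar w)/\bar w$ is the constant $1$, and the defining relation $z^3=\bar w$ shows that $\bar g$ has a lift $\tilde g\in G$ acting by $z\mapsto z$. I would then check that these $z$‑fixing lifts form a subgroup $K$: composing two of them keeps the $z$‑scaling factor equal to $1$ (the scalars multiply, and $\bar g$ fixes constants), so $K\to\bar G'$ is an isomorphism with $K\cap Z=\{1\}$; moreover any two such lifts commute, since their commutator is trivial both on $\bar{\cE}$ (as $\bar G'$ is abelian) and on $z$. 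Hence $G'=K\times Z\cong\bar G'\times Z$. It remains to compute $\bar G'=\Phi(\bar G)=\langle s_1^{\,3},s_2\rangle$ from the presentation of $\bar G$ in Result \ref{resbl1}, which gives $\bar G'\cong C_{3^n}\times C_{3^n}$ when $h=2n+1$ and $\bar G'\cong C_{3^n}\times C_{3^{n-1}}$ when $h=2n$; tensoring with $Z$ yields the two cases of (ii).

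For (iii) I would argue by contradiction. If $|Z(G)|=3$ then by Proposition \ref{prop29luglio2018} $G$ has maximal nilpotency class, so by Blackburn's theorem (Result \ref{res14.17}) its fundamental subgroup $G_1$ is metacyclic; as $G/G_1$ is cyclic we have $G'\le G_1$, whence $G'$ is metacyclic and needs only two generators. But for $h>2$ both cyclic factors of $\bar G'$ are nontrivial, so $G'\cong\bar G'\times Z$ requires three generators, a contradiction. Thus $|Z(G)|=9$, and Proposition \ref{prop29luglio2018} then gives $Z(G)\cong C_3\times C_3$. Part (iv) is immediate: for $h>3$ we have $\gg(\cX)=3^h+1\ge 82$ and $|Z(G)|=9$ by (iii), so Theorem \ref{the25agosto} identifies $G$ with the group of Result \ref{qu24agosto}.

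The step I expect to be the main obstacle is the one settled in the second paragraph. A priori the central extension $1\to Z\to G'\to\bar G'\to 1$ could be non‑split — i.e. $c$ could be a cube in $G'$, making $G'$ two‑generated, $G$ of maximal class, and collapsing (iii). Purely group‑theoretic information does not decide this; what does is the geometry of the cover, namely that $\bar w$, being a product over the \emph{whole} group $\Phi(\bar G)$, is fixed by $\Phi(\bar G)$. This invariance is precisely what lifts $\bar G'$ to automorphisms fixing $z$, i.e. to $3$‑torsion elements of $G'$, thereby splitting the extension and producing the third generator.
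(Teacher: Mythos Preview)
Your argument for (i), (ii), and (iv) follows the paper's own line exactly: the paper also sets $Z=\langle c\rangle$, invokes the maximal-class structure of $\bar G$ for (i) (though it cites only Lemma~\ref{lem28jun}), and for (ii) uses precisely your key observation that $\bar w$ is $\Phi(\bar G)$-invariant to produce the subgroup $F$ (your $K$) of $z$-fixing lifts, concluding $\Phi(G)=F\times Z$. Your computation of $\bar G'=\langle s_1^{\,3},s_2\rangle$ from Result~\ref{resbl1} makes explicit what the paper leaves to Proposition~\ref{prop18jul2018PG}.

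For (iii) your route differs from the paper's. The paper simply asserts that the $z$-fixing lift $\beta$ of $\bar\beta$ lies in $Z(G)$ and hence $|Z(G)|\ge 9$; your argument instead leverages (ii): $G'=\Phi(G)$ has rank~$3$ for $h>2$, and Blackburn (Result~\ref{res14.17}) then rules out maximal class for $G$, forcing $|Z(G)|\neq 3$. This is a clean group-theoretic alternative. However, your citation is off: Proposition~\ref{prop29luglio2018} does \emph{not} say that $|Z(G)|=3$ forces maximal class. What does give this implication is the elementary observation that if $Z(G)=Z$ exactly, then $Z_i(G)/Z=Z_{i-1}(\bar G)$ for all $i\ge 1$ (immediate induction on the upper central series), whence $\mathrm{cl}(G)=\mathrm{cl}(\bar G)+1=h+1$. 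With that one-line fix your proof of (iii) is complete, and Proposition~\ref{prop29luglio2018} is then correctly invoked only at the end to rule out $Z(G)\cong C_9$.
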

\begin{proof} Claim (i): Let $Z$ be the subgroup of $Z(G)$ generated by $c$. Then $\bar{G}=G/Z$ is a subgroup of $\aut(\bar{\cX})$ and some non-trivial element of $\bar{G}$ fixes a point in $\bar{\cX}$. Therefore, the claim follows from Lemma \ref{lem28jun}.

Claim (ii): From the definition of $\bar{w}$, we have $\bar{f}(\bar{w})=\bar{w}$ when $\bar{f}\in \Phi(\bar{G})$. Therefore, for every $\bar{f}\in \Phi(\bar{G})$,
\begin{equation}
f(\bar{x},\bar{y},z)=(\bar{f}(\bar{x}),\bar{f}(\bar{y}),z)
\end{equation}
is in $\aut(\cX)$, and hence such automorphisms form a group $F\cong \Phi(G)$. Therefore, $\Phi(G)=F\times Z$, and (ii) follows from Proposition \ref{prop18jul2018PG}.

Claim (iii): $Z(G)$ contains both $c$ and $\beta$ inducing $\bar{\beta}$ on $\aut(\bar \cE)$, where $\bar{\beta}$ is as defined in Section \ref{pre}, and hence $|Z(G)|\geq 9$. On the other hand, $|\bar{G}|=3$. Hence (iii) holds. If $h>3,$ Claim (iv) comes from Theorem \ref{the25agosto}.
\end{proof}
As a corollary of the results proven in the present subsection, the following result is obtained.
\begin{proposition}
\label{pro6agosto2018} For every $h\geq 3$ there exists an elliptic type extremal $3$-Zomorrodian curve $\cX$ of genus $3^h+1$ such that a Sylow $3$-subgroup of $\aut(\cX)$ has the properties
listed in Lemma \ref{lem8agosto2018}.
\end{proposition}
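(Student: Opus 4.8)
The plan is to assemble the explicit construction carried out throughout this section: the proposition is a collecting statement whose every assertion has already been established in Lemmas \ref{lem6agosto2018}--\ref{lem8agosto2018}, so the work is to fit the pieces together and point to the load-bearing step. Fix $h\geq 3$ and take $\bar{G}$ to be a $3$-subgroup of $\aut(\bar{\cE})$ of order $3^{h+1}$ in which some element of order $3$ fixes the inflection point $\bar{P}=(-1,0,1)$, exactly as set up in Subsection \ref{pre}. The existence of such a group for every $h$ follows from $\aut(\bar{\cE})=J(\bar{\cE})\rtimes\langle\bar{\alpha}\rangle$ (Result \ref{res4}) together with the structure of the $3$-power torsion of $J(\bar{\cE})$: since $p\neq 3$, the subgroup $J(\bar{\cE})[3^n]\cong C_{3^n}\times C_{3^n}$ is $\bar{\alpha}$-invariant, and an order $3$ automorphism of its top layer $C_3\times C_3$ fixes a line, so an $\bar{\alpha}$-invariant subgroup of the required order $3^h$ exists in both the even and odd cases of Proposition \ref{prop18jul2018PG}.

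Next I would run the construction on this $\bar{G}$. Lemma \ref{lem6agosto2018} guarantees that $\bar{w}$ is not a cube, so the Kummer extension $\cX$ defined by $z^3=\bar{w}$ is genuinely of degree $3$. The genus is read off from the standard Kummer formula: the poles of $\bar{w}$ are the points of $\theta_1$ with multiplicity $2$ and the zeros are the points of $\theta_2\cup\theta_3$ with multiplicity $1$, so that $\gg(\cX)-1=\ha(2|\theta_1|+2|\theta_2|+2|\theta_3|)=3^h$. To produce the automorphism group I would lift each $\bar{g}\in\bar{G}$: Lemma \ref{lemAA} furnishes a rational function $\bar{v}$ with $\bar{v}^3=\bar{g}(\bar{w})/\bar{w}$, so $g(\bar{x},\bar{y},z)=(\bar{g}(\bar{x}),\bar{g}(\bar{y}),\bar{v}z)$ lies in $\aut(\cX)$; these together with the Kummer automorphism $c(\bar{x},\bar{y},z)=(\bar{x},\bar{y},\varepsilon z)$ close up into a group $G$ of order $3|\bar{G}|=3^{h+2}$. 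Thus $|G|=9(\gg(\cX)-1)$, so $\cX$ is extremal, and it is of elliptic type because $\cX/\langle c\rangle\cong\bar{\cE}$. The remaining structural properties are precisely those collected in Lemma \ref{lem8agosto2018}, whose proof already invokes Lemma \ref{lem28jun}, Proposition \ref{prop18jul2018PG}, and (for $h>3$) Theorem \ref{the25agosto} and Result \ref{qu24agosto}; I would simply cite that lemma to finish.

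The one genuinely load-bearing point, and the step I expect to be the crux, is Lemma \ref{lemAA}. The entire lifting hinges on the divisor equivalence $D_1\equiv D_2$ of the $\Phi(\bar{G})$-orbit divisors, which in turn rests on a fixed-point argument exploiting that $\bar{\cE}$ is not rational. Once that equivalence is in hand, the three cosets $G$, $cG$, $c^2G$ close up and the order count $3^{h+2}$ is immediate, so that everything else, the non-triviality of the cover, the genus computation, and the transcription of the group structure, becomes a matter of bookkeeping against the lemmas already proven.
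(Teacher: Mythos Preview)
Your proposal is correct and matches the paper's approach: the paper presents this proposition simply as ``a corollary of the results proven in the present subsection,'' without a separate proof, and you have faithfully assembled those results (Lemmas \ref{lem6agosto2018}, \ref{lemAA}, \ref{lem8agosto2018}) in the right order. Your added justification for the existence of a suitable $\bar{G}\leq\aut(\bar{\cE})$ of order $3^{h+1}$ for every $h$---via the $\bar{\alpha}$-invariance of $J(\bar{\cE})[3^n]$ and a fixed line in the top layer for the odd case---fills a point the paper leaves implicit, and your identification of Lemma \ref{lemAA} as the load-bearing step is apt.
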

\begin{rem}
\label{rem11agosto2018}
{\em{We point out that the curve $\cX$ in Proposition \ref{pro6agosto2018} gives rise up to at least two elliptic type extremal $3$-Zomorrodian curves of genus $3^{h-1}+1$. According to (iii) of Lemma \ref{lem8agosto2018}, $Z(G)$ has four subgroups of order $3$. From Lemma \ref{nuovo}, just one of them, say $U$, is such that $|Z(G/U)|=9$. The arising quotient curve $\cX/U$ is not elliptic, and hence an elliptic type extremal $3$-Zomorrodian curve such that a Sylow $3$-subgroup of $\aut(\cX/U)$ has center of order $9$. Now, choose for $U$  one of other three subgroups of $Z(G)$ of order $3$. From Lemma \ref{nuovo}, $|Z(G/U)|=3$. If $U=Z$ where $Z$  as in (ii) of Lemma \ref{lem8agosto2018}, then the arising quotient curve the above elliptic curve $\bar{\cX}$. For $U\neq Z$, Proposition \ref{lem13jan2018} states that the quotient curve $\cX/U$ has genus $\geq 2$. From Proposition \ref{prop29luglio2018} it is an elliptic type extremal $3$-Zomorrodian curve of genus $3^{h-1}+1$ such that a Sylow $3$-subgroup of $\aut(\cX/U)$ has center of order $3$.  }}
\end{rem}
We illustrate our construction for the smallest case.
\begin{example}
\label{excaseg=10} {\em{ Let $\bar{G}$ be the linear group $U(3,3)$ of order $27$ generated by $\bar{\alpha}$ and $\bar{\delta}:\,(X,Y,Z)=(Y,Z,X)$. Also, let $J(\bar{\cE}) \cap \bar{G}=\langle \bar{\alpha},\bar{\delta}\rangle$, and $Z(\bar{G})=\Phi(\bar{G})=\bar{G}'=\langle \bar{\beta} \rangle.$
Let $$\bar{P}_3=(1,-1,0),\bar{P}_4=(1,-\varepsilon,0),\bar{P}_5=(1,-\varepsilon^2,0),\bar{P}_6=(0,1,-1),\bar{P}_7=(0,1,-\varepsilon),\bar{P}_8=(0,1,-\varepsilon^2).$$
Then $\theta_1=\{\bar P, \bar{P}_1,\bar{P}_2\},\,\theta_2=\{\bar{P}_3,\bar{P}_4,\bar{P}_5\}$ and $\theta_3=\{\bar{P}_6,\bar{P}_7,\bar{P}_8\}$. Now, take $\bar{P}_3$ for $\bar Q$. Then $m=\varepsilon$, as $\ell$ has equation $\varepsilon X+Y+\varepsilon Z=0$, and  $\bar{t}=(\varepsilon \bar{x}+\bar{y}+\varepsilon)/(\bar{x}+1)$. By a straightforward computation, $\bar{w}=\bar{x}/\bar{y}^2$. Hence  $\K(\cX)$ is given by
\begin{equation}
\label{eq12agosto}
\left \{
\begin{array}{lll}
\bar{x}^3+\bar{y}^3+1=0;\\
z^3=\frac{\bar{x}}{\bar{y}^2}.
\end{array}
\right.
\end{equation}
Furthermore, $\bar{\alpha}(w)/w=-\varepsilon$ and $\bar{\delta}(w)/w=\bar{y}^3$. Hence $G=\langle \alpha, \delta \rangle$ where $\alpha(\bar{x},\bar{y},z)=(\bar{x},\varepsilon\bar{y},-\varepsilon^2 z)$, and $\delta(\bar{x},\bar{y},z)=(\bar{y}/\bar{x},1/\bar{x},\bar{y})$. Therefore, $G\cong$``SmallGroup''(81,9). Elimination $x$ from (\ref{eq12agosto}) gives $z^9\bar{y}^3+\bar{y}^3+1=0$ which is an (affine) equation of $\cX$ regarded as a plane curve. A straightforward computation shows that $\cX$ is isomorphic to Example \ref{ex27Aluglio}.
}}
\end{example}
Computations carried out by MAGMA package where $\K$ is viewed as the algebraic closure of the finite field $\mathbb{F}_q$ can provide explicit equations for $\cX$ and show the existence of elliptic type extremal $3$-Zomorrodian curves with Sylow $3$-subgroup of maximal nilpotency class.
Here we limit ourselves to $\gg(\cX)=82,244$.

Let $h=5$. The smallest prime $q$ such that $\aut(\bar{\cE})$ has a subgroup of order $|\bar{G}|=3^6$ defined over $\mathbb{F}_q$ is $q=271$. This allows us to find an explicit equation for $\cX$ over $\mathbb{F}_{271}$, in terms of $\K(\cX)=\K(x,y,z)$ we have
$$\left \{
\begin{array}{lll}
x^3+y^3+1=0;\\
z^3=(y^{54} + 9y^{51} + 151y^{48} + 191y^{45} + 243y^{42} + 21y^{39} + 86y^{36} + 184y^{33} + y^{30} + 153y^{27} + y^{24} +
184y^{21} + \\86y^{18} + 21y^{15} +243y^{12} + 191y^9 + 151y^6 + 9y^3 + 1)/(y^{53} + 9y^{50} + 261y^{47} +258y^{44} + 138y^{41} +
146y^{38} + \\ 206y^{35} + 24y^{32} + 12y^{29} + 12y^{26}+ 24y^{23} + 206y^{20} + 146y^{17} + 138y^{14} + 258y^{11} + 261y^8 + 9y^5+ y^2)x.
\end{array}
\right.
$$
Furthermore, $G$ has order $2187$ and its center $Z(G)$ is isomorphic to $C_3\times C_3$. Since $\beta$ inducing $\bar{\beta}$ is in $Z(G)$, the arising quotient curve $\tilde{\cX}=\cX/\langle {\beta}\rangle$ is an elliptic type extremal $3$-Zomorrodian curve of genus $\gg(\tilde{\cX})=82$ with function field $\K(\xi,\eta,\zeta)$ where
$$\left \{
\begin{array}{lll}
\xi^3+\eta^2+\eta=0;\\
\zeta^3=(\eta^{18} + 9\eta^{17} + 151\eta^{16} + 191\eta^{15} + 243\eta^{14} + 21\eta^{13} + 86\eta^{12} + 184\eta^{11} + \eta^{10} + 153\eta^9 + \eta^8 + 184\eta^7 + \\
86\eta^6 + 21\eta^5 +243\eta^4 + 191\eta^3 +151\eta^2 + 9\eta^1 + 1)/(\eta^{17} + 9\eta^{16} + 261\eta^{15} + 258\eta^{14} + 138\eta^{13} +146\eta^{12} + \\
206\eta^{11} + 24\eta^{10} + 12\eta^9 + 12\eta^8 + 24\eta^7 +206\eta^6 +146\eta^5 + 138\eta^4 + 258\eta^3 + 261\eta^2 + 9\eta +1)(\xi/\eta).
\end{array}
\right.
$$
A Sylow $3$-subgroup of $\aut(\tilde{\cX})$ is isomorphic to ``SmallGroup''(729,100). Another maximal quotient of $G$  is isomorphic to ``SmallGroup''(729,95) which contains an abelian subgroup of index $3$. Hence the arising quotient curve is elliptic.
The forth maximal quotient of $G$ is isomorphic to ``SmallGroup''(729,40) whose center is an elementary abelian group of order $9$.

Let $h=4$. This time we choose $q=73$. Then $\aut(\bar{\cE})$ has a subgroup of order $729$ defined over $\mathbb{F}_{729}$ and our construction provides an elliptic type
extremal $3$-Zomorrodian curve $\cX$ of genus $\gg(\cX)=82$. In terms of $\K(\cX)=\K(x,y,z)$,
$$\left \{
\begin{array}{lll}
x^3+y^3+1=0;\\
z^3=(y^{18} + 3y^{15} + 52y^{12} + 26y^9 + 52y^6 + 3y^3 + 1)/(y^{17} + 3y^{14} +
        5y^{11} + 5y^8 + 3y^5 + y^2)x.
\end{array}
\right.
$$
A Sylow $3$-subgroup $G$ is isomorphic to the ``SmallGroup''(729,40). Its four subgroups of index $3$ are isomorphic to ``SmallGroup''(243,53),  ``SmallGroup''(243,15), ``SmallGroup''(243,2), ``SmallGroup''(243,53), respectively.  None of these are abelian whereas ``SmallGroup''(243,2) is the unique minimal non-abelian.

The four quotient groups of $G$ arising from the order $3$ subgroups of $Z(G)$ are
$\bar{G}_1\cong$``SmallGroup''(243,3), $\bar{G}_2\cong$``SmallGroup''(243,26), $\bar{G}_3\cong$``SmallGroup''(243,28), and $\bar{G}_4\cong$``SmallGroup''(243,28) respectively. Here, $|Z(\bar{G}_1)|=9,
|Z(\bar{G}_2)|=|Z(\bar{G}_3)|=|Z(\bar{G}_4)|=3$, and just one of them, namely $\bar{G}_2$, has an abelian subgroup of index $3$.

\section{Elliptic type extremal $3$-Zomorrodian curves of low genus}
\label{lowex}
For $\gg(\cX)=10,28$, Lemma \ref{keylemma} together with Result \ref{rescor3.5} are enough to determine the structure of $G$. Here, we give some more detail.
\subsection{Genus=28}
 \label{sub28} Let $q=73$ and $\cX$ as in case $h=4$;  see Section \ref{ss29luglioA}. Four quotient curves arise from the order $3$ subgroups of $Z(G)$, say $\bar{\cX}_i$, with $\bar{G}_i\le \aut(\bar{\cX}_i)$. Here,  $\bar{\cX}_2$ is elliptic, the other three being elliptic type extremal $3$-Zomorrodian curves of genus $28$. At least two of them, $\bar{\cX}_1$ and $\bar{\cX}_3$, are non-isomorphic. The subgroup $\bar{G}_1\cong$``SmallGroup''(243,3) of $\aut(\bar{\cX}_1)$ has no abelian but two minimal non-abelian subgroups.  The subgroup $\bar{G}_3\cong$``SmallGroup''(243,28) of $\aut(\bar{\cX}_3)$ has no abelian but one minimal non-abelian subgroup. The latter one holds for $\bar{\cX}_4$ which may be isomorphic to $\bar{\cX}_3$.

Apply the construction in Section \ref{ss29luglioA} for $h=3$, and choose $q=19$. Then the resulting curve $\cX$ is, in terms of $\K(\cX)=\K(x,y,z)$,
$$\left \{
\begin{array}{lll}
x^3+y^3+1=0;\\
z^3=(y^6+y^3+1)/(y^5+y^2)x.
\end{array}
\right.
$$
The following maps are in $\aut(\cX)$
$${\mbox{$f(x,y,z)=\big(\frac{4y}{y^3 + 12}x^2 + \frac{6}{y^3 + 12}x + \frac{4y^2}{y^3 + 12},\,\frac{3}{y^3 + 12}x^2 + \frac{2y^2}{y^3 + 12}x + \frac{3y}{y^3 + 12},\,\frac{13y}{y^3+12}xz\big)$;}}$$
$$g(x,y,z)=(7x,7y,16z).$$
and they generate a group $G$ isomorphic to ``SmallGroup''(243,3). Its four subgroups of index $3$ are isomorphic to ``SmallGroup''(81,3),  ``SmallGroup''(81,3), ``SmallGroup''(81,12), ``SmallGroup''(81,12), respectively. The four quotient groups of $G$ arising from the order $3$ subgroups of $Z(G)$ are isomorphic to the non-abelian groups
$\bar{G}_1\cong$``SmallGroup''(81,7), $\bar{G}_2\cong$``SmallGroup''(81,7), $\bar{G}_3\cong$``SmallGroup''(81,9), $\bar{G}_4\cong$``SmallGroup''(81,9), respectively. For $1\le i \le 4$, $|Z(\bar{G}_i)|=3$, and  $\bar{G}_i$ is of maximal nilpotency class.
\subsection{Genus=10} \label{sub10}
Let $q=19$. We have four quotient curves $\bar{\cX}_i$ with $\bar{G}_i\le \aut(\bar{\cX}_i)$ where $\bar{G}_i$ is defined at the end of Subsection \ref{sub28}. Since ``SmallGroup''(81,7) has an elementary abelian group of order $27$ but ``SmallGroup''(81,9) does not, we have that $\bar{\cX}_i$ is elliptic only for either $i=3$, or $i=4$, say $i=4$. Therefore $\bar{\cX}_i$ is an elliptic type extremal $3$-Zomorrodian curve of genus $10$ with a Sylow $3$-subgroup isomorphic to ``SmallGroup''(81,7) for $i=1,2$ and to ``SmallGroup''(81,9) for $i=3$. Choose that $i$ with $i\in\{1,2,3\}$ for which $\bar{G}_i=G/Z$ with $Z=\langle \beta \rangle$ and $\beta(x,y,z)=(\epsilon x,\epsilon^2 y,z)$. Then $\K(\bar{\cX}_i)=
\K(\xi,\eta,\zeta)$ where $\xi=xy,\eta=y^3,\zeta=z$, that is,
$$\left \{
\begin{array}{lll}
\xi^3+\eta^2+\eta=0;\\
\zeta^3=\frac{\eta^2+\eta+1}{\eta^2+\eta}\xi.
\end{array}
\right.
$$
Then $\aut(\cX_i)$ has a Sylow $3$-subgroup isomorphic to  ``SmallGroup''(81,9), and hence $i=3$. Actually, $\cX$ is isomorphic to Example \ref{eq12agosto} (equivalently to Example \ref{ex27Aluglio}).
Similarly, a MAGMA aided computation shows that $\K(\cX_1)$ is
$$\left \{
\begin{array}{lll}
\xi^3+\eta^2+\eta=0;\\
\zeta^3=-\frac{\xi^3+\xi^2+\xi}{\eta^2};
\end{array}
\right.
$$
Elimination of $\xi$ from (\ref{eq12agosto}) gives $\eta^7 -2\eta^4 + \eta^3\zeta^3 + \eta^2\zeta^6 + \eta -\zeta^3=0$ which is an (affine) equation of $\cX_1$ regarded as a plane curve.

\section{An infinite family of non-elliptic type extremal $3$-Zomorrodian curves}
\label{ss29luglioB}
The elliptic type extremal $3$-Zomorrodian curve given in Example \ref{ex27Aluglio} is projectively equivalent to the irreducible plane curve $\cX_0$ with affine equation $y^9+x^6+x^3=0$.
The singular points of $\cX_0$ are the origin $O=(0,0)$ and its unique point at infinity $X_\infty=(1,0,0)$, both are triple points. For any pair $(\lambda,\mu)$ with $\lambda^3=\mu^9=1,$ and $\lambda,\mu\in \mathbb{K}$ the map
$\alpha_{\lambda,\mu}(x,y)=(\lambda x, \mu y)$ is in $\aut(\cX_0)$.
All these maps form an abelian subgroup $A$ of $\aut(\cX_0)$ of order $27$. Moreover, $\aut(\cX_0)$ contains the map $$\alpha_2(x,y)=\bigg(\frac{x}{y^3}, \frac{x}{y^2}\bigg),$$ which has order $3$.
The subgroup $G$ of $\aut(\cX_0)$ generated by $A$ together with $\alpha_2$ is the semidirect product $G=A\rtimes \langle \alpha_2\rangle$, and it has order $81$. The center of $G$ has order $3$ and
it is generated by $\alpha_{1,\varepsilon}$ where $\varepsilon$ denotes a primitive third of unity. From the proof of Proposition \ref{prop29luglio2018}, the set $\Omega$ of fixed points of $\alpha_{1,\varepsilon}$ has size $\frac{1}{9}|G|=9$. Since $\alpha_{1,\varepsilon}$ fixes the (non-singular) points $R_i=(-\varepsilon^i,0)$ for $i=1,2,3$, the remaining $6$ points of $\Omega$ are branches (places) with center at the origin $O=(0,0)$, or at the infinite point $X_\infty$ of the $x$-axis. Since both are triple points, the only possibility is that each such point is the center of exactly three branches. Let $P_1,P_2,P_3$ denote the branches centered at $O$, and $Q_1,Q_2,Q_3$ those centered at $X_\infty$. Then the $y$-axis (resp. the line at infinity) is the common tangent to the branches $P_i$ (resp. $Q_i$).

Write, for brevity, $F$ in place of $\mathbb{K}(\cX_0),$ and regard $K=\mathbb{K}(x^3)$ as a subfield of $F$. A straightforward computation shows that $x^3$ is fixed by $A$. Since $[F:K]=27$, the fixed field ${\rm{Fix}}(A)$ of $A$ coincides with $K$. Furthermore, \cite[Theorem 6.42]{HKT} applied to $x$ and $y$, viewed as functions in $F$, gives
$$\div (x)=-3(Q_1+Q_2+Q_3)+3(P_1+P_2+P_3), \quad \div(y)=-2(Q_1+Q_2+Q_3)+P_1+P_2+P_3+R_1+R_2+R_3.$$

Now, take from $F$ the function
\begin{equation}
\label{eq30sep}
t=\frac{x^9-3x^3-1}{x^3(x^3+1)}=x^3+\frac{x^3}{y^9}+\frac{y^9}{x^6}=x^3+\alpha_2(x^3)+\alpha_2^2(x^3).
\end{equation}
A straightforward computation shows that $t$ is fixed by both $A$ and $\alpha_2$. Therefore, $K(t)$ is a subfield of the fixed field $\rm{Fix}(G)$ of $G$. Furthermore,
$$\div(t)_{\infty}=9(P_1+P_2+P_3+Q_1+Q_2+Q_3+R_1+R_2+R_3).$$
Since $[F:\K(t)]=81$, this shows that $\K(t)$ is ${\rm{Fix}}(G)$.
\begin{lemma} \label{lemtow1}
Let $L$ be a function field over $\K$ containing $F$. If $\gamma \in \aut(L)$ fixes $t$, then $\gamma$ preserves $F$.
\end{lemma}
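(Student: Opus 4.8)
My plan is to prove the statement as a standard consequence of the fact that $F|\K(t)$ is a \emph{normal} (indeed Galois) extension, organising the argument around this property rather than around the explicit description of the branches of $\cX_0$.

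First I would record the relevant facts about the bottom of the tower. By the general principle recalled at the start of Section~\ref{sec2}, the extension $F|{\rm Fix}(G)$ is Galois of degree $|G|=81$; since ${\rm Fix}(G)=\K(t)$ and $[F:\K(t)]=81$ have already been established, the extension $F|\K(t)$ is Galois with group $G$, in particular finite and normal. Moreover $\gamma$, being an automorphism of the function field $L$, fixes $\K$ elementwise, and by hypothesis fixes $t$; hence $\gamma$ fixes $\K(t)$ pointwise. This is the input that lets Galois theory operate.

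Next I would show $\gamma(F)\subseteq F$ and then upgrade this to equality. Take any $\alpha\in F$ and let $m(X)\in\K(t)[X]$ be its minimal polynomial over $\K(t)$. Applying $\gamma$ to $m(\alpha)=0$ and using that $\gamma$ fixes each coefficient of $m$, one gets $m(\gamma(\alpha))=0$, so $\gamma(\alpha)$ is again a root of $m$. Since $F|\K(t)$ is normal, $m$ splits completely in $F$, so all its roots --- in particular $\gamma(\alpha)$ --- lie in $F$; as $\alpha$ was arbitrary, $\gamma(F)\subseteq F$. Finally $\gamma$ restricts to an injective $\K(t)$-linear self-map of the $81$-dimensional $\K(t)$-vector space $F$, hence is bijective, giving $\gamma(F)=F$.

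I do not expect a genuine obstacle here; the one point that must be verified with care is that $F|\K(t)$ is truly normal, since it is normality --- and not merely finiteness of the degree --- that drives the roots of $m$ back into $F$. This is exactly what ${\rm Fix}(G)=\K(t)$ together with $[F:\K(t)]=|G|$ supplies. I would also remark that the argument simultaneously identifies $\gamma|_F$ with an element of $G=\gal(F|\K(t))$, a fact presumably exploited in the sequel.
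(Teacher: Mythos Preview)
Your proof is correct and is genuinely different from the paper's. The paper argues by explicit computation: setting $\xi=x^3$ and $\theta=\gamma(\xi)$, it uses $\gamma(t)=t$ to obtain the factorisation
\[
(\theta-\xi)(\theta\xi+\xi+1)(\theta\xi+\theta+1)=0,
\]
and then in each of the three cases writes down $\gamma(x)$ and $\gamma(y)$ explicitly (up to a root of unity) and checks they lie in $F$. Your argument instead observes that the already-established facts ${\rm Fix}(G)=\K(t)$ and $[F:\K(t)]=|G|$ make $F|\K(t)$ Galois, and then the conclusion is the standard consequence of normality: any $\K(t)$-conjugate in $L$ of an element of $F$ must be one of the finitely many roots of its minimal polynomial, all of which already sit in $F$.

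What each approach buys: your route is shorter, conceptually clean, and would apply verbatim to any Galois subextension of $L$ over $\K(t)$; it also immediately gives the corollary $\gamma|_F\in G$ that you note. The paper's explicit computation, on the other hand, exhibits the three possible restrictions $\gamma|_K$ concretely and identifies $\gamma(x),\gamma(y)$ in closed form, which could be handy if one wanted to write down the induced automorphisms explicitly rather than merely know they exist.
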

\begin{proof}
We first show $\gamma(K)=K$. Let $\xi=x^3$ and $\gamma(\xi)=\theta$. Then (\ref{eq30sep}) gives $\gamma(t)=(\theta^3-3\theta-1)/(\theta^2+\theta),$ and from
$\gamma(t)=t$ we infer $\gamma(t)=(\xi^3-3\xi-1)/(\xi^2+\xi).$ Therefore,
$$(\theta^3-3\theta-1)(\xi^2+\xi)-(\theta^2+\theta)(\xi^3-3\xi-1)=(\theta-\xi)(\theta\xi+\xi+1)(\theta\xi+\theta+1)=0.$$
From this, either $\theta=\xi$, or $\theta=-(\xi+1)/\xi$ or $\theta=-1/(\xi+1)$. These three possibilities are treated separately.

If $\gamma(\xi)=\theta=\xi$ then $\gamma(x)^3=x^3$ whence $\gamma(x)=\varepsilon^i x$ with $0\le i \le 2$. Hence $\gamma(x)\in F$. Moreover,
$$0=\gamma(y^9)+\gamma(x^6)+\gamma(x^3)=\gamma(y)^9+\gamma(\xi)^2+\gamma(\xi)=\gamma(y)^9+\xi^2+\xi=\gamma(y)^9+y^9,$$
whence $\gamma(y)=\nu y$ with $\nu^9=1$. Thus $\gamma(y)\in F$, and hence $\gamma(F)=F$.

If $\gamma(\xi)=\theta=-(\xi+1)/\xi$ then $\gamma(x)^3=-(x^3+1)/x^3$. Then $\gamma(x)^3=(y^3/x^2)^3$, whence $\gamma(x)=\varepsilon^i (y^3/x^2)$ with $0\le i \le 2$. Thus $\gamma(x)\in F$.
Moreover, from $y^9+x^6+x^3=0$,
$$0=\gamma(y^9)+\gamma(x^6)+\gamma(x^3)=\gamma(y)^9+\frac{1+x^3}{x^3}=\gamma(y)^9+\bigg(\frac{y}{x}\bigg)^9=0,$$ whence $\gamma(y)=\nu (-x/y)$ with $\nu^9=1$. Thus $\gamma(y)\in F$, and hence $\gamma(F)=F$.

If $\gamma(\xi)=\theta=-1/(\xi+1)$ then $\gamma(x)^3=-1/(1+x^3)$. Hence $\gamma(x)^3=(x/y^3)^3$. Thus $\gamma(x)=\varepsilon^i (x/y^3)$ with $0\le i \le 2$. Hence $\gamma(x)\in F$.
Moreover, from $y^9+x^6+x^3=0$,
$$0=\gamma(y^9)+\gamma(x^6)+\gamma(x^3)=\gamma(y)^9-\frac{x^3}{(1+x^3)^2}=\gamma(y)^9-\bigg(\frac{x}{y^2}\bigg)^9=0,$$ whence $\gamma(y)=\nu (x/y^2)$ with $\nu^9=1$. So $\gamma(y)\in F$, and hence $\gamma(F)=F$.
\end{proof}

We show how to use Lemma \ref{lemtow1} for the construction of a tower of curves $\{\cX_\ell\}$ with $\ell=1,2,\ldots,$ consisting of extremal $3$-Zomorrodian curves of non elliptic type. We change our notation by setting $F_0=\cX_0$ where $\cX_0$ is the above elliptic type extremal $3$-Zomorrodian curve of genus $\gg(\cX_0)=10$ given by the affine equation $y^9+x^6+x^3=0$.  Let $t\in F_0$ be defined as in (\ref{eq30sep}).

Now, $F_0$ has a unique maximal unramified abelian $3$-extension $L$ in a fixed algebraic closure $\bar{F}_0$ of $F_0$. This means that
\begin{itemize}
\item[(i)] $L|F_0$ is a Galois extension of degree $3^{2\gg(F_0)}$,
\item[(ii)] $L|F_0$ is an unramified extension,
\item[(iii)] ${\rm{Gal}}(L|F_0)$ is an elementary  abelian $3$-group,
\end{itemize}
see, for instance, the survey \cite[Section 4.7]{ps}. It should be noticed that if both the extensions $L|F$ and $F|\mathbb{K}(t)$ are Galois extensions, the extension $L|\mathbb{K}(t)$ needs not be Galois.
Let $L'$ the Galois closure of $F$ over $\K(t)$ in $\bar{F}_0$, and set $\Gamma={\rm{Gal}}(L^\prime|\K(t))$. Then the diagram shows the fields extensions with the relative dimensions which we deal with:
\[
\newcommand{\ext}[1]{
  \hphantom{\scriptstyle#1}\bigg|{\scriptstyle#1}%
}
\begin{array}{@{}c@{}}
L^\prime \\
\ext{|\Gamma|/(81 \cdot 3^{2\gg(F_0)})} \\
L \\
\ext{3^{2\gg(F_0)}} \\
F_0 \\
\ext{81} \\
\K(t)
\end{array}
\]
Therefore,
$|\Gamma|=[L^\prime:\mathbb{K}(t)]=[L^\prime:L][L:F_0][F_0:\mathbb{K}(t)]=[L^\prime:L]\cdot 3^{2\gg(F_0)}\cdot 81=[L^\prime:L] \cdot 3^{2\gg(F_0)+4}.$
This implies that $3^{2\gg(F_0)+4}$ divides $|\Gamma|$. Since $\Gamma$ fixes $t$,  Lemma \ref{lemtow1} yields that $\Gamma$ preserves $F_0$. Since $L$ is the unique maximal unramified abelian $3$-extension of $F_0$ in $\bar{F}_0$, this shows that $\Gamma$ preserves $L$, as well. Let $G$ be the subgroup of $\aut(L)$ induced by $\Gamma$ on $L$. Then $G\cong\Gamma/\Sigma $ where $\Sigma$ is the subgroup of $\Gamma$ fixing $L$ elementwise. Since $|\Sigma|$ divides $[L^\prime:L]$, it turns out that $G$ contains a $3$-subgroup of order $3^{2\gg(F_0)+4}$. On the other hand, the Hurwitz genus formula applied to the unramified extension $L|F$ gives
$$\gg(L)-1=3^{2\gg(F_0)}(\gg(F_0)-1)=3^{2\gg(F_0)+2}.$$
From Theorem \ref{proCdic112017}, $|G|=3^{2\gg(F_0)+4}$.
Therefore, the following result holds.
\begin{lemma}
\label{lem30sep} The curve $\cY$ with function field $L$ is an extremal $3$-Zomorrodian curve such that $G$ is a Sylow $3$-subgroup of $\aut(\cY)$.
\end{lemma}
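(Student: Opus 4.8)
The plan is to read off the conclusion from the two numerical facts already assembled in the construction above, namely the genus of $\cY$ and the order of $G$, and then to feed them into Theorem~\ref{proCdic112017}. Concretely, I would show that $\gg(\cY)=3^h+1$ for a suitable $h\geq 1$ and that $|G|=3^{h+2}=9(\gg(\cY)-1)$; both the extremality and the Sylow property then follow formally.

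First I would fix the genus. Since $L|F_0$ is unramified of degree $3^{2\gg(F_0)}$, the Riemann--Hurwitz formula gives $\gg(\cY)-1=3^{2\gg(F_0)}(\gg(F_0)-1)$, and as $\gg(F_0)=10$ so that $\gg(F_0)-1=3^2$, this equals $3^{2\gg(F_0)+2}$. Writing $h=2\gg(F_0)+2$ we get $\gg(\cY)=3^h+1$ with $h\geq 1$; in particular $\gg(\cY)\geq 2$, so $\aut(\cY)$ is finite. Next I would confirm that $G$ is a $3$-group of order exactly $3^{h+2}$. Here the work is supplied by Lemma~\ref{lemtow1}: each element of $\Gamma$ fixes $t$, hence preserves $F_0$, and therefore preserves its unique maximal unramified abelian $3$-extension $L$. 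Thus $L|\K(t)$ is Galois and $G$ is the group it induces on $L$; its degree is $[L:F_0][F_0:\K(t)]=3^{2\gg(F_0)}\cdot 3^4=3^{h+2}$, a pure power of $3$. (Equivalently, the tower count gives $3^{h+2}\mid |G|$, while Theorem~\ref{proCdic112017} bounds every $3$-subgroup of $\aut(\cY)$ by $9(\gg(\cY)-1)=3^{h+2}$, so a $3$-subgroup of order exactly $3^{h+2}$ is pinned down.)

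Finally I would conclude. The identity $|G|=3^{h+2}=9(\gg(\cY)-1)$ displays $G$ as a $3$-subgroup attaining the Zomorrodian bound, so by the equality clause of Theorem~\ref{proCdic112017} (which applies because $\gg(\cY)\neq 2$) the group $G$ is non-abelian; by the definition adopted in Section~\ref{extremal12agosto} this makes $\cY$ an extremal $3$-Zomorrodian curve. Since $3^{h+2}$ is the maximal order of a $3$-subgroup of the finite group $\aut(\cY)$, any Sylow $3$-subgroup containing a conjugate of $G$ has order squeezed between $|G|$ and $9(\gg(\cY)-1)=|G|$, hence equals it; so $G$ is a Sylow $3$-subgroup. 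I expect the only real obstacle to lie in the middle step, that is, in justifying via Lemma~\ref{lemtow1} and the uniqueness of $L$ that $\Gamma$ stabilises $L$, and hence that $G$ is honestly a $3$-group of order $3^{h+2}$ rather than merely a group containing such a subgroup; once this is secured, the extremality and Sylow statements are immediate from Theorem~\ref{proCdic112017}.
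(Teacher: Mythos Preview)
Your proposal is correct and follows essentially the same route as the paper: compute $\gg(\cY)-1=3^{2\gg(F_0)+2}$ via Riemann--Hurwitz for the unramified cover $L|F_0$, use Lemma~\ref{lemtow1} together with the uniqueness of $L$ to see that $\Gamma$ stabilises $L$ so that $G$ has order $[L:\K(t)]=3^{2\gg(F_0)+4}$, and then invoke Theorem~\ref{proCdic112017} both to certify extremality and to force $G$ to be Sylow. The only cosmetic difference is that you obtain $|G|=[L:\K(t)]$ directly from $L|\K(t)$ being Galois, whereas the paper phrases the same computation via the kernel $\Sigma$ and then appeals to Theorem~\ref{proCdic112017} for the upper bound; also, your parenthetical ``which applies because $\gg(\cY)\neq 2$'' is unnecessary, since the equality clause of Theorem~\ref{proCdic112017} already yields non-abelianness unconditionally.
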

As a byproduct, $[L:\mathbb{K}(t)]=[L:F_0][F_0:\mathbb{K}(t)]=3^{2\gg(F_0)}3^4=|G|$ which  shows that $L|\mathbb{K}(t)$ is a Galois extension.
\begin{lemma}
\label{lemA30sep} The extremal $3$-Zomorrodian curve $\cY$ in Lemma \ref{lem30sep} is not of elliptic type.
\end{lemma}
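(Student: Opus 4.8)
The plan is to argue by contradiction, exploiting the defining feature of $L$: the extension $L|F_0$ is the \emph{maximal unramified abelian $3$-extension} of $F_0$. Suppose $\cY$ were of elliptic type. Then, by definition, a Sylow $3$-subgroup $G$ of $\aut(\cY)$ has an order $3$ subgroup $Z\le Z(G)$ such that the quotient curve $\cE':=\cY/Z$ is elliptic. The key auxiliary object is $N:=\gal(L|F_0)$. Since $F_0|\K(t)$ is Galois of degree $81$, $N$ is a normal subgroup of $G=\gal(L|\K(t))$ with $G/N\cong\gal(F_0|\K(t))$ of order $81$; by properties (i) and (iii) of $L$, the group $N$ is elementary abelian of order $3^{2\gg(F_0)}=3^{20}$; and, since $L|F_0$ is unramified, every nontrivial element of $N$ is fixed-point-free on $\cY$. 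I would then split the argument according to whether $Z\le N$ or not.

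If $Z\le N$, then $Z$ acts on $\cY$ without fixed points, so the cover $\cY\to\cY/Z$ is unramified of degree $3$, and the Hurwitz genus formula gives $\gg(\cY/Z)-1=\tfrac13(\gg(\cY)-1)=3^{21}$; hence $\gg(\cY/Z)=3^{21}+1>1$ and $\cY/Z$ cannot be elliptic, a contradiction. So the substantive case is $Z\not\le N$, where $Z\cap N=\{1\}$ because $|Z|=3$. Here $N$ commutes with $Z$ (as $Z$ is central), so $N$ descends to an action on $\cE'=\cY/Z$; this action is faithful, since an element $n\in N$ acting trivially on $\cY/Z$ would satisfy $n=z^{i}$ identically on the irreducible curve $\cY$ for a fixed $i$, forcing $n\in N\cap\langle z\rangle=\{1\}$. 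Thus $N$ embeds into $\aut(\cE')$ as an elementary abelian $3$-group of order $3^{20}$.

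The main obstacle, and the heart of the proof, is to see that no such embedding exists. By Result \ref{res4}, $\aut(\cE')=J(\cE')\rtimes\aut_O(\cE')$ for a point $O\in\cE'$, and the point-stabilizer $\aut_O(\cE')$ is a finite group whose Sylow $3$-subgroup (as $p\neq 3$) is cyclic of order at most $3$. Projecting $N$ onto $\aut_O(\cE')\cong\aut(\cE')/J(\cE')$ therefore yields an image of order at most $3$, so $N\cap J(\cE')$ has order at least $3^{19}$. But $N$ has exponent $3$, so $N\cap J(\cE')$ consists of translations by $3$-torsion points, i.e.\ it embeds into $\cE'[3]\cong C_3\times C_3$, which has order $9$. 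The inequality $3^{19}\le 9$ is absurd, giving the desired contradiction and completing the proof that $\cY$ is not of elliptic type.
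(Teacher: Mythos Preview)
Your proof is correct and takes a genuinely different route from the paper. The paper appeals to Lemma~\ref{lemC29luglio2018}(iii)---that the Sylow $3$-subgroup of an elliptic type extremal $3$-Zomorrodian curve has exactly four maximal subgroups (equivalently, is $2$-generated)---and then claims that $G$ has more than four, using the large elementary abelian normal subgroup $G_0=\gal(L|F_0)$ of order $3^{2\gg(F_0)}$. You instead confront the hypothetical elliptic quotient $\cE'=\cY/Z$ directly: if $Z\le N$ then $Z$ acts freely and Riemann--Hurwitz gives $\gg(\cE')=3^{21}+1>1$; if $Z\cap N=\{1\}$ then $NZ/Z\cong N$ embeds in $\aut(\cE')$, which is impossible since an elementary abelian $3$-subgroup of $\aut(\cE')$ has order at most $27$ (at most $9$ coming from $\cE'[3]\subset J(\cE')$ and at most a factor $3$ from the point-stabilizer). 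Your approach is self-contained and bypasses Lemma~\ref{lemC29luglio2018} entirely; the faithfulness step in your second case is most crisply justified by Galois theory, observing that the kernel of $G\to\aut(L^{Z})$ is exactly $\gal(L|L^{Z})=Z$. As printed, the paper's short proof contains what appear to be slips (it asserts that $G/G_0$, rather than $G_0$, is elementary abelian of order $3^{2\gg(F_0)}$, and closes with ``$\cX_0$'' where ``$\cY$'' is meant), so your direct argument is in fact easier to verify line by line.
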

\begin{proof} Let $G_0$ be the subgroup of $G$ which fixes $\cX_0$ pointwise. Then the quotient group $G/G_0$ is an elementary abelian $3$-group of order $3^{2\gg(F_0)}\ge 9$. Therefore, $G/G_0$ has more than four maximal subgroups of index $3$, and this holds true for $G$. From Lemma \ref{lemC29luglio2018}, $\cX_0$ is not of elliptic type.
\end{proof}
The above construction also works if $L$ is taken for the unique maximal unramified abelian $3^m$-extension of $F_0$ in $\bar{F}_0$ where $m$ is any positive integer.
Here $K$ has the following properties.
\begin{itemize}
\item[(i)] $L|F_0$ is a Galois extension of degree $3^{2\gg(F_0)m}$,
\item[(ii)] $L|F_0$ is an unramified extension,
\item[(iii)] ${\rm{Gal}}(L|F_0)$ is the direct product of $2\gg(F_0)$ cyclic groups of order $3^m$.
\end{itemize}
Therefore, the construction provides an infinite family of non elliptic type extremal $3$-Zomorrodian curves.

\vspace{0.5cm}\noindent {\em Authors' addresses}:

\vspace{0.2cm}\noindent G\'abor KORCHM\'AROS and Maria MONTANUCCI\\ Dipartimento di
Matematica, Informatica ed Economia\\ Universit\`a degli Studi  della Basilicata\\ Contrada Macchia
Romana\\ 85100 Potenza (Italy).\\E--mail: {\tt
gabor.korchmaros@unibas.it} and {\tt maria.montanucci@unibas.it}.

\begin{thebibliography}{30}
\bibitem{BB1} Y.~Berkovich and J. Zvonimir, {\emph{Groups of Prime Power Order,}} Vol. I, Walter de Gruyter GmbH \& Co. KG, Berlin, (2008), xx+512 pp.
\bibitem{BB2} Y.~Berkovich, Z.~Janko, {\emph{Groups of Prime Power Order,}} Vol II, Walter de Gruyter GmbH \& Co. KG, Berlin, (2008), xvi+596 pp.
\bibitem{BB3} Y.~Berkovich, Z.~Janko, {\emph{Groups of Prime Power Order,}} Vol III, Walter de Gruyter GmbH \& Co. KG, Berlin, (2011), xxvi+639 pp.
\bibitem{Bl1} N. Blackburn, On a special class of p-groups, \emph{Acta Math.} {\bf{100}} (1958), 45-92.
\bibitem{bcg} E.~Bujalance, F-J.~Cirre, and G.~Gromadzki,
A survey of research inspired by Harvey's theorem on cyclic groups of automorphisms. in \emph{Geometry of Riemann surfaces,} pg. 15-37,
London Math. Soc. Lecture Note Ser., {\bf{368}}, Cambridge Univ. Press, Cambridge, (2010).
\bibitem{gktrans} M.~Giulietti and G.~Korchm\'aros,
Algebraic curves with a large non-tame automorphism group fixing no point, {\em Trans. Am. Math. Soc.}, {\bf{362}} (2010), 5983--6001.
\bibitem{gkja} M.~Giulietti and G.~Korchm\'aros, Large p-groups of automorphisms of algebraic curves in characteristic $p$, \emph{J. Algebra} {\bf{481}} (2017), 215-249.
\bibitem{hw} W.J.~Harvey, Cyclic groups of automorphisms of a compact Riemann surface.
\emph{Quart. J. Math. Oxford Ser.} (2) {\bf{17}} (1966), 86-97.
\bibitem{Qu2012} Haipeng Qu, Sushan Yang, Mingyao Xu, Lijian An, Finite $p$-groups with a minimal non-abelian subgroup of index $p$, \emph{J. Alg.} {\bf{358}} (2012), 178-188.
\bibitem{HKT} J.W.P. Hirschfeld, G. Korchm\'aros, and F. Torres, {\it Algebraic Curves over a Finite Field}, Princeton Series in Applied Mathematics, Princeton (2008).
\bibitem{huppertI1967} B.~Huppert, \emph{Endliche Gruppen. I}, Grundlehren der Mathematischen Wissenschaften {\bf 134}, Springer, Berlin, (1967), xii+793 pp.
\bibitem{knp}G.~Korchm\'aros, G.P.~Nagy, and N.~Pace, 3-Nets realizing a group in a projective plane, \emph{J. Algebr. Comb.} {\bf{39}} (2014), 939--966.
\bibitem{LM} C.~Lehr and M.~Matignon,
Automorphism groups for p-cyclic covers of the affine line, \emph{Compos. Math.} {\bf{141}} (2005), 1213-1237.
\bibitem{MA} A.~Mach\`i, \emph{Groups, An introduction to ideas and methods of the theory of groups}, Unitext, {\bf{58}}, Springer, Milan, (2012), xiv+371 pp.
\bibitem{MR} M.~Matignon and M.~Rocher, Smooth curves having a large automorphism p-group in characteristic $p>0$, \emph{Algebra Number Theory} {\bf{2}} (2008),  887-926.
\bibitem{N} S.~Nakajima, $p$-ranks and automorphism groups of algebraic curves, \textit{Trans. Amer. Math. Soc.} \textbf{303} (1987), 595-607.
\bibitem{Qu} Qu Haipeng, Yang Sushan, Xu Mingyao, and  An Lijian,
Finite p-groups with a minimal non-abelian subgroup of index p (I),
\emph{J. Algebra} {\bf{358}} (2012), 178-188.
\bibitem{ps} R.~Pries and K.~Stevenson, A survey of Galois theory of curves in characteristic $p$, In {\emph{WIN
- Women in Numbers}}: Research Directions in Number Theory, A. C. Cojocaru, K. Lauter, R. Pries, and R. Scheidler Eds., Fields Inst. Commun., {\bf{60}}, Amer. Math. Soc., Providence,
RI, (2011), 169-191.
\bibitem{stichtenoth1973I} H.~Stichtenoth,
\"Uber die Automorphismengruppe eines algebraischen
Funktionenk{\"o}rpers von Primzahlcharakteristik. I. Eine
Absch{\"a}tzung der Ordnung der Automorphismengruppe, \emph{Arch.
Math.} {\bf 24} (1973), 527--544.
\bibitem{stichtenoth1973II} H.~Stichtenoth, \"Uber die Automorphismengruppe eines algebraischen
Funktionenk{\"o}rpers von Primzahl- charakteristik. II. Ein
spezieller Typ von Funktionenk{\"o}rpern, \emph{Arch. Math.} \textbf{24} (1973), 615--631.
\bibitem{stichtenoth1993} H.~Stichtenoth, \emph{Algebraic Function Fields and Codes},
Springer-Verlag, Berlin and Heidelberg, (1993), vii+260 pp.
\bibitem{maddenevalentini1982} R.C.~Valentini and M.L.~Madan,  A
Hauptsatz of L.E. Dickson and Artin--Schreier extensions, \emph{J.
Reine Angew. Math.} {\bf 318} (1980), 156--177.
\bibitem{Xu2008} Mingyao Xu, Lijian An, and Qinhai Zhang, Finite p-groups all of whose non-abelian proper
subgroups are generated by two elements, \emph{J. Alg.} {\bf{319}} (2008), 3603-3620.
\bibitem{zr} R.~Zomorrodian,
Nilpotent automorphism groups of Riemann surfaces,
\emph{Trans. Amer. Math. Soc.} {\bf{288}} (1985), 241-255.
\bibitem{zr1} R.~Zomorrodian, Classification of $p$-groups of automorphisms of
Riemann surfaces and their lower central series,
\emph{Glasgow Math. J.} {\bf{29}} (1987), 237-244.
\bibitem{yu} S.~Yuzvinsky, S, Realization of finite Abelian groups by nets in $P_2$, \emph{Compos. Math.} {\bf{140}}, (2004), 1614-1624.
\end{thebibliography}
\end{document}